\numberwithin{equation}{section}
\newcommand{\N}{\mathbb{N}}
\newcommand{\R}
{\mathbb{R}}
\newcommand{\HH}{\mathcal{H}}
\newcommand{\Leb}{\mathcal{L}}
\newcommand{\ppi}{{\mbox{\boldmath$\pi$}}}
\newcommand{\sfd}{{\sf d}}
\newcommand{\frs}{{\mathfrak s}}
\newcommand{\frq}{{\mathfrak q}}
\renewcommand{\d}{{\mathrm d}}
\newcommand{\restr}[1]{\lower3pt\hbox{$|_{#1}$}}
\newcommand{\eps}{\varepsilon}  
\newcommand{\nchi}{{\raise.3ex\hbox{$\chi$}}}
\newcommand{\limi}{\varliminf}
\newcommand{\lims}{\varlimsup}
\newcommand{\fr}{\penalty-20\null\hfill$\blacksquare$}                      
\newcommand{\e}{{\rm{e}}}                         
 \newcommand{\X}{{\rm X}}
\newcommand{\mm}{\mathfrak m}
\newtheorem{theorem}{Theorem}[section]
\newtheorem{corollary}[theorem]{Corollary}
\newtheorem{lemma}[theorem]{Lemma}
\newtheorem{proposition}[theorem]{Proposition}
\newtheorem{definition}[theorem]{Definition}
\newtheorem{remark}[theorem]{Remark}
\newcounter{Counter}
\def\Xint#1{\mathchoice
{\XXint\displaystyle\textstyle{#1}}%
{\XXint\textstyle\scriptstyle{#1}}%
{\XXint\scriptstyle\scriptscriptstyle{#1}}%
{\XXint\scriptscriptstyle\scriptscriptstyle{#1}}%
\!\int}
\def\XXint#1#2#3{{\setbox0=\hbox{$#1{#2#3}{\int}$ }
\vcenter{\hbox{$#2#3$ }}\kern-.6\wd0}}
\def\dashint{\Xint-}
\newcommand{\mres}{\mathbin{\vrule height 1.6ex depth 0pt width 0.13ex\vrule height 0.13ex depth 0pt width 1.3ex}}
\title{Comparison estimates on nonsmooth spaces with\\ integrable Ricci lower bounds via localization}
\author[]{Emanuele Caputo}
\address{Mathematics Institute, Zeeman Building, University of Warwick, Coventry, CV4 7AL, UK}
\email{\url{emanuele.caputo@warwick.ac.uk}}
\author[]{Francesco Nobili} 
\address{Universit\'a di Pisa, Dipartimento di Matematica, Largo Bruno Pontecorvo 5,
56127 Pisa, IT}
\email{\url{francesco.nobili@dm.unipi.it}}
\author[]{Tommaso Rossi} 
\address{SISSA, Via Bonomea 265, 34136 Trieste, IT} 
\email{\url{trossi@sissa.it}}
\begin{document}

\begin{abstract}
We study comparison estimates on metric measure spaces admitting a synthetic variable Ricci curvature lower bound. We obtain geometric and functional inequalities assuming that the deficit of the lower bound from a given constant is sufficiently integrable. More precisely, we extend to the nonsmooth setting the Bishop-Gromov comparison, the Myers' diameter estimate and the Cheng's comparison principle for Dirichlet eigenvalues. Our analysis relies on the localization method and on one-dimensional comparison estimates for nonsmooth weighted intervals.
\end{abstract}

\maketitle

\tableofcontents

\section{Introduction}
In a smooth Riemannian manifold, a lower bound on the Ricci curvature tensor allows to deduce comparison estimates for many geometric and analytic objects, in terms of the corresponding quantities in a constant-curvature model space. Classical examples of comparison estimates include Bishop-Gromov volume comparison, Myers' maximal diameter theorem, and spectral estimates such as Lichnerowicz's spectral gap and Cheng's comparison for Dirichlet eigenvalues. We refer to the introductory books \cite{CheegerEbin08,Petersen2016} and references therein. Building on these foundational results, two major research directions have emerged:
\begin{itemize}
\item weakening the Ricci curvature lower bound from a uniform to an \emph{integral} bound;
\item extending comparison principles to \emph{nonsmooth} spaces allowing for singularities.
\end{itemize}

In the first direction, notable works \cite{PetersenSprouse98,PetersenWei97,PetersenWei01} study manifolds $(M^d,g)$ where the Ricci curvature lower bound is replaced by an \emph{integral curvature deficit}
\[ 
    \int_M |\min\{ {\rm Ric}_g^- -K,0\}|^p\,\d {\rm Vol}_g,
\]
with ${\rm Ric}_g^-(x)$ denoting the smallest eigenvalue of the Ricci tensor, $K\in\mathbb{R}$, and $p>1$.
When $p>d/2$ and the deficit is finite, or sufficiently small, comparison estimates hold up to error terms depending on this deficit \cite{Gallot88,PetersenWei97,PetersenWei01,Aubry2007,DaiWeiZhang18,WangWei19}.
Another related but different condition, which we will not employ here, is the Kato-type bound on the Ricci tensor \cite{Carron19,Rose19}.

The second direction has seen rapid growth.
A key observation is that the class of manifolds with a uniform Ricci lower bound and a uniform dimension upper bound is precompact in the pointed Gromov-Hausdorff topology \cite{Gromov07}. Limit structures are called Ricci limit spaces and were extensively studied starting from the celebrated work \cite{Cheeger-Colding96}. This analysis paved the way for studying constant curvature lower bounds from an intrinsic viewpoint. This is captured by the curvature-dimension condition (${\sf CD}$ for short), defined via optimal transport, introduced independently in \cite{Sturm06I,Sturm06II} and \cite{Lott-Villani09}. In this setting, comparison and rigidity results have been extensively studied, see e.g.\ \cite{Ketterer13,Ketterer15,DePhilippisGigli15,Rajala12,CavallettiMondino17-Inv,CavallettiMondino17,ErbarSturm21,Mondinosemola20,MR4701367,NobiliViolo22,NobiliViolo24,NobiliViolo25,Nobili24_overview}. 
We also refer to \cite{Villani2016,AmbICM,Caputo2025} for further details.

Lastly, these two lines of research naturally intersect. Indeed, the class of uniformly bounded $d$-manifolds with a sufficiently small uniform upper bound on the integral curvature deficit is precompact in the Gromov-Hausdorff topology, see \cite{PetersenWei97,PetersenWei01}. Hence nonsmooth limits arise in this setting as well. Analogous investigations were carried under Kato-type bounds \cite{CarronMondelloTewodrose24,CarronMondelloTewodrose22}, and related developments appear in metric measure settings \cite{Sturm15,Ket17,Ket21,ErbarRigoniSturmTamanini22,Sturm25}.

\medskip 

\noindent\textbf{Setting and main results}. In this manuscript, we focus on a metric measure space \((\X,\sfd,\mm)\), that is a  complete and separable metric space \((\X,\sfd)\) endowed with a boundedly finite, non-negative reference measure \(\mm\), which admits a \emph{variable} Ricci curvature lower bound. Concretely, we assume that \((\X,\sfd,\mm)\) satisfies the so-called \({\sf CD}(k,N)\) condition (see \cref{def:CDkN}) for some dimensional parameter \(N \in (1,\infty)\), and some locally bounded and lower semicontinuous function
\[
k \colon \X \to \R,
\]
called admissible. This condition has been introduced in \cite{Ket17}, using the language of optimal transport formulation, and generalizes the classical formulations \cite{Sturm06I,Sturm06II,Lott-Villani09} for constant curvature lower bounds. Moreover, as shown in~\cite{Ket17}, it is consistent with the smooth category: a manifold \((M^d,g)\) satisfies \({\sf CD}(k,N)\) if and only if \(d \le N\) and \({\rm Ric}_g \ge k g\) on \(M^d\) and a natural choice is \(k(x) = {\rm Ric}_g^-(x)\).

For \(p \ge 1\), we define the \emph{integral curvature deficit} from a constant \(K \in \mathbb{R}\) of an admissible function \(k\colon \X\to\R\) over a Borel set \(E \subset \X\) by
\begin{equation}\label{eq:defect}
    \rho^k_p(E,K) \coloneqq  \int_E \big| \min\{k-K,0\} \big|^p\,\d\mm .
\end{equation}
Our first main result is a quantitative version of the Bishop-Gromov inequality in the nonsmooth setting. Here \(v_{K,N}(r)\) denotes the volume of a geodesic ball of radius \(r\) in the \(N\)-dimensional model space of constant curvature \(K\) (see \eqref{eq:vKN}), star-shaped refers to sets that are radially generated from a point (see Definition~\ref{def:starshaped}), and \(\theta_N(\cdot)\) denotes the Bishop-Gromov density (see \eqref{eq:BDdensity}).
\begin{theorem}\label{thm:BG}
    For every $K\in \R,N \in (1,\infty), p>N/2$, there exists an explicit non-decreasing positive function $\left(0,\pi\sqrt{\tfrac{N-1}{K^+}}\right)\ni R\mapsto C_{K,N,p}(R)$ (see \eqref{eq:CpKNR}) such that the following holds. Let $(\X,\sfd,\mm)$ be an essentially non-branching ${\sf CD}(k,N)$ for some $k\colon \X\to\R$ admissible. Suppose that $x \in \X$ is such that $\mm(B_\varrho(x))=o(\varrho)$ as $\varrho\downarrow 0$ and let $T\subset \X$ be a star-shaped set at $x$. Then,  for $0<  r \le  R$, it holds
    \begin{equation}
    \label{eq:bishop_gromov_quantitative}
        \left( \frac{\mm(B_R(x)\cap T)}{v_{K,N}(R)} \right)^\frac{1}{2p-1} - \left( \frac{\mm(B_r(x)\cap T)}{v_{K,N}(r)} \right)^\frac{1}{2p-1} \le C_{K,N,p}(R) \rho^k_p(T,K)^\frac{1}{2p-1}. 
    \end{equation}
    Moreover, if $T = B_R(x)$, it holds
    \begin{equation}
    \label{eq:bishop_gromov_quantitative_theta}    
    \mm(B_R(x)) \le \left( \theta_N(x)+ C_{K,N,p}(R) \rho^k_p(B_R(x),K)^\frac{1}{2p-1}  \right)^{2p-1}v_{K,N}(R).
    \end{equation}
    Finally, the function $C_{K,N,p}(R) $ is globally bounded if $K<0$ and satisfies 
    \begin{equation}
        C_{0,N,p}(R)= \left(\frac{N-1}{(2p-1)(2p-N)}\right)^{\frac{p-1}{2p-1}} R^{\frac{2p-N}{2p-1}}.\label{eq:Cp_ZERO_NR}
    \end{equation} 
\end{theorem}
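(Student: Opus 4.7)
The plan is to reduce to a one-dimensional quantitative Bishop-Gromov via the localization method applied to the distance from $x$, and then reassemble by an $L^{2p-1}$-Minkowski step.

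\textbf{Step 1 (Localization from $x$).} I apply the one-dimensional localization (needle decomposition) driven by the $1$-Lipschitz guide function $u(\cdot) = \sfd(x,\cdot)$. Under essentially non-branching ${\sf CD}(k,N)$ with an admissible variable $k$, in the framework of Ketterer, this yields a disintegration $\mm = \int_Q \mm_\alpha\,\d\mathfrak{q}(\alpha)$ along transport rays $\gamma_\alpha\colon[0,L_\alpha]\to\X$ emanating from $x$, with $\mm_\alpha = (\gamma_\alpha)_\#(h_\alpha \Leb^1\mres[0,L_\alpha])$, where each density $h_\alpha$ satisfies the variable one-dimensional ${\sf CD}(k\circ\gamma_\alpha,N)$ condition. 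The hypothesis $\mm(B_\varrho(x))=o(\varrho)$ transfers to the 1D picture as the boundary condition $h_\alpha(0^+)=0$ for $\mathfrak{q}$-a.e.\ $\alpha$. The star-shapedness of $T$ at $x$ forces $T\cap\gamma_\alpha([0,L_\alpha])=\gamma_\alpha([0,t_\alpha])$ for some $t_\alpha\in[0,L_\alpha]$, so with $H_\alpha(t)=\int_0^t h_\alpha$,
\[
  \mm(B_\rho(x)\cap T)=\int_Q H_\alpha(\rho\wedge t_\alpha)\,\d\mathfrak{q}(\alpha),\qquad \rho^k_p(T,K)=\int_Q \int_0^{t_\alpha}|\min(k\circ\gamma_\alpha-K,0)|^p h_\alpha\,\d s\,\d\mathfrak{q}(\alpha).
\]

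\textbf{Step 2 (One-dimensional quantitative estimate).} The analytic core is the following scalar statement on a ray: if $h\colon(0,L)\to(0,\infty)$ satisfies the variable 1D ${\sf CD}(k,N)$ condition with $h(0^+)=0$, and $H(t)=\int_0^t h$, then for $0<r\le R<\pi\sqrt{(N-1)/K^+}$,
\[
  \left(\frac{H(R)}{v_{K,N}(R)}\right)^{\frac{1}{2p-1}} - \left(\frac{H(r)}{v_{K,N}(r)}\right)^{\frac{1}{2p-1}} \le C_{K,N,p}(R)\left(\int_0^R|\min(k-K,0)|^p h\,\d s\right)^{\frac{1}{2p-1}}.
\]
I would follow the Petersen--Wei blueprint adapted to a ray. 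Setting $\phi=h^{1/(N-1)}$, the curvature bound reads $\phi''+\tfrac{k}{N-1}\phi\le 0$ weakly; against the model Jacobi function $s_K$ (satisfying $s_K''+\tfrac{K}{N-1}s_K=0$), the Wronskian $W=\phi' s_K-\phi s_K'$ satisfies $W'\le\tfrac{|\min(k-K,0)|}{N-1}\phi\,s_K$. The vanishing $W(0^+)=0$ integrates this to a pointwise control on $(\phi/s_K)'=W/s_K^2$; a Gronwall-type step combined with a H\"older split in exponent $p$ --- admissible precisely because $p>N/2$ makes the relevant dual model weight integrable near the origin --- transfers this to the integrated ratio $H/v_{K,N}$ and produces the exponent $1/(2p-1)$. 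In the flat case $K=0$ the weight integral reduces to $\int_0^R t^\beta\,\d t$ for a suitable $\beta>-1$, yielding \eqref{eq:Cp_ZERO_NR} by direct computation; for $K\ne 0$ the constant is obtained from the analogous integral involving $s_K$, globally bounded for $K<0$ and blowing up only at the admissible-radius endpoint for $K>0$.

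\textbf{Step 3 (Assembly and density limit).} On each ray, Step 2 gives $a_\alpha^{1/(2p-1)}\le b_\alpha^{1/(2p-1)}+C_{K,N,p}(R)\,d_\alpha^{1/(2p-1)}$ for $\mathfrak{q}$-a.e.\ $\alpha$, with $a_\alpha,b_\alpha,d_\alpha$ the 1D integrands appearing in Step 1. In the range $2p-1\ge 1$ (i.e.\ $p\ge 1$, which holds whenever $N\ge 2$), Minkowski's inequality in $L^{2p-1}(\mathfrak{q})$ integrates this pointwise estimate to \eqref{eq:bishop_gromov_quantitative} directly; in the complementary sub-range $N\in(1,2)$ with $p\in(N/2,1)$, one uses the sub-additivity $(x+y)^{2p-1}\le x^{2p-1}+y^{2p-1}$ (valid since $2p-1<1$) and absorbs a numerical factor into $C_{K,N,p}(R)$. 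Finally \eqref{eq:bishop_gromov_quantitative_theta} follows from \eqref{eq:bishop_gromov_quantitative} by specializing $T=B_R(x)$ and sending $r\downarrow 0$: the hypothesis $\mm(B_\varrho(x))=o(\varrho)$ combined with $v_{K,N}(\varrho)\asymp\varrho^N$ as $\varrho\to 0^+$ identifies the limit of $\mm(B_r(x))/v_{K,N}(r)$ with $\theta_N(x)$.

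\textbf{Main obstacle.} The principal technical difficulty lies in Step 2. A weak formulation of the variable 1D ${\sf CD}(k,N)$ condition must be robust enough to enable the Wronskian manipulation under a merely lower-semicontinuous $k$; the boundary behavior $W(0^+)=0$ must be genuinely verified $\mathfrak{q}$-a.e.\ (and this is exactly where the normalization $\mm(B_\varrho(x))=o(\varrho)$ enters the one-dimensional picture through $h_\alpha(0^+)=0$); and the resulting constants must be assembled into a single non-decreasing function $C_{K,N,p}(R)$ with the stated uniformity for $K<0$ and correct divergence at $\pi\sqrt{(N-1)/K^+}$ for $K>0$.
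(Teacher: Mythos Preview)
Your overall architecture is sound, and Step~1 (localization from $x$) matches the paper exactly. The assembly in Step~3 via Minkowski's inequality in $L^{2p-1}(\mathfrak q)$ is correct: from the pointwise bound $a_\alpha^{1/(2p-1)}\le b_\alpha^{1/(2p-1)}+C\,d_\alpha^{1/(2p-1)}$ with $a_\alpha,b_\alpha,d_\alpha\ge 0$, taking $L^{2p-1}$-norms indeed yields $\big(\int a\big)^{1/(2p-1)}\le\big(\int b\big)^{1/(2p-1)}+C\big(\int d\big)^{1/(2p-1)}$, which is precisely \eqref{eq:bishop_gromov_quantitative}. Interestingly, the paper explicitly remarks that the one-dimensional inequality ``does not reintegrate'' because of the concave power $1/(2p-1)$; this refers to naive linear integration, and your Minkowski step is a legitimate way around it that the paper does not exploit.

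The paper instead takes a different route after localization: rather than proving the full quantitative Bishop--Gromov on each ray and reassembling, it works from the outset with the \emph{integrated} spherical quantity $S_T(r)=\int_{Q_T(r)}h_q(r)\,\d\mathfrak q$ and proves (Lemma~\ref{lem:2.2fatto_meglio}\,(iii)) that a suitable perturbation of $\big(S_T/h_{K,N}\big)^\alpha$ is non-increasing. This monotonicity is then combined with the one-dimensional mean-curvature-deficit estimate of Proposition~\ref{prop:mean vs deficit 1D} (controlling $\psi_q^{2p-1}h_q$ by the curvature deficit) and a H\"older step to bound $\tfrac{\d}{\d t}\big(V_T/v_{K,N}\big)^{1/(2p-1)}$ directly. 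Your route is more modular and arguably cleaner; the paper's route keeps closer track of the explicit constant~\eqref{eq:CpKNR}, and the integrated monotonicity lemma is independently reusable (it reappears in the diameter and Cheng arguments).

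Two points of care in your Step~2. First, what Lemma~\ref{lem:average integral tend zero} actually delivers from $\mm(B_\varrho(x))=o(\varrho)$ is only $\liminf_{r\downarrow 0}\fint_0^r h_\alpha=0$ for $\mathfrak q$-a.e.\ $\alpha$, not $h_\alpha(0^+)=0$; this is the hypothesis under which the paper's Proposition~\ref{prop:mean vs deficit 1D} is proved, via a regularization $h_\eps=\exp((\log h)\ast\eta_\eps)$ and a limit argument for the boundary term. Your Wronskian manipulation must be justified through the same smoothing, since $\phi=h^{1/(N-1)}$ is only locally Lipschitz and $k$ merely lower semicontinuous. Second, for rays with $t_\alpha<R$ you need to replace $R$ by $t_\alpha$ in the one-dimensional estimate and use $v_{K,N}(R)\ge v_{K,N}(t_\alpha)$ together with the monotonicity of $C_{K,N,p}(\cdot)$; this is routine but should be stated.
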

The above statement is a Bishop-Gromov type comparison result that quantifies, in terms of the integral curvature deficit $\rho_p^k(B_R(x),K)$, how close the function
\[
(0,R)\ni r \mapsto \frac{\mm(B_r(x))}{v_{K,N}(r)},
\]
is to be monotone (which is the case if $k\ge K$ and the integral curvature deficit vanishes, see \cite{Sturm06I,Sturm06II}). In particular, Theorem~\ref{thm:BG} extends the foundational manifold analysis of \cite{PetersenWei97,PetersenSprouse98} to the setting of ${\sf CD}$ spaces with variable Ricci curvature lower bounds.

The assumption $\mm(B_\varrho(x))=o(\varrho)$ is needed to deal with potentially too collapsed spaces. This is ultimately linked to the validity of one-dimensional mean curvature comparison estimate for nonsmooth densities, carried out in Proposition \ref{prop:mean vs deficit 1D} in the spirit of \cite{PetersenWei97,PetersenSprouse98,Aubry2007}. Moreover, our assumption clarifies previous literature for collapsed settings, see Remark~\ref{rem:Ket vs Noi} for a detailed discussion.

An analogous result has been obtained in various (smooth) settings in \cite{Wu19,LiWuZheng21,ChengFeng25} with integral Bakry-\'Emery Ricci bounds. We remark that, in all of these works, assumptions playing the same role as $\mm(B_\varrho(x))=o(\varrho)$ are required. Our condition is substantially weaker and automatically satisfied, for example, in the non-collapsed setting.

\medskip

We next present our second main result, namely a Myers' diameter theorem for nonsmooth spaces with sufficiently small integrable curvature deficit.
\begin{theorem}
\label{thm:diameter_estimate}
    For every $N\in [2,\infty),K>0$ and $p>N/2$ there exists a constant $C_{K,N,p}>0$ such that the following holds. Let $(\X,\sfd,\mm)$ be an essentially non-branching ${\sf CD}(k,N)$ for some $k\colon \X\to\R$ admissible. Assume that $\mm(B_\varrho(x))=o(\varrho)$ as $\varrho\downarrow 0$ at $\mm$-a.e.\ $x\in \X$.  If $\rho^k_p(\X,K)<+\infty$  then $\mm(\X)<\infty$. Moreover, if
    \begin{equation*}
      \rho^k_p(\X,K)\le \frac{\mm(\X)}{C_{K,N,p}},   
    \end{equation*}
    then $(\X,\sfd)$ is compact and it holds
    \begin{equation}
        {\rm diam}(\X) \le \pi\sqrt{\frac{N-1}{K}}\left(1 + C_{K,N,p} \left(\frac{\rho^k_p(\X,K)}{\mm(\X)}\right)^{\frac{1}{5}}\right).\label{eq:diameter estim}
    \end{equation}
\end{theorem}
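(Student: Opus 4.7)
The plan is to reduce the diameter bound to a one-dimensional Myers-type statement with integrable deficit, via the Cavalletti-Mondino localization technique (extended to variable $CD(k,N)$ bounds by Ketterer), and then to upgrade the resulting needle-wise bound to a uniform one by combining it with the Bishop-Gromov inequality of Theorem~\ref{thm:BG}. First I would fix $x_0 \in \X$ which is a Lebesgue point of $k$ and for which $\mm(B_\varrho(x_0)) = o(\varrho)$ as $\varrho\downarrow 0$ (such $x_0$ exist by the hypothesis, since these conditions hold at $\mm$-a.e.\ point). Applying the needle decomposition to the $1$-Lipschitz function $u \coloneqq \sfd(\cdot, x_0)$ then yields a disintegration
\[
    \mm|_{\mathcal T} \;=\; \int_Q (\gamma_q)_\#\!\big(h_q \Leb^1|_{[0, L_q]}\big)\, d\mathfrak q(q),
\]
where $\mathcal T$ is the transport set, each $\gamma_q$ is a unit-speed minimizing geodesic starting at $x_0$, and $h_q$ satisfies a 1D $CD(k\circ \gamma_q, N)$ condition. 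Setting $m_q \coloneqq \int_0^{L_q} h_q\, dr$ and $\sigma_q \coloneqq \int_0^{L_q} |\min(k\circ\gamma_q - K, 0)|^p h_q\, dr$, one has $\mm(\mathcal T) = \int_Q m_q\, d\mathfrak q$ and $\rho^k_p(\mathcal T, K) = \int_Q \sigma_q\, d\mathfrak q$.

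Next, I would establish a one-dimensional Myers theorem with deficit: for any 1D $CD(k, N)$ density $h$ on $[0, L]$ with $K>0$, $p>N/2$, I expect an estimate of the form
\[
    L - \pi\sqrt{\tfrac{N-1}{K}} \;\le\; C_{K,N,p}\, \left(\frac{\sigma}{m}\right)^{\alpha}
\]
for some positive exponent $\alpha \in (0,1)$ (with $\sigma, m$ the 1D deficit and mass). This is the one-dimensional building block; it follows from the 1D mean-curvature comparison with deficit of Proposition~\ref{prop:mean vs deficit 1D}, since any overshoot past the classical Myers threshold $\pi\sqrt{(N-1)/K}$ forces a matching lower bound on the weighted deficit $\sigma/m$, which inverts to an upper bound on the overshoot.

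Finally, I would transfer this to a pointwise diameter bound on $\X$ and deduce the finite-mass statement. Summing the 1D bound over $Q$ and applying H\"older gives an averaged inequality of the form $\int_Q (L_q - \pi\sqrt{(N-1)/K})_+^{1/\alpha}\, m_q\, d\mathfrak q \lesssim \rho^k_p(\X, K)^{a}\, \mm(\X)^{1-a}$. To upgrade this $L^1$-type bound to an $L^\infty$ bound on $\sfd(x_0, y)$ for $y \in \mathrm{supp}(\mm)$, I argue by contradiction: should $\sfd(x_0, y) > \pi\sqrt{(N-1)/K}\,(1+\epsilon)$, every point $z$ in a suitably small ball around $y$ lies on a needle of length exceeding $\pi\sqrt{(N-1)/K}\,(1+\epsilon/2)$; combining the needle-wise deficit lower bound from the previous step, the mass lower bound on such a ball furnished by Bishop-Gromov (Theorem~\ref{thm:BG}), and the global smallness assumption $\rho^k_p(\X,K) \le \mm(\X)/C_{K,N,p}$ produces a quantitative inequality of the type $\epsilon \lesssim (\rho^k_p(\X,K)/\mm(\X))^{1/5}$, which is precisely \eqref{eq:diameter estim}. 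The finiteness $\mm(\X) < \infty$ follows as a by-product: under the merely finite deficit assumption a coarser version of the same contradiction argument still bounds $\mathrm{supp}(\mm)$ a posteriori, and one then invokes \eqref{eq:bishop_gromov_quantitative_theta} on a ball of radius $\pi\sqrt{(N-1)/K}$ centred at $x_0$. The \emph{main obstacle} I expect is precisely this upgrade from the averaged 1D bound to a pointwise distance bound: the localization only controls $\mathfrak q$-a.e.\ needle length, so reaching the diameter of the full support requires simultaneously invoking a Bishop-Gromov mass lower bound around an arbitrary base point and optimising a H\"older interpolation. The peculiar exponent $1/5$ is the numerical signature of this interpolation rather than of the 1D estimate itself.
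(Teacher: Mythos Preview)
Your outline shares the same high-level architecture as the paper --- localization from a point, a one-dimensional curvature-deficit estimate, and a contradiction using a volume lower bound around a far-away point --- but it diverges from the paper's proof in two structural ways and has one genuine gap.

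\textbf{Structural differences.} First, the paper does \emph{not} localize from a single base point. Instead it builds a Voronoi-type partition of $\X$ into open star-shaped sets $\{T_i\}_{i\in I}$ centred at a $4\pi$-separated net (Proposition~\ref{prop:partition_with_voronoi}). This is what makes the finiteness-of-mass step clean: either $\inf_i \rho_p^k(T_i,K)/\mm(T_i)>0$, in which case summing over the partition gives $\mm(\X)<\infty$ directly, or the infimum is zero and one cell already has arbitrarily small scaling-invariant deficit, so the diameter bound for \emph{that} cell (Corollary~\ref{cor:diameter_estimate}) forces compactness. Your ``coarser version of the same contradiction argument'' for finiteness is circular as stated: you invoke Bishop--Gromov on $B_{\pi\sqrt{(N-1)/K}}(x_0)$, but Theorem~\ref{thm:BG} requires the deficit on that ball to be controlled relative to its own mass, which you do not yet know is finite or comparable to $\mm(\X)$. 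Second, the paper never proves a needle-wise inequality of the form $L_q-\pi\sqrt{(N-1)/K}\le C(\sigma_q/m_q)^\alpha$. It works instead with the \emph{integrated} spherical quantity $S_T(r)=\int_{Q_T(r)}h_q(r)\,\d\mathfrak q$ and shows (Lemma~\ref{lemma:small_perimeter_of_large_balls}) that $S_T(r)$ is small for $r>\pi$; this is then played against a volume lower bound to get a contradiction (Proposition~\ref{prop:Lemma_1.4_aubry}), and that argument is iterated once (Corollary~\ref{cor:diameter_estimate}) to halve the exponent, yielding $2\gamma=1/5$ for $N\ge 2$.

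\textbf{The gap.} Your contradiction step needs a \emph{lower} bound on $\mm(B_\delta(y))$ for $y$ far from $x_0$, in terms of $\mm(B_R(x_0))$. Theorem~\ref{thm:BG} as stated is a \emph{monotonicity-with-error} statement for concentric balls and does not give this. The paper supplies it separately via a ball-chaining argument along a geodesic from $x_0$ to $y$ (Lemma~\ref{prop:volume_growth}), which is itself a nontrivial iteration of the concentric Bishop--Gromov estimate and introduces an extra loss in the exponent (this is where the $3N$ in the proof of Proposition~\ref{prop:Lemma_1.4_aubry} comes from). Without this ingredient your $L^1\to L^\infty$ upgrade does not close, and in particular the exponent $1/5$ is not the outcome of a single H\"older interpolation but of the combination of Lemma~\ref{prop:volume_growth}, Lemma~\ref{lemma:small_perimeter_of_large_balls}, and the iteration in Corollary~\ref{cor:diameter_estimate}.
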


The above statement can be regarded as a quantified version of the celebrated Myers' diameter theorem: if the averaged integral curvature deficit is sufficiently small, then the space is compact and the maximal diameter estimate~\eqref{eq:diameter estim} holds up to an error controlled by the deficit. Our result extends previous works \cite{Sprouse00,Aubry2007} on Riemannian manifolds to the nonsmooth setting; see also \cite{WuBY19,LiWuZheng21} for related generalizations in possibly weighted frameworks. Theorem \ref{thm:diameter_estimate} is proved as a byproduct of Theorem \ref{thm:diameter_estimate_body} where also the case $N\in(1,2)$ is treated. In this dimension range, the estimate \eqref{eq:diameter estim} trivializes as $N\downarrow 1$ as expected, see Remark \ref{rem:gamma ge 2} for more details. As before, the assumption $\mm(B_\varrho(x))=o(\varrho)$ is linked to the estimates developed in Proposition \ref{prop:mean vs deficit 1D}.

\medskip

We present our final main result. For $\Omega \subset \X$ open, the $p$-Dirichlet eigenvalue is defined as
\begin{equation} \label{eq:variational_eigenvalue}
    \lambda_p(\Omega) \coloneqq \inf_{u \in W^{1,p}_0(\Omega)}
    \frac{\int_{\Omega} |Du|_p^p\,\d \mm}{\int_{\Omega} |u|^p\,\d \mm},
\end{equation} 
where $W^{1,p}_0(\Omega) \coloneqq \overline{{\rm Lip}_{bs}(\Omega)}$ denotes the Sobolev space with homogeneous Dirichlet boundary conditions. Here, the closure is with respect the norm $\|u\|_{W^{1,p}(\Omega)}^p \coloneqq \|u\|_{L^p(\mm)}^p + \||Du|_p\|_{L^p(\mm)}^p$ and $|Du|_p$ is the minimal $p$-weak upper gradient (see \cite{AmbrosioGigliSavare11-3,Gigli12}). In the statement below, $\lambda_p(K,N,r)$ is the $p$-Dirichlet eigenvalue in the one-dimensional $(K,N)$-model, as defined in \eqref{eq:p eigenvalue}.
\begin{theorem}\label{thm:cheng}
    For every $K\in \R,N \in (1,\infty), p_0>N/2, p \in (1,\infty)$ and $r \in \left(0,\pi\sqrt{\tfrac{N-1}{K^+}}\right)$, there  exist $\eps \coloneqq\eps_{K,N,r,p,p_0}>0$ and $C_{K,N,r,p,p_0}>0$ such that the following holds. Let $(\X,\sfd,\mm)$ be an essentially non-branching ${\sf CD}(k,N)$ for some $k\colon \X\to\R$ admissible. Set $\bar p = \max\{p/2,p_0 \}$ and suppose, for $x\in\X$ with $\mm(B_\varrho(x))=o(\varrho)$ as $\varrho\downarrow 0$, that
    \begin{equation}
        \frac{\rho^k_{\bar p}(B_r(x),K)}{\mm(B_r(x))}  <\eps.\label{eq:p cheng small deficit}
    \end{equation}
    Then, it holds
    \[
    \lambda_p(B_r(x))  \le \lambda_p(K,N,r) + C_{K,N,r,p,p_0} \left(\frac{ \rho^k_{\bar p}(B_r(x),K)}{\mm(B_r(x))}\right)^{\frac{1}{2\bar p-1}}.
    \]
\end{theorem}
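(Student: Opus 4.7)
The plan is to use the variational characterisation~\eqref{eq:variational_eigenvalue} with a radial test function built from the one-dimensional $(K,N)$-model eigenfunction, and then quantify the resulting Rayleigh quotient against $\lambda_p(K,N,r)$ via the quantitative Bishop--Gromov comparison of Theorem~\ref{thm:BG}. Let $\psi\colon[0,r]\to[0,\infty)$ denote the first positive Dirichlet eigenfunction of the one-dimensional $(K,N)$-model on $(0,r)$; it is Lipschitz, non-increasing, and satisfies $\psi(r)=0$ and $\psi'(0)=0$. Setting $u(y)\coloneqq\psi(\sfd(x,y))$ and extending by zero to $\X$ produces an admissible test function in $W^{1,p}_0(B_r(x))$ with $|Du|_p(y)\le|\psi'(\sfd(x,y))|$ at $\mm$-a.e.\ $y$. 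The layer-cake formula will then reduce the Rayleigh quotient of $u$ to a one-dimensional ratio of Stieltjes integrals against $\d V$, where $V(s)\coloneqq\mm(B_s(x))$.

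Next, I will integrate by parts in the Stieltjes formulation, using $\psi(r)=0$ and $V(0)=0$ (ensured by $\mm(B_\varrho(x))=o(\varrho)$), to rewrite numerator and denominator as weighted integrals of $V$ against the signed densities $-(|\psi'|^p)'$ and $-(\psi^p)'$, plus a boundary contribution $V(r)|\psi'(r)|^p$ in the numerator. The identical manipulation performed with $V$ replaced by the model weight $v_{K,N}$ reproduces exactly $\lambda_p(K,N,r)$. Hence the question reduces to estimating the one-dimensional ratio when $V$ is replaced by $v_{K,N}$; after the ratio-neutral rescaling $V\mapsto V/\theta_r$ with $\theta_r\coloneqq V(r)/v_{K,N}(r)$, the comparison becomes a direct weight-against-weight estimate.

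At this stage, Theorem~\ref{thm:BG} applied with $T=B_r(x)$ and exponent $\bar p>N/2$ yields the bound
\[
\frac{V(s)}{V(r)}\ \ge\ \frac{v_{K,N}(s)}{v_{K,N}(r)}\,\bigl(1-C_{K,N,\bar p,r}\,\delta^{1/(2\bar p-1)}\bigr),\qquad s\in(0,r],
\]
with $\delta\coloneqq\rho^k_{\bar p}(B_r(x),K)/\mm(B_r(x))$, obtained by a convexity expansion of the power $2\bar p-1$ in~\eqref{eq:bishop_gromov_quantitative}. The smallness condition~\eqref{eq:p cheng small deficit}, combined with the density estimate~\eqref{eq:bishop_gromov_quantitative_theta} applied on a small inner ball, will keep $\theta_r$ bounded below by a positive constant depending only on $K,N,r$, so that the bracket above stays positive. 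Substituting this weight sandwich into the Stieltjes ratio of the previous step, and using the Euler--Lagrange identity for $\psi$ (which expresses $|\psi'|^{p-1}$ as an integral of $\psi^{p-1}$ against $v_{K,N}$) to handle the boundary term $V(r)|\psi'(r)|^p$, one arrives at $\lambda_p(B_r(x))\le\lambda_p(K,N,r)\bigl(1+C'\delta^{1/(2\bar p-1)}\bigr)$. The additive bound in the statement then follows by absorbing the bounded factor $\lambda_p(K,N,r)$ into the constant.

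The main obstacle lies in this final comparison of two weighted Rayleigh quotients: Theorem~\ref{thm:BG} only provides a one-sided, cumulative comparison of $V(s)/v_{K,N}(s)$, whereas the quotient apparently demands pointwise two-sided control of the weight. One circumvents this via the anchoring of both weights at $s=r$ and the smallness assumption~\eqref{eq:p cheng small deficit}, which keeps $\theta_r$ uniformly above a positive threshold depending only on the data. The choice $\bar p=\max\{p/2,p_0\}$ is in turn forced by this step: $\bar p>N/2$ is necessary in order to invoke Theorem~\ref{thm:BG}, whereas $\bar p\ge p/2$ arises from the H\"older-type interpolation needed to propagate the weight deviation into the $p$-power integrands $|\psi'|^p$ and $\psi^p$.
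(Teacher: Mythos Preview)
Your route is genuinely different from the paper's, and as written it has a real gap at exactly the place you flag as ``the main obstacle''.

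The paper never invokes Theorem~\ref{thm:BG}. Instead it disintegrates $\mm$ along the rays of $\sfd_x$ via Theorem~\ref{thm:localization}, writes $\int_B|\phi'|^p\circ\sfd_x\,\d\mm=\iint_0^{\tilde r_q}|\phi'|^p h_q\,\d t\,\d\frq$, and integrates by parts \emph{ray by ray} using the Euler--Lagrange ODE~\eqref{eq:p ODE}. This produces $\lambda_p(K,N,r)\int_B|u|^p\,\d\mm$ plus an error $\iint|\phi'|^{p-1}\psi_q h_q$, where $\psi_q=((\log h_q)'-H_{K,N})^+$ is the mean curvature deficit along each ray. H\"older and Proposition~\ref{prop:mean vs deficit 1D} then convert $\iint\psi_q^{2\bar p-1}h_q$ into $\rho^k_{\bar p}(B_r(x),K)$ after reintegration. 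The only doubling-type input is Proposition~\ref{prop:doubling uniform at x}, used to pass from $\mm(B_{\bar s}(x))$ to $\mm(B_r(x))$ in the denominator; the role of $\bar p\ge p/2$ is the H\"older step from $\psi_q^p$ to $\psi_q^{2\bar p-1}$.

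Your layer-cake reduction leads to comparing the quotient $\int_0^r|\psi'|^p\,\d V\big/\int_0^r\psi^p\,\d V$ against the same with $\d v_{K,N}$. After normalising by $\theta_r$, Theorem~\ref{thm:BG} gives only the one-sided bound $V(s)\ge(1-C\delta^{1/(2\bar p-1)})\theta_r v_{K,N}(s)$. This is enough for the denominator, since $-(\psi^p)'\ge0$, but not for the numerator: the integrand $-(|\psi'|^p)'$ changes sign on $(0,r)$ (negative near $0$, positive near $r$, by the ODE), so an upper bound on $V(s)/v_{K,N}(s)$ is required as well. Bishop--Gromov does not supply one: even with zero deficit the ratio runs from $\theta_N(x)$ down to $\theta_r$, and $\theta_N(x)/\theta_r$ is unconstrained by $K,N,r$. ``Anchoring at $s=r$'' fixes the normalisation but not the missing side of the sandwich, and the sentence about $\theta_r$ being ``bounded below by a positive constant depending only on $K,N,r$'' is false (it scales with $\mm$).

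What actually drives Cheng is the area/Laplacian comparison, i.e.\ control of $(\log h_q)'$ versus $H_{K,N}$, which is strictly stronger than the integrated volume comparison. If you want to keep a ``global'' argument, the correct input is Lemma~\ref{lem:2.2fatto_meglio}(iii) for $S_T/h_{K,N}$, not Theorem~\ref{thm:BG}: writing $\d V=S_T\,\d s$ and integrating by parts once more, the error term becomes $\int\psi|\psi'|^{p-2}\psi'\,(S_T/h_{K,N})'\,h_{K,N}$, whose sign is definite once you have quantitative monotonicity of $S_T/h_{K,N}$. But that is the paper's localisation argument reorganised, not the Bishop--Gromov route you sketched.
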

This theorem extends Cheng's eigenvalue comparison to the nonsmooth setting under a smallness condition on the averaged integral curvature deficit. The case $p=2$ was first established in \cite{PetersenSprouse98}, and later extended to all $p \in (1,\infty)$ in \cite{SetoWei2017}. In the nonsmooth setting, a Cheng's theorem for $2$-eigenvalues was recently obtained in \cite{DeLucaDePontiMondinoTomasiello25}. In contrast to Theorem \ref{thm:cheng}, their focus is on rigidity/stability statements for nonsmooth spaces admitting a constant Ricci lower bound.

\medskip 
\noindent\textbf{The localization method.} The localization method is a central tool in all our results. Originally developed in nonsmooth settings in \cite{CavallettiMondino17-Inv}, after earlier the work \cite{Klartag17} on manifolds, it has since been widely applied, most notably to show the L\'evy-Gromov isoperimetric inequality in the same work \cite{CavallettiMondino17-Inv} and to derive new results in geometric analysis, see e.g.\ \cite{CavallettiMondino17,CavallettiMaggiMondino19,FathiGentilSerres22,CavallettiMondinoSemola23}. More recently, it has been adapted to novel contexts, such as nonsmooth Lorentzian geometry \cite{CavallettiMondino24,BraunnMcCann24}.

The classical works \cite{PetersenWei97,PetersenSprouse98,Aubry2007} in the Riemannian setting reduce the problem to a one-dimensional analysis via polar coordinates. In low-regularity spaces, localization can play naturally an analogous role to nonsmooth polar coordinates. Indeed, to show Theorems \ref{thm:BG}, \ref{thm:diameter_estimate}, and \ref{thm:cheng}, it suffices to localize the curvature-dimension condition along transport rays associated with the distance from a point. This follows by a more general localization result, cf.\ Theorem \ref{thm:localization}, which holds for arbitrary $1$-Lipschitz functions.
Thus, our statement fully extends the analysis of \cite[Thm.\ 5.1]{CavallettiMondino17-Inv} and requires no integrability assumption on the variable Ricci bound $k\colon\X\to\R$.

It is relevant to point out a technical aspect of our applications that, differently from \cite{CavallettiMondino17-Inv}, we encounter here. The localization is capable of acting as a one-dimensional reduction method in the case of constant Ricci lower bounds; see for instance \cite{CavallettiMondino17} where many functional comparison inequalities are derived in this way. On the other hand, even if the spirit of the present paper is the same, this idea does not completely carry over to the variable Ricci curvature lower bound setting. Indeed, examining for instance Theorem \ref{thm:BG}, knowing that inequality \eqref{eq:bishop_gromov_quantitative} holds on weighted intervals does not directly imply that it holds on $(\X,\sfd,\mm)$ via disintegration of the reference measure $\mm$. This is because the reminder term containing the integral curvature deficit does not reintegrate to \eqref{eq:defect} due to a non-linear power $1/(2p-1)$ (indeed, it is concave, while convexity would be needed to reintegrate). Consequently, the one-dimensional analysis needs to be combined with reintegration procedures, and this poses technical obstacles due to the nonsmooth ambient space (cf.\ Proposition \ref{prop:mean vs deficit 1D} and Lemmas \ref{lem:average integral tend zero} and \ref{lem:2.2fatto_meglio}).

Finally, we believe that the localization method in Theorem \ref{thm:localization} is an ingredient that can be potentially applicable to study further comparison results requiring novel one-dimensional analysis where polar coordinates might not be the natural choice, e.g.\ Laplacian and isoperimetric comparisons and spectral inequalities. These will be the object of future investigations.

\section{Preliminaries}
We start setting up some notation. Given $a,b\in\R$, we set $a\wedge b \coloneqq \min\{a,b\}$ while $ a\vee b \coloneqq \max\{a,b\}$ and $a^+\coloneqq a\vee 0,a^-\coloneqq a\wedge 0$. A metric measure space is a triple  $(\X,\sfd,\mm)$ where $(\X,\sfd)$ is a complete and separable metric space and $\mm$ is a non-negative, non-zero and boundedly finite Borel measure. 

We denote by $C([0,1],\X)$ the space of continuous curves with values in $\X$, which is a complete and separable metric space when endowed with the supremum distance. We say that $\gamma \in C([0,1],\X)$ is a metric $\sfd$-geodesic provided $\sfd(\gamma_t,\gamma_s)=|t-s|\sfd(\gamma_0,\gamma_1)$ for all $t,s \in [0,1]$. We denote by ${\rm Geo}(\X)\subset C([0,1],\X)$ the set of metric $\sfd$-geodesics, which is a closed subset of $C([0,1],\X)$, hence complete. We denote by $\ell(\gamma) \in [0,\infty]$ the length of a curve $\gamma \in C([0,1],\X)$ and, if $\gamma \in {\rm Geo}(\X)$, we have $\ell(\gamma)=\sfd(\gamma_0,\gamma_1)$. Finally, given a function $f \colon \X \to \mathbb{R}$ and $\gamma \in {\rm Geo}(\X)$, we define ${f_{\gamma}^+} = f_\gamma \coloneqq  f \circ \hat\gamma$, where $\hat\gamma\colon [0,\ell(\gamma)]\to \X$ is the arc-length parametrization of $\gamma$. Similarly $f_{\gamma}^-\coloneqq f\circ \hat \gamma^-$ for $\hat\gamma^-_t\coloneqq \hat\gamma_{1-t}$.

By $C(\X)$, ${\rm Lip}(\X)$ and ${\rm Lip}_{bs}(\X)$, we denote respectively the collection of continuous, Lipschitz and boundedly supported Lipschitz functions on $\X$. For all $p \in (1,\infty)$, we denote by $L^p(\mm),L^p_{loc}(\mm)$ respectively the space of $p$-integrable functions and $p$-integrable functions on a neighborhood of every point (up to $\mm$-a.e.\ equality relation) on $\X$. By $\mathcal{P}(\X),\mathcal{P}_2(\X)$ we respectively denote the set of Borel probability measures and Borel probability measures with finite $2$-moment. The space $\mathcal{P}_2(\X)$ is a metric space when endowed with the Wasserstein $2$-distance $W_2$. We refer to \cite{AmbrosioBrueSemola24_Book} for a detailed treatment. In addition, given $N\ge 1$, we denote the Bishop-Gromov density at $x\in \X$ by
\begin{equation}\label{eq:BDdensity}
    \theta_N(x) \coloneqq  \lims_{r\downarrow 0}\frac{\mm(B_r(x))}{\omega_Nr^N} \in [0,\infty],\qquad\text{
where }\omega_N\coloneqq \pi^{N/2}\Gamma(N/2+1)^{-1}.
\end{equation}
Finally, throughout the paper, we write $\mm(B_\varrho(x))=o(\varrho)$, with the understanding that $\varrho\downarrow 0$.

\subsection{Metric measure spaces with variable Ricci bounds}
Let $\kappa \colon [a,b] \to \R$ be continuous. We define the generalized sine function $\frs_\kappa : [a,b]\to \R$ as the unique function $v$ that solves
\begin{equation}
\label{eq:sturm_liouville_ODE}
        v''+ \kappa v=0,
\end{equation}
with initial conditions $v(a)= 0$, $v'(a)= 1$. Recall that, for $K \in \R, N>1$, $a=0$, $b= \pi \sqrt{\frac{N-1}{K^+}} $ and $ \left[0, \pi \sqrt{\frac{N-1}{K^+}}\right] \ni t \mapsto \kappa(t)\equiv K/(N-1)$ constant, we have
\[
\frs_\kappa(t) = \sin_{K/(N-1)}(t) = \begin{cases}
    \, \sin\left(t\sqrt{\frac{K}{N-1}}\right) &\text{if }K>0 ,\\
    \, t &\text{if } K=0,\\
    \,  \sinh\left(t\sqrt{\frac{-K}{N-1}}\right)&\text{if } K<0.
\end{cases}
\]
\begin{definition}[Distortion coefficients]\label{distort}
Let $\kappa:[0,{L}]\rightarrow \mathbb{R}$ be continuous and let $\theta \in (0,L]$, $t\in [0,1]$. 
Then, the distortion coefficient is defined as
\begin{equation}
\sigma_{\kappa}^{{(t)}}(\theta)\coloneqq \begin{cases}
\frac{\frs_{{\kappa}}(t\theta)}{\frs_{{\kappa}}(\theta)}& \mbox{ if }\frs_{\kappa}|_{(0,\theta]}>c>0,\\
\infty & \mbox{ otherwise}.
\end{cases}
\end{equation} 
\end{definition}
Observe that, if $\sigma_{\kappa}^{{(t)}}(\theta)<\infty$ for some $t$ (hence, for all), then $[0,1]\ni t\mapsto\sigma_{\kappa}^{{(t)}}(\theta)$ is a solution of 
\begin{align}\label{klebeband}
u''(t)+\kappa(t\theta)\theta^2u(t)=0
\end{align}
satisfying $u(0)=0$ and $u(1)=1$. 
\begin{definition}[Admissible variable lower bound]
    Let $(\X,\sfd)$ be a metric space. We say that a function $k \colon \X \to \R$ is \emph{admissible} if it is locally bounded below and lower semi-continuous.  
\end{definition}
Given an admissible $k$, we define the increasing sequence of continuous functions $k_n \colon \X \to \R$ as 
\begin{equation}
\label{eq:k_n_continuous}
    k_n(x) \coloneqq \inf_{y \in \X} \left(k(y)+n\sfd(x,y) \right)\wedge n \le k(x),\qquad\forall\,x\in\X,\ n\in \N.
\end{equation}
Note that $k_n\uparrow k$ pointwise as $n\uparrow\infty$. If $\gamma\colon [0,1] \to \X$ is a geodesic of length $\theta\coloneqq \ell(\gamma)$, we can set
\begin{equation}\label{eq:sigma monotone def}
    \sigma_{k_\gamma}^{(t)}(\theta)\coloneqq \lim_{n\to \infty} \sigma_{(k_{n})_\gamma}^{(t)}(\theta) \in \mathbb{R} \cup \{+\infty\},\qquad\forall\,t\in [0,1],
\end{equation}
and, given $N> 1$, we set $\sigma_{k_{\gamma},N-1}^{(t)}(\theta)\coloneqq \sigma_{k_{\gamma}/(N-1)}^{(t)}(\theta)$. Note that the definition \eqref{eq:sigma monotone def} is well-posed, since the sequence is monotone non-decreasing. 
Moreover, $(t,\theta)\mapsto \sigma_{(k_{n})_\gamma}^{(t)}(\theta)$ is continuous, hence $(t,\theta)\mapsto \sigma_{k_{\gamma}}^{(t)}(\theta)$ is lower semicontinuous. In the case $k$ is already continuous, \eqref{eq:sigma monotone def} is consistent with Definition \ref{distort}. Finally, this definition is independent of the monotone sequence $\{k_n\}_{n\in \N}$ chosen in \eqref{eq:k_n_continuous} if $\{k'_n\}_{n\in\N}$ is another monotone sequence with the property that $\|k_n-k'_n\|_{\infty}\to 0$ as $n\to\infty$. We refer to \cite{Ket17} for further details. It is possible to show that this is also the case when $k\in L^\infty_{loc}(\mm)$.

\begin{definition}[Generalized distortion coefficients]
Let $(\X,\sfd,\mm)$ be a metric measure space, $N>1$ and $k:\X\to \R$ be admissible. Given a geodesic $\gamma:[0,1]\to \X$ of length $\theta\coloneqq \ell(\gamma)$, we define the \textit{generalized distortion coefficients with respect to $k$ and $N$ along $\gamma$} as
\begin{align*}
\tau_{k_{\gamma},N}^{(t)}(\theta)\coloneqq t^{\frac{1}{N}}\big[\sigma_{k_{\gamma},N-1}^{(t)}(\theta)\big]^{\frac{N-1}{N}},
\end{align*} 
for every $t\in [0,1]$, with  the conventions $r\cdot\infty=\infty$ for $r>0$ and $0\cdot\infty=0$.
\end{definition}
We assume the reader to be familiar with the basic concepts of optimal transport, such as Wasserstein distance and optimal geodesic plans (denoted as ${\rm OptGeo}(\mu_0,\mu_1)$ for given marginals $\mu_0,\mu_1\in\mathcal P_2(\X)$) referring, e.g., to \cite{AmbrosioBrueSemola24_Book}. Given $N>1$, define the $N$-\emph{R\'enyi-entropy functional}
\[ 
S_N: \mathcal{P}_2(\X) \to [-\infty,0],\qquad 
S_N(\mu) \coloneqq  -\int \rho^{1-\frac{1}{N}}\, \d \mm,
\]
where $\mu=\rho \mm + \mu^s$ and $\mu^s$ is singular with respect to $\mm$. The evaluation map at time $t\in [0,1]$ is defined as $\e_t:C([0,1],\X)\to \X$ such that $\e_t(\gamma)\coloneqq \gamma_t$. It is continuous, hence Borel. 

We recall the definition of curvature dimension condition with variable bound, introduced in \cite{Ket17}. 
\begin{definition}[${\sf CD}(k,N)$ space]
\label{def:CDkN}
Let $(\X,\sfd,\mm)$ be a metric measure space, let $k\colon \X\rightarrow \mathbb{R}$ be admissible and let $N> 1$. We say that $(\X,\sfd,\mm)$ satisfies the \textit{curvature dimension condition}
${\sf CD}(k,N)$ if for each pair $\mu_0=\rho_0\mm,\mu_1=\rho_1\mm\in \mathcal{P}_2(\X)$ with bounded support
there exists $\ppi \in {\rm OptGeo}(\mu_0,\mu_1)$ such that $\mu_t\coloneqq {(\e_t)}_\sharp\ppi$ is absolutely continuous with respect to $\mm$ and
\begin{equation}
\label{eq:cd_condition_variable}
S_{N'}(\mu_t)\leq{\textstyle -}\!\!\int \tau_{k^{-}_{\gamma},N'}^{(1-t)}(\sfd(\gamma_0,\gamma_1))\rho_0\left(\e_0(\gamma)\right)^{-\frac{1}{N'}}+\tau_{k^{+}_{\gamma},N'}^{(t)}(\sfd(\gamma_0,\gamma_1))\rho_1\left(\e_1(\gamma)\right)^{-\frac{1}{N'}}\d\ppi(\gamma),
\end{equation}
for all $t\in [0,1]$ and all $N'\geq N$.
\end{definition}
The definition of ${\sf CD}$ space for $N=1$ can be given as well, see \cite{Ket17}. We refrain to do it here, since all our results only hold for $N>1$. 
If $(\X,\sfd,\mm)$ is a ${\sf CD}(k,N)$ space and $k\colon \X\to\R$ admissible, then the Bishop-Gromov $N$-density in \eqref{eq:BDdensity} exists at every point $x \in\X$. Indeed, since $k$ is locally bounded from below, on a neighborhood of $x$ it holds $k \ge K_0$ for some $K_0 \in \R$. Thus $r\mapsto \mm(B_r(x))/v_{K_0,N}(r)$ is non-increasing on a right neighborhood of zero (cf.\ \cite[Thm.\ 5.9]{Ket17}) and $\lim_{r \to 0} \frac{v_{K_0,N}(r)}{\omega_N r^N}=1$.

We conclude this section showing that ${\sf CD}$ spaces with variable Ricci curvature lower bounds are qualitatively non-degenerate, cf. \cite{CavHue15,Kell17}. Given $A,B\subset \X$ and $t \in [0,1]$, the set of $t$-midpoints is
\begin{equation}
    M_t(A,B)\coloneqq \{ \e_t(\gamma) :\gamma \in {\rm Geo}(\X) \text{ such that }\gamma_0 \in A,\gamma_1 \in B\}.
\end{equation}
We also define, for $K\in\R$,
\begin{equation}
    \Theta_{K}(A,B)\coloneqq \begin{cases}
        \inf_{x \in A, y \in B} \sfd(x,y),& \text{if }K \ge 0,\\
        \sup_{x \in A, y \in B} \sfd(x,y),& \text{if }K < 0.
    \end{cases}
\end{equation}
The following proof is an adaptation of \cite[Cor.\ 3.6]{MPR22}.
\begin{proposition}
\label{prop:qualit_non_deg}
    Let $(\X,\sfd,\mm)$ be a ${\sf CD}(k,N)$ space for some $N>1$ and $k\colon \X\to \R$ admissible. For every ball $B_R(p) \subset \X$, set $K_0\coloneqq \min\{ k(x) \colon x \in \overline {B_{2R}(p)}\}$. Then, if $A \subset B_R(p)$ and $x\in B$, it holds 
    \begin{equation}
    \label{eq:tmidpoints_on_bounded_sets}
        \mm(M_t(A,x)) \ge \tau_{K_0,N}^{(1-t)}(\Theta_{K_0}(A,x))^N\,\mm(A),\qquad\forall\,t \in(0,1).
    \end{equation}
    In particular, $\mm$ is qualitatively non-degenerate in the sense of \cite{CavHue15}.
\end{proposition}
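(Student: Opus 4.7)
The plan is to apply the ${\sf CD}(k,N)$ condition to transport the normalized restriction of $\mm$ on $A$ onto an approximation of the Dirac at $x$, and to exploit that all relevant geodesics remain inside $\overline{B_{2R}(p)}$ where $k\ge K_0$, so that the variable Ricci bound effectively reduces to the constant bound $K_0$. Concretely, after reducing to $0<\mm(A)<\infty$ and $x\in{\rm supp}(\mm)$ (the inequality is otherwise trivial), for $\eps>0$ small enough that $B_\eps(x)\subset B_R(p)$ I would set $\mu_0\coloneqq \mm(A)^{-1}\mm|_A$ and $\mu_1^\eps\coloneqq \mm(B_\eps(x))^{-1}\mm|_{B_\eps(x)}$ and apply \cref{def:CDkN} with $N'=N$. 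This produces an optimal plan $\ppi_\eps\in{\rm OptGeo}(\mu_0,\mu_1^\eps)$ whose interpolant $\mu_t^\eps=\rho_t^\eps\,\mm$ is absolutely continuous and satisfies \eqref{eq:cd_condition_variable}.

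Next, a triangle-inequality computation shows that $\sfd(\gamma_s,p)\le 2R$ for every $\gamma\in{\rm supp}(\ppi_\eps)$ and every $s\in[0,1]$, so each such geodesic lies in $\overline{B_{2R}(p)}$ and consequently $k_\gamma^{\pm}(s)\ge K_0$. The monotonicity of the generalized distortion coefficients in the curvature parameter (Sturm comparison applied to the continuous approximations $k_n$ of \eqref{eq:k_n_continuous}, then preserved by the limit in \eqref{eq:sigma monotone def}) lets me replace $\tau_{k_\gamma^{\pm},N}^{(s)}$ by $\tau_{K_0,N}^{(s)}$ inside \eqref{eq:cd_condition_variable}. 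The further monotonicity of $\theta\mapsto\tau_{K_0,N}^{(s)}(\theta)$ on its domain (non-decreasing for $K_0\ge 0$, non-increasing for $K_0<0$), combined with the sign-dependent definition of $\Theta_{K_0}$, then replaces $\sfd(\gamma_0,\gamma_1)$ by $\Theta_{K_0}(A,B_\eps(x))$. Using the constant form of $\rho_0$ and $\rho_1^\eps$ on their supports and applying Jensen's inequality to the concave map $s\mapsto s^{1-1/N}$ (which bounds the left-hand side of \eqref{eq:cd_condition_variable} from above by $\mm(M_t(A,B_\eps(x)))^{1/N}$, since $\{\rho_t^\eps>0\}\subset M_t(A,B_\eps(x))$), these steps combine into
\begin{equation*}
\mm(M_t(A,B_\eps(x)))^{1/N}\ge \tau_{K_0,N}^{(1-t)}\bigl(\Theta_{K_0}(A,B_\eps(x))\bigr)\,\mm(A)^{1/N}+\tau_{K_0,N}^{(t)}\bigl(\Theta_{K_0}(A,B_\eps(x))\bigr)\,\mm(B_\eps(x))^{1/N}.
\end{equation*}

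To conclude \eqref{eq:tmidpoints_on_bounded_sets}, I would drop the non-negative second summand, raise both sides to the $N$-th power and send $\eps\downarrow 0$. On the right, continuity of $\tau_{K_0,N}^{(1-t)}(\cdot)$ together with $\Theta_{K_0}(A,B_\eps(x))\to \Theta_{K_0}(A,x)$ delivers the desired limit. On the left, a compactness argument in ${\rm Geo}(\X)$ identifies $\bigcap_{\eps>0}M_t(A,B_\eps(x))$ with $M_t(\overline A,x)$, and an inner approximation of $A$ by closed subsets absorbs the $\overline A$-vs-$A$ discrepancy. Finally, the qualitative non-degeneracy statement in the sense of \cite{CavHue15} is a direct consequence of \eqref{eq:tmidpoints_on_bounded_sets}, since on any bounded set $B=B_R(p)$ and for $t$ in a compact subinterval of $(0,1)$ the constant $\tau_{K_0,N}^{(1-t)}(\Theta_{K_0}(A,x))^N$ is uniformly bounded below by a positive quantity depending only on $R,t,K_0,N$.

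The hard part will be the passage $\eps\downarrow 0$: the sets $M_t(A,B_\eps(x))$ are not a priori Borel, so one has to argue by inner/outer approximation with closed sets and verify that no loss of constants occurs in the limiting inequality. By contrast, the monotonicity and entropy manipulations are essentially a direct transcription of the standard ${\sf CD}(K_0,N)$ proof of non-degeneracy, once the monotonicity of the generalized distortion coefficients \eqref{eq:sigma monotone def} with respect to the curvature parameter has been verified.
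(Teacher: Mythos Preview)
Your proposal is correct and follows essentially the same approach as the paper: approximate $\{x\}$ by small balls, use that all transport geodesics remain in $\overline{B_{2R}(p)}$ where $k\ge K_0$ to reduce to constant-curvature distortion coefficients, then pass to the limit via Ascoli--Arzel\`a after restricting to compact $A$. The only cosmetic differences are that the paper invokes the Brunn--Minkowski inequality from \cite[Thm.\ 5.1]{Ket17} rather than deriving it via the entropy--Jensen computation, and that it works with ${\rm supp}((\e_t)_\sharp\ppi_n)$ (which is automatically closed, sidestepping the measurability issue you flag) rather than with $M_t(A,B_\eps(x))$ directly.
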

\begin{proof}
    Fix a ball $B := B_R(p)$. Consider a Borel set $A \subset B$ and $x \in B$. We can assume without loss of generality that $\mm(A) >0$  (as, if $\mm(A)=0$, there is nothing to prove). We would like to apply the Brunn-Minkowski inequality in ${\sf CD}(k,N)$ obtained in \cite[Thm.\ 5.1]{Ket17} with $A_0=A, A_1=\{x\}$, but $\mm(A_1)$ is required to be of positive measure. We thus perform a standard approximation step. Fix any $t \in (0,1)$. Let us assume $A_0\coloneqq A$ to be compact and $\epsilon_n\downarrow 0$, set $A_{1,n} \coloneqq  B_{\epsilon_n}(x)$ so that, for $n$ large enough, $A_{1,n}\subset B$. Then, by the argument of \cite[Thm.\ 5.1]{Ket17}, there is a $\ppi_n \in {\rm OptGeo}\big(\frac{\mm_{A_0}}{\mm(A_0)},\frac{\mm_{A_{1,n}}}{\mm(A_{1,n})}\big)$ satisfying the curvature dimension condition such that:
    \begin{equation}\label{eq:BM A_0 compact to x}
        \mm({\rm supp}\big( (\e_t)_\sharp\ppi_n) ^{\frac{1}{N}} \ge \inf_{\gamma \in {\rm Geo}(A_0,A_{1,n})} \tau^{(1-t)}_{k_{\gamma}^-,N}(\sfd(\gamma_0,\gamma_1))\,\mm(A_0)^{\frac{1}{N}} \ge   \tau^{(1-t)}_{K_0,N}(\Theta_{K_0}(A_0,A_{1,n}))\,\mm(A_0)^{\frac{1}{N}}, 
    \end{equation}
    for all $t \in (0,1)$, having used the monotonicity of $\tau$ coefficients \cite[Cor.\ 3.11]{Ket17} in the last inequality and the definition of $\Theta_{K_0}$. Set
    \[
    A_t \coloneqq  \bigcap_{n\in\N} \bigcup_{k\ge n} {\rm supp}\big((\e_t)_\sharp\ppi_k\big).
    \]
    We claim that 
    \[
    A_t \subset M_t(A,x).
    \]
    Let $y \in A_t$ and, by definition, for all $n \in\N$ there is a geodesic $\gamma^ {h_n} \in {\rm Geo}(A_0,A_{1,h_n})$ so that $\gamma_t^{h_n} =y$ with $h_n\uparrow \infty$. By Ascoli-Arzel\'a (all curves are constant-speed geodesics contained in $2B$ which has compact closure, since $(\X,\sfd)$ is proper, according to \cite[Thm.\ 5.3]{Ket17}), there is a geodesic $\gamma:[0,1]\to\X$ which is a uniform limit of $\gamma^{h_n}$. The geodesic $\gamma$ satisfies $\gamma_t = y, \gamma_0 \in A_0$ (since $A_0$ is assumed compact) and $\gamma_1=x$.  This proves the claim and, using that $\Theta_{K_0}(A_0,A_{1,n})\to \Theta_{K_0}(A,x)$, it holds
    \begin{equation}
        \mm(M_t(A,x)) ^{\frac{1}{N}} \ge   \tau^{(1-t)}_{K_0,N}(\Theta_{K_0}(A,x))\,\mm(A)^{\frac{1}{N}},  
    \label{eq:step A compact}
    \end{equation}
    provided $A$ is compact. However, \eqref{eq:step A compact} also holds for an arbitrary Borel set $A\subset B$ by approximation with compact sets $A_n\subset A$ with $\mm(A_n)\uparrow \mm(A)$ (by inner regularity) and using $M_t(A_n,x)\subset M_t(A,x)$ and $\tau^{(1-t)}_{K_0,N}(\Theta_{K_0}(A_n,x)) \ge \tau^{(1-t)}_{K_0,N}(\Theta_{K_0}(A,x))$ for each $n\in\N$.

    Finally, to conclude that $\mm$ is qualitatively non-degenerate, we observe that if $K_0>0$, then $\Theta_{K_0}(A,x)\geq 0$ and $\tau^{(1-t)}_{K_0,N}(\Theta_{K_0}(A,x))\geq (1-t)^N$, while if $K_0\leq 0$, then $\Theta_{K_0}(A,x)\leq 2R$ and we have $\tau^{(1-t)}_{K_0,N}(\Theta_{K_0}(A,x))\geq \tau^{(1-t)}_{K_0,N}(2R)^N$. Hence, we have that 
\begin{equation}
    \mm(M_t(A,x)) \ge f_{R,p}(t)\,\mm(A),\qquad\forall\,t\in(0,1),
\end{equation}
with $f_{R,p}(t) \coloneqq \tau^{(1-t)}_{K_0^+,N}(2R)^N$. The proof is therefore concluded recalling \cite[Assumption 1]{CavHue15}.
\end{proof}
\subsection{Non-branching and existence of transport maps}
We recall the definition of an essentially non-branching metric measure space.
\begin{definition}
    Let $(\X,\sfd,\mm)$ be a metric measure space. A set $G \subset { \rm Geo}(\X)$ is called \emph{non-branching} if for any $\gamma^1,\gamma^2 \in G$ the following holds: if $\gamma^1\restr{[0,t]}=\gamma^2\restr{[0,t]}$ for some $t\in (0,1)$, then $\gamma^1= \gamma^2$. We say that $(\X,\sfd,\mm)$ is called \emph{essentially non-branching} if for all $\mu_0,\mu_1 \in \mathcal P_2(\X)$, any $\nu \in {\rm OptGeo}(\mu_0,\mu_1)$ is concentrated on a non-branching Borel set $G \subset { \rm Geo}(\X)$.
\end{definition}
By a combination of Proposition \ref{prop:qualit_non_deg} with \cite[Thm.\ 5.8, Cor.\ 5.9]{Kell17}, we deduce the existence and uniqueness of optimal transport plans. 
\begin{proposition}
\label{prop:existence_uniqueness}
    Let $(\X,\sfd,\mm)$ be an essentially non-branching ${\sf CD}(k,N)$ space, for some $k:\X\to\R$ admissible and $N>1$. Then, for any $\mu_0,\mu_1 \in \mathcal{P}_2(\X)$ with $\mu_0 \ll \mm$ there is a unique optimal transport plan with respect to $W_2$ and it is induced by a map.
\end{proposition}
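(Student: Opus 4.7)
The plan is to invoke the general structural results of \cite{Kell17}, which guarantee existence and uniqueness of $W_2$-optimal plans under two abstract assumptions: the ambient space is essentially non-branching, and the reference measure is qualitatively non-degenerate in the sense of \cite[Assumption 1]{CavHue15}. The first assumption is exactly part of the hypothesis of the proposition. For the second, I shall apply Proposition \ref{prop:qualit_non_deg}, which shows that any ${\sf CD}(k,N)$ space with admissible $k\colon \X\to\R$ and $N>1$ satisfies the required non-degeneracy, with explicit local bound $f_{R,p}(t)=\tau^{(1-t)}_{K_0^+,N}(2R)^N$ on each ball $B_R(p)$.

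With these two ingredients in place, the conclusion follows by applying \cite[Thm.\ 5.8]{Kell17} to obtain uniqueness of any $W_2$-optimal plan between $\mu_0$ and $\mu_1$ whenever $\mu_0\ll\mm$, and then \cite[Cor.\ 5.9]{Kell17} to upgrade such plan to one induced by a Borel transport map. The existence of at least one $W_2$-optimal plan is a standard fact, since $(\X,\sfd)$ is Polish (in fact proper, by \cite[Thm.\ 5.3]{Ket17}) and the marginals have finite second moment, so this part does not require any additional argument.

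No real obstacle is anticipated in carrying out the above plan: once qualitative non-degeneracy has been established in Proposition \ref{prop:qualit_non_deg}, the only verification needed is that the local function $f_{R,p}$ furnished there fits the precise regularity required by \cite[Assumption 1]{CavHue15}. This is immediate, since $f_{R,p}$ depends only on the chosen ball and is strictly positive on $(0,1)$. Thus the proof reduces to a direct citation, and all of the substantive content lies in the preceding Proposition \ref{prop:qualit_non_deg}, where the Brunn--Minkowski-type estimate \eqref{eq:tmidpoints_on_bounded_sets} was established.
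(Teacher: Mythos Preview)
Your proposal is correct and matches the paper's own argument: the paper likewise deduces the result by combining Proposition~\ref{prop:qualit_non_deg} (qualitative non-degeneracy) with \cite[Thm.\ 5.8, Cor.\ 5.9]{Kell17}, exactly as you outline.
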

The above proposition implies the following equivalent characterization of the ${\sf CD}$ condition. The proof can be done as \cite[Prop.\ 3.1]{GigliRajalaSturm}, and \cite{Sturm06II} taking into account the necessary modifications.
\begin{proposition}
\label{prop:cd_curvewise}
Let $(\X,\sfd,\mm)$ be an essentially non-branching metric measure space and let $N>1$ and $k \colon \X \to \R$ be admissible. Then,  $(\X,\sfd,\mm)$ satisfies the ${\sf CD}(k,N)$ condition if and only for $\mu_0,\mu_1 \in \mathcal{P}_2(\X)$, there exists $\ppi \in {\rm OptGeo}(\mu_0,\mu_1)$ such that
\begin{equation}
    \label{eq:curvewise_characterization_CD_variable}
    \rho_t(\gamma_t)^{-\frac{1}{N}} \ge \tau_{k^{
-}_{\gamma},N'}^{(1-t)}(\sfd(\gamma_0,\gamma_1))\rho_0\left(\gamma_0\right)^{-\frac{1}{N'}}+\tau_{k^{+}_{\gamma},N'}^{(t)}(\sfd(\gamma_0,\gamma_1))\rho_1\left(\gamma_1\right)^{-\frac{1}{N'}}
\end{equation}
for all $t \in [0,1]$ for $\ppi$-a.e.\ $\gamma$, where $({\e_t})_\sharp\ppi=\rho_t\mm$.
\end{proposition}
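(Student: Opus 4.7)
The plan is to establish the two implications separately, following the template of \cite[Prop.~3.1]{GigliRajalaSturm} (and \cite{Sturm06II}), and tracking how the variable bound $k$ enters only through the distortion coefficient $\tau_{k_\gamma, N'}$, which is intrinsic to each individual geodesic.

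For the easy direction \((\Leftarrow)\), assume the curvewise characterization holds and fix marginals $\mu_0,\mu_1\in\mathcal P_2(\X)$ with bounded support. Let $\ppi$ be the plan provided by the hypothesis; integrating the pointwise inequality against $\ppi$, and using $(\e_t)_\sharp\ppi=\rho_t\mm$, the left-hand side becomes
\[
\int \rho_t(\gamma_t)^{-1/N'}\,\d\ppi(\gamma) \;=\; \int \rho_t^{\,1-1/N'}\,\d\mm \;=\; -S_{N'}(\mu_t),
\]
and reversing sign recovers \eqref{eq:cd_condition_variable}. In particular, finiteness of $\rho_t^{-1/N'}$ along $\ppi$-a.e.\ $\gamma$ forces $\mu_t\ll\mm$.

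For the nontrivial direction \((\Rightarrow)\), fix $\mu_0,\mu_1\ll\mm$ with bounded densities and bounded supports. By \cref{prop:existence_uniqueness} the optimal plan $\ppi\in{\rm OptGeo}(\mu_0,\mu_1)$ is unique and can be taken concentrated on a Borel non-branching set $G$ on which each evaluation map $\e_t$, $t\in[0,1]$, is injective. For every Borel $A\subset G$ with $\ppi(A)>0$, set $\ppi_A\coloneqq\ppi|_A/\ppi(A)$ with marginals $\mu_i^A=\rho_i^A\mm$. By injectivity of $\e_i|_G$ one has $\rho_i^A(\gamma_i)\ppi(A)=\rho_i(\gamma_i)$ for $\ppi$-a.e.\ $\gamma\in A$ at $i=0,1$, and the same identity at intermediate time $t$ follows from non-branching applied to the disintegration of $\ppi$ along $\e_t$. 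Uniqueness of optimal plans forces $\ppi_A$ itself to be the plan for which the CD inequality holds between $\mu_0^A$ and $\mu_1^A$. Substituting the density identities, the factor $\ppi(A)^{1/N'-1}$ cancels on both sides and one obtains the localized inequality
\[
\int_A \rho_t(\gamma_t)^{-1/N'}\,\d\ppi \;\geq\; \int_A\!\left[\tau_{k^-_\gamma,N'}^{(1-t)}(\sfd(\gamma_0,\gamma_1))\,\rho_0(\gamma_0)^{-1/N'} + \tau_{k^+_\gamma,N'}^{(t)}(\sfd(\gamma_0,\gamma_1))\,\rho_1(\gamma_1)^{-1/N'}\right]\d\ppi.
\]
Since $A\subset G$ is arbitrary, this forces \eqref{eq:curvewise_characterization_CD_variable} for $\ppi$-a.e.\ $\gamma$ at each fixed $t$; running the argument over a countable dense set of times and invoking the continuity of $t\mapsto\tau_{k_\gamma,N'}^{(\cdot)}(\sfd(\gamma_0,\gamma_1))$ promotes the inequality to all $t\in[0,1]$ on a common full-measure set of geodesics.

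The main obstacle is the intermediate-time density identity $\rho_t^A(\gamma_t)\ppi(A)=\rho_t(\gamma_t)$ on $A$: this is exactly the step where essential non-branching is used, and it requires combining the disintegration of $\ppi$ along $\e_t$ with the injectivity of $\e_t|_G$, as in \cite{GigliRajalaSturm}. The variable bound does not create new difficulties here, since $\tau_{k_\gamma,N'}$ depends only on $\gamma$ and is insensitive to the restriction to $A$. Removing the boundedness assumptions on the densities and supports of $\mu_0,\mu_1$ is achieved by standard truncation and a monotone exhaustion in $A$.
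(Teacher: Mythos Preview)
Your proposal is correct and follows precisely the approach the paper indicates: the paper's own ``proof'' consists solely of the sentence ``The proof can be done as \cite[Prop.\ 3.1]{GigliRajalaSturm}, and \cite{Sturm06II} taking into account the necessary modifications,'' and your outline is exactly that argument, with the correct observation that the variable bound enters only through $\tau_{k_\gamma,N'}$, which is geodesic-wise and hence stable under restriction to subsets $A$.
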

\section{Localization of variable Ricci curvature bounds}
\label{sec:localization}
In this section, we show that a variable Ricci lower bound can be localized to the rays of the disintegration of $\mm$ relative to a 1-Lipschitz function. We adapt the strategy for constant curvature bound developed in \cite{Cav14,CavallettiMondino17-Inv} (for spaces of finite mass) and in \cite{CavallettiMondino20} (for spaces of infinite mass).

\subsection{Disintegration of the reference measure}\label{sec:disintegration} 
Let $u \colon \X \to \mathbb{R}$ be a $1$-Lipschitz function. We denote by $\pi_1 \colon \X \times \X \to \X$ as $\pi_1(x,y)\coloneqq x$.
We define $\Gamma_u = \{(x,y): u(x)-u(y)=\sfd(x,y)\}$, $\Gamma^{-1}_u\coloneqq \{(x,y): (y,x) \in \Gamma_u \}$ and $R_u\coloneqq \Gamma_u \cup \Gamma_u^{-1}$. Moreover, we denote by $\Gamma_u(x)\coloneqq \{y\in\X:(x,y)\in\Gamma_u\}$ and, similarly, $\Gamma^{-1}_u(x)\coloneqq \{y\in\X:(x,y)\in\Gamma^{-1}_u\}$. We call $R_u$ the transport relation. We recall the definition of transport set with endpoints $T_u \coloneqq  \pi_1 \big(R_u \setminus \{(x,y)\in \X\times \X:x=y\}\big)$.
We say that, given $x,y \in T_u$, $x \sim y$ if and only if $(x,y)\in R_u$. This relation is reflexive and symmetric but not transitive in general. Thus, we define the set of forward and backward branching points as 
\begin{equation}
    A^+ \coloneqq \{x \in T_u : \exists y,z \in \Gamma_u(x), (y,z)\notin R_u\},\quad A^- \coloneqq \{x \in T_u : \exists y,z \in \Gamma_u^{-1}(x), (y,z)\notin R_u\}.
\end{equation}
Consider, respectively, the non-branched transport set and the non-branched transport relation 
\begin{equation}
    T_u^{nb}\coloneqq  T_u \setminus (A^+ \cup A^-), \qquad R_u^{nb}\coloneqq  R_u \cap ( T_u^{nb} \times  T_u^{nb}).
\end{equation}
As shown in \cite{Cav14}, $R_u^{nb}$ is an equivalence relation on $T_u^{nb}$ and for every $x\in T_u^{nb}$, $R_u(x)\coloneqq  \{y\in \X : (x,y)\in R_u\}$ is isometric to a closed interval of $\R$. From the non-branched transport relation we obtain a partition of the non-branched transport set $T_u^{nb}$ into a disjoint family $\{\X_q\}_{q \in Q}$ of sets, where $Q$ is a set of indices. Moreover, $\bar \X_q$ is isometric to a closed interval of $\R$, for any $q\in Q$. We define the quotient map $\mathfrak{Q} \colon T_u^{nb} \to Q$ as 
\begin{equation}
    q=\mathfrak{Q}(x) \qquad\Longleftrightarrow\qquad x \in \X_q.
\end{equation}
We endow $Q$ with the pushforward $\sigma$-algebra, i.e.\ the finest $\sigma$-algebra on $Q$ for which $\mathfrak Q$ is measur\-able. We also define the quotient measure $\mathfrak{q
}\coloneqq \mathfrak{Q}_\sharp \mm$.
As proven in \cite{Cav14b, Cav14}, we have the following.

\begin{proposition}
\label{prop:disintegration}
    Let $(\X,\sfd,\mm)$ be a metric measure space and let $u \colon \X \to \R$ be a $1$-Lipschitz function. Define $T_u^{nb}$, $Q$, $\mathfrak{Q}$, $\mathfrak{q}$ as before.
    Then, there exists a disintegration of $\mm\restr{T_u^{nb}}$, namely a family $\{\mm_q\}_{q \in Q}$ such that, for every $A \in \mathscr{B}(\X)$, $Q \ni q \mapsto \mm_q(A) \in \R$ is $\mathfrak{q}$-measurable and
    \begin{equation*}
        \mm \restr{T_u^{nb}} =\int_Q \mm_q \,\d \mathfrak{q}.
    \end{equation*}
    Additionally, the disintegration is strongly consistent, i.e. $\mm_q(\mathfrak Q^{-1}(q))=1$, $\mathfrak{q}$-a.e.\ $q \in Q$. 
    
    The family $\{\mm_q\}_{q \in Q}$ is $\mathfrak q$-essentially unique, meaning that, if there is another family $\{\tilde \mm_q\}_{q\in Q}$ of probability measures satisfying the above properties, then $\tilde \mm_q = \mm_q$ for $\mathfrak q$-a.e.\ $q\in Q$.
\end{proposition}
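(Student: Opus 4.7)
The plan is to reduce the statement to the classical disintegration theorem by equipping the quotient space $Q$ with a Polish-type measurable structure inherited from $\X$. The argument, following the strategy of Cavalletti in \cite{Cav14b, Cav14}, proceeds in four main steps.

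First, I would establish the measurable structure of the relevant sets. Since $u$ is $1$-Lipschitz and continuous, the graph $\Gamma_u = \{(x,y) \in \X\times\X : u(x)-u(y)=\sfd(x,y)\}$ is closed, so $R_u$ is closed and $T_u = \pi_1(R_u \setminus \{x=y\})$ is analytic. The branching sets $A^+, A^-$ can be expressed as projections of Borel sets encoding the defining incompatibility condition, hence they are analytic and in particular universally measurable. Consequently, $T_u^{nb}$ is universally measurable.

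Second, I would verify that $R_u^{nb}$ is an equivalence relation on $T_u^{nb}$: reflexivity and symmetry hold by definition, while transitivity follows from the absence of branching at the intermediate point. Each equivalence class $\X_q$ is then isometric to a closed interval of $\R$ via the restriction $u\restr{\X_q}$, which is a $1$-Lipschitz bijection onto its image. Using this parameterization and the Kuratowski-Ryll-Nardzewski measurable selection theorem (for instance, selecting the point of minimal $u$-value on each class when it exists, and otherwise a suitable approximation), one produces a measurable section $s\colon Q \to T_u^{nb}$ that identifies $Q$ measurably with an analytic subset of $\X$. In particular, $Q$ acquires a countably generated $\sigma$-algebra compatible with the pushforward.

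Third, with $\mathfrak{q} = \mathfrak{Q}_\sharp \mm$ being $\sigma$-finite (as $\mm$ is boundedly finite on $\X$, which is separable, and $T_u^{nb}$ can be exhausted by sets of finite $\mm$-measure), I would invoke the disintegration theorem on analytic measure spaces. This yields a family $\{\mm_q\}_{q\in Q}$ of probability measures such that $q\mapsto \mm_q(A)$ is $\mathfrak{q}$-measurable for each Borel $A$, with $\mm\restr{T_u^{nb}} = \int_Q \mm_q\,\d\mathfrak{q}$. Strong consistency, i.e.\ $\mm_q(\mathfrak{Q}^{-1}(q)) = 1$ for $\mathfrak{q}$-a.e.\ $q$, follows from the fact that the partition is realized by the concrete map $\mathfrak{Q}$ (not merely up to $\mathfrak{q}$-null sets), combined with the countable generation of the quotient $\sigma$-algebra. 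Essential uniqueness is then a standard consequence of the general disintegration theorem.

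The main obstacle is descriptive-set-theoretic in nature: one must carefully argue that the branching sets $A^\pm$ are analytic and that the quotient $Q$ embeds measurably into a standard Borel space, so that the disintegration theorem genuinely applies and delivers strong consistency rather than merely a $\mathfrak{q}$-a.e.\ decomposition. Establishing these measurability and countable-generation properties is the technical core of the argument, whereas the equivalence relation structure and the isometric identification of each ray with an interval are more routine once the geometry of transport rays for $1$-Lipschitz functions is set up.
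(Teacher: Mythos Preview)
The paper does not give its own proof of this proposition; it simply cites \cite{Cav14b, Cav14} for the result. Your proposal correctly outlines the descriptive-set-theoretic strategy from those references (measurability of the transport sets, construction of a measurable cross-section, application of the abstract disintegration theorem with countably generated quotient $\sigma$-algebra), so it is aligned with what the paper implicitly invokes.
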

In a given metric measure space, the non-branched transport set $T_u^{nb}$ can be smaller than $T_u$, i.e.\ $\mm(T_u \setminus T_u^{nb}) >0$. The next result gives sufficient conditions under which this behavior is excluded.
\begin{proposition}[{\cite[Prop.\ 4.5]{Cav17}}]
    Let $(\X,\sfd,\mm)$ be a metric measure space such that for any $\mu_0,\mu_1 \in \mathcal{P}_2(\X)$ with $\mu_0 \ll \mm$ any optimal transference plan for $W_2$ is induced by a map. Then $\mm(A_+)=\mm(A_-)=0$. In particular, $\mm(T_u \setminus T_u^{nb})=0$.
\end{proposition}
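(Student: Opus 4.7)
The strategy is to argue by contradiction and manufacture a single $W_2$-optimal plan between an absolutely continuous source and some target that is \emph{not} induced by a map, directly contradicting the hypothesis. Suppose $\mm(A^+)>0$ (the case $\mm(A^-)>0$ is symmetric, obtained by replacing $u$ with $-u$, which swaps $\Gamma_u$ and $\Gamma_u^{-1}$). The final claim $\mm(T_u\setminus T_u^{nb})=0$ follows from this and the definition $T_u^{nb}=T_u\setminus(A^+\cup A^-)$.

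I would first perform a measurable selection. The set $\{(x,y,z):(x,y),(x,z)\in\Gamma_u,\ (y,z)\notin R_u\}$ is Borel in $\X^3$ (since $\Gamma_u$ is closed and $R_u^c$ is open), and its projection contains $A^+$. By a Jankov--von Neumann selection, I get Borel maps $\tilde Y_1,\tilde Y_2\colon A^+\to\X$ with $\tilde Y_i(x)\in\Gamma_u(x)$ and $(\tilde Y_1(x),\tilde Y_2(x))\notin R_u$. Because $\Gamma_u(x)$ is isometric to an interval in the $u$-direction, any point along the ray at any admissible distance from $x$ still belongs to $\Gamma_u(x)$. Exhausting by rationals, I can pick a small $r>0$ and a Borel bounded subset $B\subset A^+$ of positive $\mm$-measure on which the truncated maps $Y_1,Y_2\colon B\to\X$ satisfy $Y_i(x)\in\Gamma_u(x)$, $\sfd(x,Y_i(x))=r$, and $Y_1(x)\neq Y_2(x)$. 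The subtlety---the main technical obstacle---is ensuring that truncation does not collapse $Y_1(x)=Y_2(x)$; when that happens, the collapsing point is itself in $A^+$, and one iterates the selection on a measurable subset where the two ray-segments already separate before distance $r$.

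Now set $\mu_0\coloneqq \mm\mres B/\mm(B)$ (absolutely continuous with bounded support, hence in $\mathcal{P}_2(\X)$) and define
\[
\pi\coloneqq \tfrac{1}{2}(\mathrm{Id},Y_1)_\sharp\mu_0+\tfrac{1}{2}(\mathrm{Id},Y_2)_\sharp\mu_0,
\]
whose first marginal is $\mu_0\ll\mm$ and whose second marginal $\mu_1$ lies in $\mathcal{P}_2(\X)$. The plan $\pi$ is manifestly not induced by a map: its disintegration at $\mu_0$-a.e.\ $x\in B$ is $\tfrac{1}{2}\delta_{Y_1(x)}+\tfrac{1}{2}\delta_{Y_2(x)}$, which is not Dirac since $Y_1(x)\neq Y_2(x)$. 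It remains only to verify that $\pi$ is $W_2$-optimal by checking $\sfd^2$-cyclical monotonicity of its support. For any $(x_1,y_1),\dots,(x_n,y_n)\in\mathrm{supp}(\pi)$ and any permutation $\sigma$, using $u(y_i)=u(x_i)-r$ (since $y_i\in\{Y_1(x_i),Y_2(x_i)\}\subset\Gamma_u(x_i)$ at distance $r$) together with the $1$-Lipschitzness of $u$, and setting $a_i\coloneqq u(x_i)-u(x_{\sigma(i)})$ so that $\sum_i a_i=0$, we get
\[
\sum_i\sfd(x_i,y_{\sigma(i)})^2\,\ge\,\sum_i\bigl(u(x_i)-u(y_{\sigma(i)})\bigr)^2=\sum_i(a_i+r)^2=\sum_i a_i^2+n r^2\,\ge\,n r^2=\sum_i\sfd(x_i,y_i)^2.
\]
Thus $\pi$ is $\sfd^2$-cyclically monotone with finite transport cost, hence $W_2$-optimal between $\mu_0$ and $\mu_1$. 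Since $\mu_0\ll\mm$, this contradicts the standing hypothesis, so $\mm(A^+)=0$, completing the proof.
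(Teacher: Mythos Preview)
The paper does not supply its own proof of this proposition; it simply cites \cite[Prop.\ 4.5]{Cav17}. Your argument is precisely the standard one from that reference: produce, by measurable selection on $A^+$, two branching targets, truncate to a common distance $r$, and check that the resulting two-valued plan is $\sfd^2$-cyclically monotone via the elementary inequality $\sum(a_i+r)^2\ge nr^2$ when $\sum a_i=0$. The overall strategy and the cyclical-monotonicity computation are correct.

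Two technical remarks. First, your sentence ``$\Gamma_u(x)$ is isometric to an interval in the $u$-direction'' is false as stated: for $x\in A^+$ the set $\Gamma_u(x)$ is by definition \emph{not} totally ordered by $R_u$ --- that is exactly what branching means. What you actually need (and what is true) is that any geodesic from $x$ to a point of $\Gamma_u(x)$ stays inside $\Gamma_u(x)$, which follows directly from the triangle inequality and the definition of $\Gamma_u$. Second, the Jankov--von Neumann selection yields universally measurable rather than Borel maps; this is harmless here, but in the cited reference one typically passes through Lusin's theorem to get continuous selections on a compact set of positive measure, which also cleans up the truncation step you flag as the ``main technical obstacle''. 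With these adjustments your proof is complete and matches the cited source.
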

Since, for essentially non-branching ${\sf CD}(k,N)$ spaces, every optimal plan is induced by a map, cf. Proposition \ref{prop:existence_uniqueness}, we easily obtain the following corollary. 
\begin{corollary}
\label{coro:existence_optimal_maps}
Let $(\X,\sfd,\mm)$ be an essentially non-branching ${\sf CD}(k,N)$ space for some $k\colon \X\to\R$ admissible and $N>1$. Then, it holds $\mm(A^+)=\mm(A^-)=0$. In particular $\mm(T_u \setminus T_u^{nb})=0$.
\end{corollary}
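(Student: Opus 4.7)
The plan is to deduce this as a direct consequence of the two propositions stated immediately before. The statement bundles together the existence of optimal transport maps (Proposition \ref{prop:existence_uniqueness}) with the structural result on branching points recalled from \cite[Prop.\ 4.5]{Cav17}.

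First I would invoke Proposition \ref{prop:existence_uniqueness}: under the hypotheses on $(\X,\sfd,\mm)$ of essentially non-branching ${\sf CD}(k,N)$ with $k$ admissible and $N>1$, for every pair $\mu_0,\mu_1\in\mathcal{P}_2(\X)$ with $\mu_0\ll\mm$ the unique $W_2$-optimal transport plan from $\mu_0$ to $\mu_1$ is induced by a map. This is precisely the hypothesis required by \cite[Prop.\ 4.5]{Cav17}.

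Then I would apply \cite[Prop.\ 4.5]{Cav17} to conclude $\mm(A^+)=\mm(A^-)=0$. The last assertion follows by noting that, by definition, $T_u^{nb}=T_u\setminus(A^+\cup A^-)$, so
\begin{equation*}
    \mm(T_u\setminus T_u^{nb}) \le \mm(A^+)+\mm(A^-) = 0,
\end{equation*}
which gives the claim. There is no real obstacle here: the work has already been done in Proposition \ref{prop:existence_uniqueness} (whose nontrivial input is qualitative non-degeneracy from Proposition \ref{prop:qualit_non_deg} combined with Kell's result) and in the cited \cite[Prop.\ 4.5]{Cav17}, so the corollary is simply a combination of the two.
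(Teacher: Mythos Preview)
Your proposal is correct and matches the paper's approach exactly: the paper simply remarks that the corollary follows by combining Proposition~\ref{prop:existence_uniqueness} with \cite[Prop.\ 4.5]{Cav17}, which is precisely what you do.
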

\subsection{Regularity of the disintegration}
\label{sec:regularity_disintegration}
Next, we discuss the regularity of conditional measures $\mm_q$, namely their absolute continuity with respect to the one-dimensional Hausdorff measure restricted to $\X_q$. Firstly, consider the set $S\coloneqq \{ (q,t,x) \in Q \times [0,\infty) \times T_u^{nb} \colon (q,x) \in \Gamma_u,\, u(q)-u(x)=t \} \cup \{ (q,t,x) \in Q \times (-\infty,0] \times T_u^{nb} \colon (x,q) \in \Gamma_u,\, u(x)-u(q)=t \}$. Then, we define the ray map
\begin{equation}
    g \colon {\rm Dom}(g) \subset Q \times \mathbb{R} \to T_u^{nb},
\end{equation}
as the map whose graph in $Q\times\R\times T_u^{nb}$ is given by the set $S$, namely such that ${\rm graph}(g)=S$.
\begin{proposition}\label{prop:mq are AC}
    Let $(\X,\sfd,\mm)$ be an essentially non-branching ${\sf CD}(k,N)$ space for some $k\colon \X\to\R$ admissible and $N>1$. Given a $1$-Lispchitz function $u \colon \X\to\R$, consider the disintegration $\{\mm_q\}_{q\in Q}$ of $\mm\restr{T_u^{nb}}$ relative to $u$. Then, for $\frq$-a.e.\ $q\in Q$, $\mm_q\ll g(q,\cdot)_\sharp\Leb^1$.
\end{proposition}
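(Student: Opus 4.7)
The strategy is to reduce the variable-curvature statement to the constant-curvature regularity result for conditional measures along transport rays proved in \cite{Cav14,CavHue15}, by exploiting qualitative non-degeneracy. The key point is that, although $k$ is only locally bounded below, the disintegration is a local object in $Q$, so bounds on $k$ over bounded regions of $\X$ are sufficient.

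First I would localize: fix $p_0 \in \X$ and, for each $n \in \N$, set $B_n \coloneqq B_n(p_0)$. Since $k$ is admissible, there exists $K_n \in \R$ with $k \ge K_n$ on $\overline{B_{2n}}$, and by monotonicity of the $\tau$ coefficients (cf.\ \cite[Cor.\ 3.11]{Ket17}), transports with marginals supported in $B_n$ satisfy the classical constant-curvature ${\sf CD}(K_n,N)$ inequality. Next, by \cref{prop:qualit_non_deg}, the reference measure $\mm$ is qualitatively non-degenerate in the sense of \cite{CavHue15}: on each ball $B_n$, estimate \eqref{eq:tmidpoints_on_bounded_sets} provides the required $t$-midpoint lower bound with a universal function $f_{n,p_0}(t)$. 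Combined with essential non-branching, this is precisely the setting in which \cite[Thm.\ 7.10]{CavHue15} (based on earlier work \cite{Cav14, Cav14b}) applies, yielding that for any $1$-Lipschitz function $u$ the conditional measures of the disintegration along the rays of $R_u^{nb}$ are absolutely continuous with respect to the one-dimensional Hausdorff measure on $\X_q$, equivalently with respect to $g(q,\cdot)_\sharp \Leb^1$.

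To conclude, I would apply this local result inside each $B_n$: decompose $Q = \bigcup_n Q_n$ where $Q_n \coloneqq \{q \in Q : \X_q \cap B_n \ne \emptyset\}$, and observe that the disintegration of $\mm \restr{T_u^{nb} \cap B_n}$ relative to $u$ inherits, via essential uniqueness in \cref{prop:disintegration}, the conditional measures $\mm_q \restr{B_n}$. Absolute continuity on each bounded piece lifts to absolute continuity of $\mm_q$ on $\X_q$ for $\frq$-a.e.\ $q \in Q_n$, and a countable union yields the full statement. The only delicate point—and the main obstacle—is checking that the Cavalletti--Huesmann machinery, originally formulated for constant ${\sf CD}$ bounds, genuinely requires only the qualitative non-degeneracy plus essential non-branching established here; the variable lower bound plays no further role once \cref{prop:qualit_non_deg} is in hand, since the proof of absolute continuity proceeds through a Vitali-type covering argument applied to midpoint sets, which is insensitive to the precise form of the curvature bound.
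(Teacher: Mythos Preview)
Your proposal is correct and identifies the same key ingredients as the paper: qualitative non-degeneracy (\cref{prop:qualit_non_deg}) together with essential non-branching suffice to run Cavalletti's argument for absolute continuity of the conditional measures, and the variable bound $k$ plays no further role once these are in place. The paper's proof is in fact just a one-line remark: replicate \cite[Thm.\ 6.6]{Cav14} taking into account \cref{prop:qualit_non_deg} and \cref{prop:existence_uniqueness}; your explicit localization to balls $B_n$ with $k\ge K_n$ is not wrong but is redundant, since \cref{prop:qualit_non_deg} already yields the midpoint lower bound on every bounded set, and Cavalletti's Vitali-type covering argument is itself local.
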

We omit the proof of this result as it goes along the same lines of the case of constant lower curvature bounds. More precisely, one replicates the proof of \cite[Thm.\ 6.6]{Cav14}, taking into account Propositions \ref{prop:qualit_non_deg} and \ref{prop:existence_uniqueness}. 
\begin{remark}
    \label{rmk:ray_charles}\rm
    As observed in \cite[Prop.\ 4.12]{Cav17}, for every $q\in Q$, the map $t\mapsto g(q,t)$ is an isometry between ${\rm Dom}(g(q,\cdot))$ and $\X_q$. Moreover, $\overline{{\rm Dom}(g(q,\cdot))}=[0,r_q]$, where $r_q\coloneqq \ell(\X_q)\in [0,\infty]$. Since, by Proposition \ref{prop:mq are AC}, $\mm_q\ll g(q,\cdot)_\sharp\Leb^1$, for $\mathfrak q$ a.e.\ $q\in Q$, there exists a Borel function $h_q:[0,r_q]\to [0,\infty]$ such that $\mm_q=g(q,\cdot)_\sharp(h_q \Leb^1)$.\fr
\end{remark}
\subsection{Localization of variable curvature bounds}
We finally show that the rays of the disintegration $\{\mm_q\}_{q\in Q}$ of $\mm$ relative to a $1$-Lipschitz function inherit the variable Ricci lower bound. The proof follows the same lines of \cite[Thm.\ 4.2]{CavallettiMondino17-Inv} and we outline the main differences. In the statement below, recall the definition of $h_q$ and $r_q$ from Remark \ref{rmk:ray_charles}.
\begin{theorem}\label{thm:localization}
    Let $(\X,\sfd,\mm)$ be an essentially non-branching ${\sf CD}(k,N)$ space for some $k\colon \X\to\R$ admissible and $N>1 $. 
    Given a $1$-Lispchitz function $u \colon \X\to\R$, consider the disintegration $\{\mm_q\}_{q\in Q}$ of $\mm\restr{T_u^{nb}}$ relative to $u$. Then, for $\frq$-a.e.\ $q$, $([0,r_q],|\cdot|,h_q  \Leb^1)$ is a ${\sf CD}(k\circ g(q,\cdot),N)$ space.
\end{theorem}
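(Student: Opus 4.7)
The plan is to follow the localization scheme of Cavalletti--Mondino \cite{CavallettiMondino17-Inv}, replacing the role of the constant lower bound by the ray-wise variable lower bound $k\circ g(q,\cdot)$. The key observation is that along any geodesic $\gamma$ contained in a single ray $\X_q$, the function $k_\gamma$ appearing in the curvewise characterization of ${\sf CD}(k,N)$ (Proposition \ref{prop:cd_curvewise}) coincides precisely with $k\circ g(q,\cdot)$ composed with the appropriate arclength parametrization of a subinterval of $[0,r_q]$.

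First I would reduce the statement to showing that, for $\mathfrak q$-a.e.\ $q\in Q$, the one-dimensional space $([0,r_q],|\cdot|,h_q\Leb^1)$ satisfies the curvewise ${\sf CD}(k\circ g(q,\cdot),N)$ inequality for a suitable countable family of test marginals (from which the full ${\sf CD}$ condition follows by density and Proposition \ref{prop:cd_curvewise} applied to the 1D space). Given test marginals $\mu_{0,q},\mu_{1,q}$ with bounded support and absolutely continuous with respect to $h_q\Leb^1$ on $[0,r_q]$, I would transport them through the ray map $g(q,\cdot)$ to obtain marginals $\tilde\mu_{i,q}$ concentrated on $\X_q$ and absolutely continuous with respect to $\mm_q$. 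Then I would integrate these against a suitable reference density on $Q$ to produce global marginals $\mu_0,\mu_1\in\mathcal P_2(\X)$ with $\mu_i\ll\mm$, whose disintegrations along the rays coincide with $\tilde\mu_{i,q}$. Because $u$ is $1$-Lipschitz with $u(x)-u(y)=\sfd(x,y)$ along each ray, the monotone rearrangement performed ray-by-ray yields a $W_2$-optimal transference plan, which is the unique optimal plan by Proposition \ref{prop:existence_uniqueness}. In particular, the associated optimal geodesic plan $\ppi$ is concentrated on geodesics lying inside individual rays.

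Next, applying the curvewise inequality of Proposition \ref{prop:cd_curvewise} to $\ppi$ yields a pointwise lower bound for the density $\rho_t$ of $(\e_t)_\sharp\ppi$ along $\ppi$-a.e.\ geodesic $\gamma\subset \X_{\mathfrak Q(\gamma_0)}$ in terms of $\tau_{k_\gamma^{\mp},N}$ and the initial and final densities. Disintegrating this inequality along $\mathfrak Q$ and invoking the essential uniqueness of the disintegration in Proposition \ref{prop:disintegration} together with the absolute continuity $\mm_q\ll g(q,\cdot)_\sharp\Leb^1$ from Proposition \ref{prop:mq are AC}, I would identify the ray-wise pushed-forward densities of $\mu_0,\mu_t,\mu_1$ with the corresponding densities in the 1D space $([0,r_q],|\cdot|,h_q\Leb^1)$. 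Using the identity $k_\gamma=k\circ g(q,\cdot)$ for geodesics inside $\X_q$, the resulting inequality is exactly the 1D curvewise ${\sf CD}(k\circ g(q,\cdot),N)$ bound for the chosen test marginals.

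The main obstacle is measurability and exceptional-set management: the conclusion must hold for \emph{all} admissible test configurations on $[0,r_q]$ simultaneously for $\mathfrak q$-a.e.\ $q$, whereas the above scheme produces a $\mathfrak q$-negligible exceptional set for each fixed pair $\mu_{0,q},\mu_{1,q}$. As in \cite[Thm.\ 4.2]{CavallettiMondino17-Inv}, I would overcome this by working with a countable dense class of test marginals (e.g.\ normalized characteristic functions of rational subintervals with rational weights), producing a single $\mathfrak q$-negligible set outside of which the curvewise bound holds for \emph{all} members of the class, and then upgrading to arbitrary marginals via lower semicontinuity of the generalized distortion coefficients $\sigma_{k_\gamma}^{(t)}$ established after \eqref{eq:sigma monotone def}, together with stability of the 1D curvewise bound under weak convergence of densities. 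Compared to the constant-curvature setting, no substantial additional difficulty arises: the variable bound $k_\gamma$ enters only through $\tau_{k_\gamma^{\mp},N}$, which localizes naturally to each ray as $k\circ g(q,\cdot)$.
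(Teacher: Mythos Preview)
Your proposal is essentially correct and follows the same Cavalletti--Mondino scheme that the paper adapts: build global marginals by integrating ray-wise data, apply the curvewise ${\sf CD}(k,N)$ inequality of Proposition \ref{prop:cd_curvewise}, disintegrate back along $\mathfrak Q$, and handle the exceptional-set issue by reducing to a countable family together with lower semicontinuity of the distortion coefficients.

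The one place where the paper's execution differs from your outline is the final step. You propose to verify the 1D curvewise inequality \eqref{eq:curvewise_characterization_CD_variable} for a countable dense class of marginals and then upgrade to all marginals ``via stability of the 1D curvewise bound under weak convergence of densities''. This stability is delicate, because the curvewise bound is a \emph{pointwise} inequality for the interpolant density $\rho_t$, and pointwise density information is not preserved under weak convergence. The paper sidesteps this entirely: it uses only uniform marginals on subintervals $[A_i,A_i+L_i]$ (the same intervals for every ray), obtains an inequality of the form \eqref{eq:estim 1} depending continuously on the free parameters $A_0,A_1,L_0,L_1$, and then makes a specific choice \eqref{eq:choice_L0_L1} of $L_0,L_1$ (depending on $q$) that collapses the inequality to the ${\sf CD}$ \emph{density} condition of Definition \ref{def:1D CD}. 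The conclusion then follows from Lemma \ref{lem:cd density implica cd space}. This parametric-family trick is exactly what \cite[Thm.\ 4.2]{CavallettiMondino17-Inv} does in the constant-curvature case, so your citation is apt; just be aware that the ``countable class $+$ weak stability'' description you give is a paraphrase that hides the actual mechanism.
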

\begin{proof}
    We subdivide the proof into different steps.
    We recall by \cref{prop:mq are AC} that $\mm_q \ll g(q,\cdot)_\sharp\Leb^1$ for $\mathfrak{q}$-a.e.\ $q$, so that $h_q$ is a well-defined density on  ${\rm Dom}(g(q,\cdot))$.
    
    \noindent\textsc{Step} 1. Let $N>1$ and by arguing as in the first part of the proof of \cite[Thm.\ 4.2]{CavallettiMondino17-Inv}, it is not restrictive to prove the statement for $\frq$-a.e.\ $q \in Q'$, where $Q' \subset Q$ is such that $\{ g(q,0)\colon q \in Q'\}\subset \{u =0\}$. Moreover, without loss of generality we can assume that there are $a_0,a_1\in\R$ so that $a_0<0<a_1$ and 
    \[
    (a_0,a_1)\subset {\rm Dom}(g(q,\cdot)),\qquad \forall\,q \in Q'.
    \]
    Consider any $a_0 < A_0<A_1<a_1$ and $L_0,L_1>0$ so that $A_0+L_0<A_1$ and $A_1+L_1<a_1$.
    Set
    \[
    A_s\coloneqq (1-s)A_0+sA_1,\qquad L_s\coloneqq (1-s)L_0 + sL_1,
    \]
    and observe that
    \[
    s\mapsto \mu_s \coloneqq \int_{Q'}\frac{1}{L_s}\HH^1\mres{\{ g(q,t)\colon t \in [A_s,A_s+L_s]\}}\,\d\frq,
    \]
    is the unique $W_2$-geodesics between its endpoints and, since $\mm_q \ll g(q,\cdot)_\sharp\Leb^1$ for $\frq$-a.e.\ $q \in Q$, it also satisfies $\mu_s\ll \mm$. Denoting by $\rho_s$ its Radon-Nikodym derivative, it holds
    \begin{equation}
        \rho_s(g(q,t)) = \frac{1}{L_s}h_q(t)^{-1},\qquad \forall\,t \in [A_s,A_s+L_s].
    \label{eq:rhos along ray}
    \end{equation}

    \noindent\textsc{Step} 2. Fix $A_0,A_1,L_0,L_1$ as in Step 1. Since, $(\X,\sfd,\mm)$ is an essentially non-branching ${\sf CD}(k,N)$ space, Proposition \ref{prop:existence_uniqueness} implies that $[0,1]\ni s\mapsto \mu_s$ is the unique $W_2$-geodesic between its endpoint, and we can apply \cref{prop:cd_curvewise} to $s\mapsto \mu_s$. Denoting by $\gamma_\cdot^{t_0} \coloneqq g(q,(1-\cdot)t_0+ \cdot \, t_1)$ (omitting the dependence $q$ for ease of notation), together with \eqref{eq:rhos along ray}, we obtain, for $\frq$-a.e.\ $q$,
    \begin{equation}
    \label{eq:loc_aux}
    L_s^{\frac 1N} h_q((1-s)t_0 + st_1)^{\frac 1N} \ge \tau^{(1-s)}_{{k^{-}_{\gamma^{t_0}}},N}(t_1-t_0)L_0^{\frac 1N}h_q(t_0)^\frac{1}{N} + \tau^{(s)}_{{k^{+}_{\gamma^{t_0}}},N}(t_1-t_0)L_1^{\frac 1N}h_q(t_1)^\frac{1}{N},   
    \end{equation}
    for all $s\in[0,1]$ and for $\Leb^1$-a.e.\ $t_0 \in [A_0,A_0+L_0]$, where $t_1$ is defined as the image of $t_0$ via monotone rearrangement map from $[A_0,A_0+L_0]$ to $[A_1,A_1+L_1]$. In particular, for any $\tau \in [0,1]$, if $t_0=A_0 + \tau L_0$, then $t_1 = A_1 + \tau L_1$. Thus, we can substitute in \eqref{eq:loc_aux}, obtaining, for $\frq$-a.e.\ $q$,
    \begin{align*}
       L_s^{\frac 1N} h_q(A_s + \tau L_s)^{\frac 1N} &\ge (1-s)^{\frac 1N} \sigma^{(1-s)}_{{k^{-}_{\gamma^{t_0}}},N}(A_1-A_0 + \tau(L_1-L_0))^{\frac{N-1}{N}}L_0^{\frac 1N}h_q(A_0 + \tau L_0)^\frac{1}{N} \\
       & \qquad + s^{\frac 1N}\sigma^{(s)}_{{k^{+}_{\gamma^{t_0}}},N}(A_1-A_0 + \tau(L_1-L_0))^{\frac{N-1}{N}}L_1^{\frac 1N}h_q(A_1 + \tau L_1)^\frac{1}{N} , 
    \end{align*}
    holds for $\Leb^1$-a.e.\ $\tau \in [0,1]$ and for all $s \in (0,1)$. We now choose, without relabeling it, a continuous representative of $s\mapsto h_q(s)$ whose existence is guaranteed by \eqref{eq:tmidpoints_on_bounded_sets} and the same argument of \cite[Prop.\ 7.5]{Cav14}. Then, also using the lower semi-continuity of $\sigma^{(s)}_{{k^{\pm}_{\gamma^{t_0}}},N}(\cdot)$ (recall that $\gamma^{t_0} _\cdot =g(q, (1-\cdot) (A_0+\tau L_0) + \cdot \, (A_1+\tau L_1))$ and $g(q,\cdot)$ is an isometry), we can send $\tau$ to zero along a suitable sequence (depending on $q$), to reach that, for $\frq$-a.e.\ $q$,
    \begin{equation}
       (L_s)^{\frac 1N} h_q(A_s)^{\frac 1N}\ge (1-s)^{\frac 1N}\sigma^{(1-s)}_{{k^{-}_{\gamma^{t_0}}},N}(A_1-A_0)^{\frac{N-1}{N}}L_0^{\frac 1N}h_q(A_0)^\frac{1}{N}+s^{\frac 1N}\sigma^{(s)}_{{k^{+}_{\gamma^{t_0}}},N}(A_1-A_0)^{\frac{N-1}{N}}L_1^{\frac 1N}h_q(A_1)^\frac{1}{N}. \label{eq:estim 1}
     \end{equation}
     \noindent\textsc{Step} 3. In the inequality \eqref{eq:estim 1}, the left-hand side depends continuously on $A_0,A_1,L_0,L_1$, while the right-hand side is lower semicontinuous as a function of $A_0,A_1,L_0,L_1$. Thus, there is a common exceptional set $N \subset Q'$ with $\frq(N)=0$, such that \eqref{eq:estim 1} holds true for every $q \in Q'\setminus N$, for every $s\in [0,1]$ and for all choices of $A_0,A_1,L_0,L_1$ as in Step 1. Therefore, for fixed $q\in Q'\setminus N$, we choose
    \begin{equation}
    \begin{aligned}
    \label{eq:choice_L0_L1}
    & L_0\coloneqq \frac{L}{1-s}\frac{ \sigma^{(1-s)}_{{k^{-}_{\gamma^{t_0}}},N}(A_1-A_0)\,h_q(A_0)^{\frac{1}{N-1}}}{\sigma^{(1-s)}_{{k^{-}_{\gamma^{t_0}}},N}(A_1-A_0)\,h_q(A_0)^{\frac{1}{N-1}} + \sigma^{(s)}_{{k^{+}_{\gamma^{t_0}}},N}(A_1-A_0)\,h_q(A_1)^{\frac{1}{N-1}}},\\
    &L_1\coloneqq \frac{L}{s}\frac{ \sigma^{(s)}_{{k^{+}_{\gamma^{t_0}}},N}(A_1-A_0) \,h_q(A_1)^{\frac{1}{N-1}}}{\sigma^{(1-s)}_{{k^{-}_{\gamma^{t_0}}},N}(A_1-A_0)\,h_q(A_0)^{\frac{1}{N-1}} + \sigma^{(s)}_{{k^{+}_{\gamma^{t_0}}},N}(A_1-A_0)\,h_q(A_1)^{\frac{1}{N-1}}},
    \end{aligned}
    \end{equation}
    for $L>0$ small enough. Notice that $L_s=(1-s)L_0 + s L_1 = L$ and, inserting \eqref{eq:choice_L0_L1} in \eqref{eq:estim 1}, we get
    \[
      h_q(A_s)^{\frac{1}{N-1}} \ge \sigma^{(1-s)}_{{k^{-}_{\gamma^{t_0}}},N}(A_1-A_0)h_q(A_0)^\frac{1}{N-1}+\sigma^{(s)}_{{k^{+}_{\gamma^{t_0}}},N}(A_1-A_0)h_q(A_0)^\frac{1}{N-1}
    \]
    for all $a_0 < A_0<A_1< a_1, s\in [0,1]$. By arbitrariness of $A_0,A_1$, this shows that $h_q$ is a ${\sf CD}(k\circ g(q,\cdot),N)$ density in the sense of Definition \ref{def:1D CD}, and proves the statement in light of Lemma \ref{lem:cd density implica cd space}.
\end{proof}
\begin{remark}
\label{rmk:densities}\rm
    From Lemma \ref{lem:cd density implica cd space}, the functions $(h_q)_{q\in Q}$ of the above theorem are ${\sf CD}(k\circ g(q,\cdot),N)$ densities, cf.\ Definition \ref{def:1D CD}. As such they admit locally Lipschitz representatives, which we will always consider without further notice.\fr
\end{remark}
\section{One-dimensional comparison estimates} \label{sec:mean curvature deficit 1d}
For $N \in (1,\infty)$ and $K \in \R$, we define the one-dimensional $(K,N)$-model space as
\[
\left(\left[0,\pi \sqrt{\frac{N-1}{K^+}}\, \right],|\cdot|,h_{K,N}\Leb^1\right), \qquad\text{where } h_{K,N} \coloneqq \sin_{K/(N-1)}^{N-1}.
\]
We denote the volume of the ball $B_r(0)$ in such a space by $v_{K,N}(r)$, namely
\begin{equation}
v_{K,N}(r) = \int_0^r h_{K,N}(t)\,\d t \qquad \text{for }r \in \left[0, \pi \sqrt{\frac{N-1}{K^+}} \, \right].
\label{eq:vKN}
\end{equation}
We define the mean curvature in the $(K,N)$-model space as 
\[
         H_{K,N} (r) \coloneqq \log(h_{K,N}(r))'=(N-1)\frac{\sin_{K/(N-1)}'(r)}{\sin_{K/(N-1)}(r)},\qquad \text{for }r \in \left(0, \pi \sqrt{\frac{N-1}{K^+}} \right),
\]
noting that this is well-defined since the denominator is strictly positive. For the next definition, we refer to Appendix \ref{apdx:1D density} for the notion of ${\sf CD}$ density. 

\begin{definition}[Mean curvature deficit]\label{def:deficit}
     Fix $K \in \R,N \in (1,\infty)$ and $D\in(0,\infty]$. Let $\kappa \colon [0,D] \to \mathbb{R}$ be admissible. Suppose that $h \colon [0,D]\to [0,\infty)$ is a ${\sf CD}(\kappa,N)$ density on $[0,D]$. We define the \emph{mean curvature deficit} of $h$ by
     \[
        \psi(r) \coloneqq  \left((\log h)'(r)-H_{K,N} (r)\right)\vee 0,\qquad\text{a.e.\ }r\in (0,D).
     \]
\end{definition}
\begin{remark}
\label{rmk:boundedness} \rm
    Note that $\psi(t)$ is a.e.\ well-defined since $h$ is locally Lipschitz (cf.\ Lemma \ref{lem:cd density implica cd space}). From this, we have $\psi \in L^p_{loc}([0,D))$, for every $p\geq 1$. Indeed, letting $I\subset [0,D)$ be a compact interval, there is $K_0\in \R$ such that $\kappa \ge K_0$ on $I$. Then, from \cite[Lemma A.9]{CavallettiMilman21}, we have 
    \[
        (\log h)'(r)\leq H_{K_0,N}(r),\qquad\text{a.e.\ }r\in I,
    \]
    so that $0\leq\psi\leq (H_{K_0,N}-H_{K,N}) \vee 0$ a.e.\ on $I$. On the other hand, the behavior of $H_{K,N}(r)$ as $r\downarrow 0$ is independent of $K$, hence $H_{K_0,N}-H_{K,N}$ is uniformly bounded close enough to zero. \fr
\end{remark}
We next extend \cite[Lemma 3.1]{Aubry2007} to ${\sf CD}$ densities on intervals (see also \cite{PetersenWei97,PetersenSprouse98}).
\begin{proposition}\label{prop:mean vs deficit 1D}
     Fix $K\in \R,N \in (1,\infty)$, $p>N/2$ and $D\in(0,\infty]$. Let $\kappa \colon [0,D] \to \mathbb{R}$ be admissible.  Suppose that $h \colon [0,D]\to [0,\infty)$ is a ${\sf CD}(\kappa,N)$ density on $[0,D]$ satisfying 
     \begin{equation}
     \label{eq:average_zero}
       \limi_{r\downarrow 0}\dashint_0^r h\,\d t=0.    
     \end{equation}
     Then, for a.e.\ $r \in \Big(0,  \frac{\pi}{2}\sqrt{\frac{N-1}{K^+}}\wedge D\Big)$ it holds
    \[
    \psi(r)^{2p-1}h(r) \le \alpha_{N,p}\int_0^r  | (\kappa(t) -K)\wedge 0 |^p h(t)\, \d t,
    \]
    where $\alpha_{N,p} \coloneqq (2p-1)^p \left(\frac{N-1}{2p-N}\right)^{p-1}$ and $\psi$ is as in Definition \ref{def:deficit}. Moreover, if $K>0$ and $D> \frac{\pi}{2}\sqrt{\frac{N-1}{K}}$, then for a.e.\ $r \in \left( \frac{\pi}{2}\sqrt{\frac{N-1}{K}}, \pi\sqrt{\frac{N-1}{K}}\wedge D\right)$, it holds
    \[
        \sin\left(\sqrt {\frac{K}{N-1}} r\right)^{4p-N-1}\psi^{2p-1}(r)h(r) \le \alpha_{N,p} \int_0^r  |(\kappa(t) -K)\wedge 0 |^p h(t)\, \d t.
    \]
\end{proposition}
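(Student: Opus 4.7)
My plan is to reduce the problem to a Riccati-type comparison and integrate a well-chosen auxiliary quantity. Set $m(r)\coloneqq (\log h)'(r)$; the ${\sf CD}(\kappa,N)$ density condition on $h$ is equivalent to $(h^{1/(N-1)})''+\tfrac{\kappa}{N-1}h^{1/(N-1)}\le 0$ in the distributional sense, and, after the change of variables $y=h^{1/(N-1)}$, to the a.e.\ Riccati inequality
\[
m'(r)+\frac{m(r)^2}{N-1}\le -\kappa(r),\qquad r\in(0,D).
\]
The model mean curvature $H\coloneqq H_{K,N}$ satisfies the corresponding equality $H'+\tfrac{H^2}{N-1}=-K$. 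Subtracting (with the chain rule giving $\psi'=0$ a.e.\ on $\{\psi=0\}$) I obtain a.e.\ on $(0,D)$
\[
\psi'+\frac{m+H}{N-1}\,\psi\le (K-\kappa)_+.
\]

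The next step is to introduce the auxiliary function $\varphi(r)\coloneqq\psi(r)^{2p-1}h(r)$. Using $h'=mh$ and $m=\psi+H$ where $\psi>0$, direct differentiation combined with the previous inequality yields the master estimate
\[
\varphi'+\frac{2p-N}{N-1}\,\psi^{2p}h+\frac{4p-N-1}{N-1}\,H\,\psi^{2p-1}h\le (2p-1)(K-\kappa)_+\psi^{2p-2}h,\qquad\text{a.e.\ }r\in(0,D).
\]
The hypothesis $p>N/2$ makes both coefficients $\tfrac{2p-N}{N-1}$ and $\tfrac{4p-N-1}{N-1}$ strictly positive, so the second and third terms on the left play the role of "good" terms.

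For the first range $r\in\bigl(0,\tfrac{\pi}{2}\sqrt{(N-1)/K^+}\wedge D\bigr)$, one has $H\ge 0$, so the third term on the left is non-negative and I discard it. I then integrate from $0$ to $r$: by Remark \ref{rmk:boundedness}, $\psi$ is locally bounded near $0$, while the assumption \eqref{eq:average_zero} combined with the continuity of $h$ (Lemma \ref{lem:cd density implica cd space}) forces $h(0^+)=0$, hence $\varphi(0^+)=0$. Setting
\[
A(r)\coloneqq\int_0^r(K-\kappa)_+^p h\,\d t,\qquad B(r)\coloneqq\int_0^r\psi^{2p}h\,\d t,
\]
H\"older's inequality with exponents $p$ and $p/(p-1)$ on the right-hand side gives
\[
\varphi(r)+\frac{2p-N}{N-1}B(r)\le (2p-1)\,A(r)^{1/p}\,B(r)^{(p-1)/p},
\]
after which a weighted Young inequality absorbs $B(r)$ into the right-hand side to produce $\varphi(r)\le\alpha_{N,p}A(r)$ (the explicit constant in the statement, while non-sharp, is the clean bound output by the natural Young weighting). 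For the second range $K>0$ and $r\in\bigl(\tfrac{\pi}{2}\sqrt{(N-1)/K},\pi\sqrt{(N-1)/K}\wedge D\bigr)$, $H$ is negative and the third term in the master inequality must be retained. The key trick is to multiply by the weight $w(r)\coloneqq\sin^{4p-N-1}(\sqrt{K/(N-1)}\,r)$, tailored so that $w'/w=\tfrac{(4p-N-1)H}{N-1}$ exactly cancels the offending $H$-contribution when one expands $(w\varphi)'=w'\varphi+w\varphi'$. Since $w(0)=0$ and $\varphi$ is bounded near $0$ (by the first case applied up to the midpoint), $(w\varphi)(0^+)=0$; bounding $w\le 1$ in the H\"older step and repeating the Young-absorption argument yields $w(r)\varphi(r)\le\alpha_{N,p}A(r)$, the second claim.

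The main technical obstacles are twofold. First, the Riccati manipulation must be justified in the low regularity natural to ${\sf CD}$-densities ($h^{1/(N-1)}$ is only locally Lipschitz, so $m$ is of locally bounded variation and $\psi$ is an a.e.-defined non-negative function); I would handle this either by interpreting the Riccati inequality distributionally and testing against non-negative test functions that approximate $\psi^{2p-2}h$, or by approximating $\kappa$ through the continuous sequence $\kappa_n$ produced by \eqref{eq:k_n_continuous} and passing to the limit in the (then classical) smooth computation. Second, the passage from the merely qualitative hypothesis \eqref{eq:average_zero} to the pointwise vanishing $\varphi(0^+)=0$ is the decisive analytic input of the proof: this is exactly the step that rules out "collapsed" densities and explains why the stated integrability assumption cannot be removed.
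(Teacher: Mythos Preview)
Your strategy is exactly the paper's: derive the differential inequality for $\psi$, multiply by $(2p-1)\psi^{2p-2}h$ (equivalently, differentiate $\varphi=\psi^{2p-1}h$), integrate, and combine H\"older with a Young-type absorption; for the second range the weight $\sin^{4p-N-1}(\sqrt{K/(N-1)}\,r)$ is chosen precisely so that its logarithmic derivative cancels the $H$-term. The paper carries all this out with a general weight $\phi$ and specializes to $\phi\equiv 1$ and $\phi=\sin^{4p-N-1}$ at the end, but the content is identical.

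The substantive difference is how the low regularity and the boundary term at $0$ are handled. The paper mollifies: it replaces $h$ by $h_\eps=\exp((\log h)\ast\eta_\eps)$, works with the smooth density on $(\eps,D-\eps)$, and passes to the limit using Proposition~\ref{prop:convolution 1D density}. In particular, the vanishing of the boundary term comes from item iii) there, i.e.\ $\liminf_{\eps\downarrow 0}h_\eps(\eps)=0$, which is proved via Jensen's inequality directly from the assumption~\eqref{eq:average_zero}. Your route is to argue $h(0^+)=0$ pointwise. That conclusion is correct, but the justification you give (``continuity of $h$, Lemma~\ref{lem:cd density implica cd space}'') is not quite enough: the lemma only gives local Lipschitz continuity on the \emph{open} interval. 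What makes $h(0^+)$ exist is that $\log h$ is semi-concave near $0$ (since $\kappa\ge K_0$ there), hence has a one-sided limit in $[-\infty,\infty)$; then \eqref{eq:average_zero} forces this limit to be $-\infty$. Similarly, your master inequality involves differentiating $\psi$, and $m=(\log h)'$ is a~priori only BV; the semi-concavity ensures the singular part of $m'$ is non-positive, so jumps of $\psi$ (hence of $\varphi$) are only downward and the integrated inequality survives, but this has to be said. The mollification in the paper sidesteps both issues cleanly; your distributional alternative works too once these points are made explicit.
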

\begin{proof}
    Without loss of generality, we can suppose $D<\infty$. Indeed, if not, for every $n\in\N$ the restriction of $h$ to the interval $[0,n]$ is a ${\sf CD}(\kappa,N)$ density. Then, the claim would follow by monotone approximation, up to discarding countably many negligible sets of $\Big(0, \pi\sqrt{\tfrac{N-1}{K^+}}\Big)$.  We set for brevity $\rho(t) \coloneqq  |(\kappa(t) -K)\wedge 0 |$ for $t \in (0,D)$ and subdivide the proof into different steps.

    \noindent\textsc{Step 1: Regularization of the densities}. For $\eps>0$ sufficiently small, we consider the density
    \begin{equation*}
        h_\eps \coloneqq  \exp \left( (\log h) \ast \eta_\eps \right)  \qquad\text{on } (\eps,D-\eps),
    \end{equation*}
    for $\{\eta_\eps\}_{\eps \in (0,D/2)}$ smooth mollifiers as in Proposition \ref{prop:convolution 1D density}. Thus, $h_\eps$ satisfies
     \begin{equation*}
         (\log h_\eps)''(t) + \frac{[(\log h_\eps)'(t)]^2}{N-1} \le - \kappa \ast \eta_\eps(t),\qquad\text{for a.e.\ }t \in (\eps,D-\eps).
     \end{equation*}
     Define $\kappa_\eps\coloneqq  \kappa \ast \eta_\eps$ and $\rho_\eps(t) \coloneqq | (\kappa_\eps (t) -K)\wedge 0 |$. We set $H_{K,N}^\eps(t)\coloneqq H_{K,N}(t-\eps)$, for every $t\in (\eps,D -\eps)$, and we write $\psi_\eps(t) \coloneqq  ((\log h_\eps)'(t)-H^\eps_{K,N}(t))\vee 0 $ for every $t\in (\eps,D-\eps)$. Note the key fact that $H_{K,N}(\cdot)$ is non-negative in the interval $\left(0,  \frac{\pi}{2}\sqrt{\frac{N-1}{K^+}}\wedge D\right)$. Since $h_\eps$ is $C^2(\eps,D-\eps)$ and $H_{K,N}$ is $C^1(0,D)$, we have that $(\eps,D-\eps)\ni t\mapsto \psi_\eps(t)$ is locally Lipschitz. Indeed, $\Phi(t)\coloneqq  0\vee t$ is 1-Lipschitz and $\psi_\eps(t)=\Phi\big((\log h_\eps)'(t)-H^\eps_{K,N}(t)\big)$, hence the composition is locally Lipschitz. A chain rule argument, together with  the computation (assume, without loss of generality, that $\psi_\eps(t)\neq 0$)
    \begin{align*}
        \left((\log h_\eps)'-H^\eps_{K,N}\right)' &\le -\kappa_\eps - \frac{((\log h_\eps)')^2}{N-1} + \frac{(H^\eps_{K,N})^2}{N-1} + K  \\
        &\le  -\kappa_\eps + K -2\frac{((\log h_\eps)'-H^\eps_{K,N})H^\eps_{K,N}}{N-1} - \frac{ ( (\log h_\eps)'- H^\eps_{K,N})^2}{N-1}\\
         &\le \rho_\eps  -2\frac{\psi_\eps H^\eps_{K,N}}{N-1} - \frac{(  (\log h_\eps)'- H^\eps_{K,N})^2}{N-1}.
    \end{align*}
    gives that
    \begin{equation}
    \psi_\eps'(t) + \frac{\psi_\eps(t)^2}{N-1} + 2 \frac{\psi_\eps H^\eps_{K,N}(t)}{N-1} \le \rho_\eps (t)\qquad \text{for a.e. }t\in(\eps,D-\eps). 
    \label{eq:key ODI}
    \end{equation}

    \noindent\textsc{Step 2: Estimate for $\eps>0$.} Consider any $R \in  \left(0, \frac{\pi}{2}\sqrt{\frac{N-1}{K^+}}\wedge D\right)$ and observe that there exists $\eps_0\coloneqq \eps_0(R)>0$ small enough so that $R< T -\eps_0(R)\le T -\eps$ for all $\eps \in (0,\eps_0)$, where $T\coloneqq  \frac{\pi}{2}\sqrt{\frac{N-1}{K^+}}\wedge D$. We claim that for any $\eps<\eps_0$, $r \in (\eps,R)$ and and for any function $\phi \in C^1(0,D)$ strictly positive and bounded in a neighborhood of $0$, it holds
    \begin{multline*}
        (\phi\psi_\eps^{2p-1}h_\eps)'(r) \le (2p-1)\rho_\eps(r) \phi(r) \psi_\eps^{2p-2}(r)h_\eps(r) - \left(\frac{2p-N}{N-1} \right) \phi (r)\psi_\eps^{2p}(r)h_\eps(r)\\
        + \left( \frac{4p-N-1}{N-1}H^\eps_{K,N}(r) - \frac{\phi'(r)}{\phi(r)}\right)^-\phi(r)\psi_\eps^{2p-1}(r)h_\eps(r).
    \end{multline*}
    Indeed, we observe that, by \eqref{eq:key ODI},
    \begin{align*}
        (\phi\psi_\eps^{2p-1}h_\eps)' &=  \phi'\psi_\eps^{2p-1}h_\eps +(2p-1)\phi\psi_\eps^{2p-2}h_\eps\psi_\eps' + \phi\psi_\eps^{2p-1}\left((\log h_\eps)' \pm H^\eps_{K,N}\right) h_\eps \\
        &\le\phi'\psi_\eps^{2p-1}h_\eps+(2p-1)\phi\psi_\eps^{2p-2}h_\eps\bigg(\rho_\eps-\frac{\psi_\eps^{2}}{N-1} -\frac{2\psi_\eps H^\eps_{K,N}}{N-1} \bigg)+\phi\psi_\eps^{2p}h_\eps + \phi\psi_\eps^{2p-1}H_{K,N}^\eps h_\eps \\
        & = (2p-1)\phi\rho_\eps\psi_\eps^{2p-2}h_\eps + \left(\frac{N-2p}{N-1}\right)\phi\psi_\eps^{2p}h_\eps  +\left( \frac{\phi'}{\phi}+\left(1-2\frac{2p-1}{N-1}\right)H_{K,N}^\eps \right)\phi\psi_\eps^{2p-1}h_\eps
    \end{align*}
    from which the claimed inequality holds. Integrating from $\eps$ to $r$, and by H\"older inequality, we get
    \begin{multline}
        \phi(r)\psi_\eps^{2p-1}(r)h_\eps(r) - \phi(\eps)\psi_\eps^{2p-1}(\eps)h(\eps) \le (2p-1)\left(\int_\eps^r \phi \rho_\eps^p h_\eps\,\d t\right)^{\frac 1p}\mathcal{I}_\eps^{1-\frac 1p} -\left(\frac{2p-N}{N-1}\right)\mathcal{I}_\eps \\
         + \left(\int_\eps^r \left(\left(\frac{4p-N-1}{N-1}\right) H^\eps_{K,N} - \frac{\phi'}{\phi}\right)^-\phi h_\eps\,\d t \right)^{\frac 1{2p}}\mathcal{I}_\eps^{1-\frac{1}{2p}}
    \label{eq:stima eps}
    \end{multline}
    where we denoted $\mathcal{I}_\eps \coloneqq \int_\eps^r\phi \psi_\eps^{2p}h_\eps\,\d t$ .

    \noindent{\sc Step 3: Limit as $\eps\downarrow 0$}. We pass to the limit in \eqref{eq:stima eps} as $\eps\downarrow 0$, treating each term separately. Recall that we fixed $R<T-\eps_0(R)$ while $\eps<\eps_0(R)$ as well as $r \in (\eps,R)$ are arbitrary. By construction and using item i) of \cref{prop:convolution 1D density}, as $\eps\downarrow 0$, it holds
    \[
        \phi(t)\psi_\eps^{2p-1}(t)h_\eps(t) \to\phi(t)\psi^{2p-1}(t)h(t),\qquad \text{for a.e.\ }t\in(\eps_0(R),R). 
    \] 
    We first claim that
    \[
        \phi(\eps)\psi_\eps^{2p-1}(\eps)h_\eps(\eps) \to 0, \qquad \text{as }\eps\downarrow 0.
    \]
    This follows directly by item iii) of Proposition \ref{prop:convolution 1D density} (which holds under the assumption \eqref{eq:average_zero}), provided we can show that $\phi(\eps)\psi_\eps(\eps)$ is uniformly bounded as $\eps\downarrow 0$. Let $K_0\in\R$ be such that $\kappa,\kappa_\eps\geq K_0$ on $[0,D]$ for every $\eps\in (0,\eps_0)$. Then, from \cref{rmk:boundedness}, after taking convolutions, we get
    \begin{equation*}
        (\log h_\eps)'(t)\leq H_{K_0,N}\ast\eta_\eps(t),\qquad\forall\,t\in [\eps,D-\eps]. 
    \end{equation*}
    Now, exploiting the fact that $H_{K_0,N}$ is a non-decreasing function in $[\eps,D-\eps]$, we deduce that
    \begin{equation*}
        (\log h_\eps)'(t)\leq H_{K_0,N}\ast\eta_\eps(t)= \int_{t-\eps}^{t+\eps}H_{K_0,N}(s)\eta_\eps(t-s)\d s\leq H_{K_0,N}(t-\eps)=H^\eps_{K_0,N}(t), 
    \end{equation*}
    for $t\in [\eps,D-\eps]$. Finally, $\psi_\eps(t)\leq (H^\eps_{K_0,N}(t)-H^\eps_{K,N}(t))\vee 0$, and, it holds that 
    \begin{equation}
    \label{eq:taylor_exp}
        H^\eps_{K,N}(t)=\frac{N-1}{t-\eps}(1+o(t-\eps)),\qquad\text{as }t\to\eps, \text{ independently of $K$.}
    \end{equation} 
    Thus $\psi_\eps(\eps)$ is uniformly bounded. Since $\phi$ is bounded close to zero, the claim follows. We show that
    \begin{equation}
       \mathcal{I}_\eps =  \int_\eps^r \phi \psi_\eps^{2p}h_\eps\,\d t  \to \int_0^r \phi  \psi^{2p}h\,\d t \eqqcolon \mathcal{I}, \qquad\text{as }\eps\downarrow 0. \label{eq:dominated claim}
    \end{equation} 
    Firstly, by item ii) of Proposition \ref{prop:convolution 1D density}, we note that, for a.e.\ $t \in (0,r)$, it holds
    \begin{equation*}
        \nchi_{(\eps,r)}\Phi\big((\log h_\eps)'(t) -H^\eps_{K,N}(t)\big)^{2p} h_\eps(t) \to \nchi_{(0,r)} \Phi((\log h)'(t) -H_{K,N}(t))^{2p}  h(t).
    \end{equation*}
    Secondly, $\psi_\eps(t)$ is uniformly bounded, due to the estimate $\psi_\eps(t)\leq (H^\eps_{K_0,N}(t)-H^\eps_{K,N}(t))\vee 0 $ deduced above and recalling the uniform bound in \cref{rmk:boundedness}. Moreover, by the definition of $h_\eps$ and by applying Jensen's inequality we have that $ \sup_{t \in [\eps,D-\eps]} h_\eps(t) \le  \sup_{t \in (0,D)} h(t) < \infty$, where the last inequality follows by \cite[Lemma A.8]{CavallettiMilman21}, recalling that $\kappa\ge K_0$ for some $K_0 \in \R$. Therefore, since $\phi$ is uniformly bounded on $(0,r)$, we apply the dominated convergence theorem to deduce \eqref{eq:dominated claim}.

    Thirdly, we claim that 
    \[
         \int_\eps^r \phi  \rho_\eps
        ^ph_\eps\,\d t \to \int_0^r\phi \rho
        ^ph\,\d t , \qquad\text{as }\eps\downarrow 0.
    \]
    Indeed, up to extracting a (not relabeled) subsequence, $\rho_\eps\to \rho$ a.e.\ on $(0,D)$. Then, since $h_\eps \to h$ locally uniformly on $(0,D)$, we deduce that $\nchi_{(\eps,r)}\rho_\eps^{p} h_\eps \to \nchi_{(0,r)}\rho^{p} h$ a.e.\ in $(0,r)$. Reasoning as before, we apply the dominated convergence theorem here as well to get the claim. 
    
    Finally, sending $\eps\downarrow 0$ in \eqref{eq:stima eps}, we deduce, for a.e.\ $r \in (\eps_0(R),R)$, 
    \begin{multline}
        \phi(r)\psi^{2p-1}(r)h(r) \le (2p-1)\left(\int_0^r \phi \rho^p h\,\d t\right)^{\frac 1p}\mathcal{I}^{1-\frac 1p} -\left(\frac{2p-N}{N-1}\right)\mathcal{I} \\
        + \left(\int_0^r \left(\frac{4p-N-1}{N-1} H_{K,N} - \frac{\phi'}{\phi}\right)^{-}-\phi h\,\d t \right)^{\frac 1{2p}}\mathcal{I}^{1-\frac{1}{2p}}.
    \label{eq:stima zero}
    \end{multline}
    Since $R$ is arbitrary and $\eps_0(R) \to 0$ as $R\to T$, the above holds for a.e.\ $r \in (0,T)$ as well. At last, since $ \phi(r)\psi^{2p-1}(r)h(r) \ge 0$, rearranging terms and dividing by  $\mathcal{I}^{1-\frac 1p}$ we get
    \begin{multline}
        \mathcal{I}^{\frac 1{2p}} \le \sqrt{\frac{(N-1)(2p-1)}{2p-N}}\left(\int_0^r\phi\psi^{2p}h\,\d t \right)^{\frac 1{2p}} \\
         + \left(\frac{N-1}{2p-N}\right) \left(\int_0^r \left(\left(\frac{4p-N-1}{N-1}\right) H_{K,N} - \frac{\phi'}{\phi}\right)^-\phi h\,\d t \right)^{\frac 1{2p}}
    \label{eq:stima X}
    \end{multline}

    \noindent\textsc{Step 4: Conclusion}. Taking $\phi \equiv 1$, we combine the estimate \eqref{eq:stima zero} with \eqref{eq:stima X} to deduce the first conclusion setting $\alpha_{N,p} \coloneqq  (2p-1)^p\left(\frac{N-1}{2p-N}\right)^{p-1}$. Instead, for the last conclusion, if $K>0$ we choose $\phi(r) = \sin\left(\sqrt{\frac{K}{N-1}} r\right)^{4p-N-1}$. In this case, for  a.e.\ $r \in \left( \frac{\pi}{2}\sqrt{\frac{N-1}{K}},\pi\sqrt{\frac{N-1}{K^+}}\right)$ we observe that the last integral in  \eqref{eq:stima X} vanish. As before, the conclusion follows by combining \eqref{eq:stima X} with \eqref{eq:stima zero}. 
\end{proof}
\section{Bishop-Gromov comparison}
\subsection{Technical lemmas}
In this section we shall prove our main result Theorem \ref{thm:BG} relying on Theorem \ref{thm:localization}. We start with some technical results around the disintegration relative to $\sfd_x$.  
\begin{definition}[Star-shaped set]\label{def:starshaped}
    Let $(\X,\sfd)$ be a metric space. We say that a Borel set $T\subset \X$ is star-shaped at $x\in T$, provided that, for every $y \in T$, there exists a geodesic $[0,1]\ni t \mapsto \gamma_t \in \X$ so that $\gamma_0=x,\gamma_1=y$ and $\gamma_t \in T$ for every $t \in [0,1]$. 
\end{definition}
Our first result is a polar coordinate formula via the localization associated to $u\sfd_x$. In this case, $T_u=\X$ and $\mm(\X\setminus T_u^{nb})=0$. In addition, for every $q\in Q$, the ray map $t\mapsto g(q,t)$ is an isometry between its domain (which always contain $0$) and $\X_q$. We refer to Section \ref{sec:localization} for details.   
\begin{lemma}\label{lem:polar_T}
Let $(\X,\sfd,\mm)$ be an essentially non-branching ${\sf CD}(k,N)$ space for some $N>1$ and $k\colon \X\to\R$ admissible. Let $T\subset\X$ be an open star-shaped set at some $x \in \X$, and consider the disintegration relative to $\sfd_x$. Then, for all $s < r$ and $\varphi \colon \X \to \R$ Borel and integrable, it holds
\begin{equation}
   \int_{(B_r(x)\setminus B_s(x))\cap T} \varphi\,\d \mm  =\int_s^r \int_{Q_T(t)} \varphi(g(q,t))h_q(t)\,\d {\mathfrak q}\,\d t,\label{eq:polar disintegration_T}
\end{equation}
where $h_q$, $r_q$ are defined in Remark \ref{rmk:ray_charles} and $Q_T(r)\coloneqq \{ q \in Q: r \in [0, r_q\wedge E_T^q )\}$. Here, $E_T^q$ is the first exit time of $t\mapsto g(q,t)$ from $T$, namely
\begin{equation*}
    E_T^q\coloneqq \sup\{t>0:g(q,t)\in T\}.
\end{equation*}
Moreover, $E_T^q>0$, whenever $r_q>0$.
\end{lemma}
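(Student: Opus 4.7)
The plan is to proceed via the disintegration of $\mm$ relative to the $1$-Lipschitz function $u\coloneqq\sfd_x$ and then to identify the indicator of $T$ along each ray issuing from $x$. First I would observe that $\{u=0\}=\{x\}$, so after the standard reduction used in the proof of \cref{thm:localization} one may take $g(q,0)=x$ for every $q\in Q$; each ray $t\mapsto g(q,t)$ is then a unit-speed geodesic starting at $x$, and $\sfd(x,g(q,t))=t$. Combining \cref{prop:disintegration} with \cref{coro:existence_optimal_maps} (so that $\mm(\X\setminus T_u^{nb})=0$) and with the representation $\mm_q=g(q,\cdot)_\sharp (h_q\Leb^1)$ from \cref{rmk:ray_charles}, the preliminary polar formula
\begin{equation*}
\int_{B_r(x)\setminus B_s(x)}\varphi\,\d\mm=\int_Q\int_s^{r\wedge r_q}\varphi(g(q,t))\,h_q(t)\,\d t\,\d\mathfrak{q}
\end{equation*}
follows for any Borel integrable $\varphi$, since $g(q,t)\in B_r(x)\setminus B_s(x)$ if and only if $t\in[s,r)$.

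The core step is then to prove that for $\mathfrak{q}$-a.e.\ $q\in Q$ with $r_q>0$ and for $\Leb^1$-a.e.\ $t\in[0,r_q)$ one has $g(q,t)\in T$ if and only if $t<E_T^q$. The implication $(\Rightarrow)$ is immediate: openness of $T$ and continuity of $g(q,\cdot)$ give a neighborhood of $t$ contained in $\{s:g(q,s)\in T\}$, which by the supremum definition forces $t<E_T^q$. For $(\Leftarrow)$, given $t<E_T^q$, the set $\{s\in(t,E_T^q):g(q,s)\in T\}$ is non-empty and open in $\R$, so it meets the $\mathfrak{q}$-full measure subset of values $t_1$ for which $g(q,t_1)\in T_u^{nb}$ and for which the restricted ray $s\mapsto g(q,s)$ on $[0,t_1]$ is the unique geodesic from $x$ to $g(q,t_1)$ (this uniqueness on a full measure set is a consequence of essential non-branching together with the fact that any unit-speed geodesic from $x$ ending at a non-branched transport point must lie in the corresponding ray, so that it is determined by the isometric identification $g(q,\cdot)$ with an interval). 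Pick such a $t_1$: star-shapedness of $T$ at $x$ produces a geodesic from $x$ to $g(q,t_1)$ lying entirely in $T$, which by uniqueness must coincide with the reparametrization of $s\mapsto g(q,s)$ on $[0,t_1]$; whence $g(q,s)\in T$ for every $s\in[0,t_1]$, in particular $s=t$.

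Combining the two directions, multiplying the integrand of the preliminary polar formula by $\chi_T$ and applying Fubini yields the claimed identity in terms of $Q_T(t)$. For the last assertion, $E_T^q>0$ whenever $r_q>0$, it suffices to note that since $T$ is open and $x\in T$, there is $\delta>0$ with $B_\delta(x)\subset T$; then $g(q,s)\in B_\delta(x)\subset T$ for all $s\in(0,\delta\wedge r_q)$, giving $E_T^q\geq \delta\wedge r_q>0$. The main obstacle I anticipate is the geodesic-uniqueness step in the $(\Leftarrow)$ direction: guaranteeing that for $\mathfrak{q}$-a.e.\ $q$ and $\Leb^1$-a.e.\ $t_1\in(0,r_q)$ the ray is really the \emph{only} geodesic from $x$ to $g(q,t_1)$. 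Once this consequence of essential non-branching and the structure of the disintegration is in hand, the remaining points (Borel measurability of $q\mapsto E_T^q$, justification of the Fubini interchange) are routine bookkeeping.
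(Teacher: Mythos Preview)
Your proposal is correct and follows essentially the same route as the paper: disintegrate with respect to $\sfd_x$, identify $\chi_{(B_r\setminus B_s)\cap T}\circ g(q,\cdot)$ with $\chi_{[s,r)}\chi_{Q_T(\cdot)}$, and apply Fubini. Your ``main obstacle'' is real but resolvable exactly as you sketch: if $g(q,t_1)\in T_u^{nb}$ (which holds for $\Leb^1$-a.e.\ $t_1\in(0,r_q)$, since $\mm(\X\setminus T_u^{nb})=0$ and $h_q>0$ a.e.), then any geodesic from $x$ to $g(q,t_1)$ has all its intermediate points in $\Gamma_u(g(q,t_1))$, and the non-branching condition $g(q,t_1)\notin A^+$ forces any two such intermediate points at the same $u$-level to coincide---so the geodesic is unique and equals $g(q,\cdot)|_{[0,t_1]}$; the paper simply asserts the resulting equivalence without spelling this out.
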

\begin{proof}
    Firstly, we claim that, for $\mathcal{L}^1$-a.e.\ $r$, $Q_T(r)$ is $\mathfrak{q}$-measurable. Indeed, since $T\subset\X$ is a Borel set and the map $q\mapsto r_q$ is $\mathfrak{q}$-measurable, the map $q \mapsto r_q\wedge E_T^q $ is $\mathfrak{q}$-measurable as well. Thus
\begin{equation*}
    S\coloneqq \{ (q,s) \in Q \times \R:\, 0 < s <  r_q\wedge E_T^q \}
\end{equation*}
is $\mathfrak{q}\times \mathcal{L}^1$-measurable and, by Fubini's theorem, for $\mathcal{L}^1$-a.e.\ $r$, the set $\{ 
q \in Q: (q,r) \in S \}=Q_T(r)$ is $\mathfrak{q}$-measurable, proving the claim. In addition, this shows that the right-hand side of \eqref{eq:polar disintegration_T} is well-defined. Secondly, we show that $E_T^q>0$ whenever $r_q>0$. Indeed, let $q\in Q$ such that $r_q>0$. Then, since $T$ is open and $g(q,0)=x$, for $t\in (0,r_q)$ sufficiently small, $g(q,t)\in T$ and $g(q,\cdot)\restr{[0,t]}\subset T$, being $T$ star-shaped. Therefore, $E_T^q\geq t>0$.
The proof of the identity \eqref{eq:polar disintegration_T} now follows by adapting \cite[Prop. 5.3]{CaputoRossi2024}. We report it here for completeness. Fix $s<r$ and observe that it holds 
\[
    t<r_q,\quad g(q,t)\in (B_r(x)\setminus B_s(x))\cap T \qquad\Longleftrightarrow\qquad t\in[s,r), \quad q\in Q_T(t).
\]
Consequently, we have the following identity of characteristic functions: 
\[
    \nchi_{(B_r(x)\setminus B_s(x))\cap T}(g(q,t))=\nchi_{[s,r)}(t)\nchi_{Q_T(t)}(q),\qquad \mm_q\times \mathfrak q\text{-a.e..} 
\]
Therefore, we can write
\begin{align*}
    \int_{(B_r(x)\setminus B_s(x))\cap T}\varphi\,\d \mm &= \int_Q \int_\X  \nchi_{(B_r(x)\setminus B_s(x))\cap T}\cdot \varphi\,\d \mm_q\d \mathfrak q \\
    &= \int_Q \int_0^\infty  \nchi_{(B_r(x)\setminus B_s(x))\cap T}(g(q,t)) \varphi(g(q,t))h_q(t)\,\d t\d \mathfrak q \\
    &= \int_Q\int_0^\infty \nchi_{[s,r)}(t)\nchi_{Q_T(t)}(q) \varphi(g(q,t))h_q(t)\,\d t\d \mathfrak q \\
    &= \int_0^\infty \int_{Q} \nchi_{[s,r)}(t)\nchi_{Q_T(t)}(q)\varphi(g(q,t))h_q(t)\,\d {\mathfrak q}\,\d t \\
    &= \int_s^r \int_{Q_T(t)}\varphi(g(q,t))h_q(t)\,\d \mathfrak q \d t
\end{align*}
having used the disintegration theorem (cf.\ Proposition \ref{prop:disintegration}).
\end{proof}
\begin{lemma}\label{lem:average integral tend zero}
Let $(\X,\sfd,\mm)$ be an essentially non-branching ${\sf CD}(k,N)$ space for some $N >1$ and $k\colon \X\to\R$ admissible.  Suppose that $x \in \X$ is such that $\mm(B_\varrho(x))=o(\varrho)$. Consider the disintegration relative to the $1$-Lipschitz function $\sfd_x$. Then, we have 
\begin{equation}
\label{eq:average_integral}
  \limi_{r \to 0} \dashint_0^r h_q(t)\,\d t=0,\qquad\frq\text{-a.e.}.
\end{equation}
\end{lemma}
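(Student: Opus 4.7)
The strategy is to combine the polar coordinate formula of \cref{lem:polar_T} with Fatou's lemma. Since an essentially non-branching ${\sf CD}(k,N)$ space is geodesic, the whole space $\X$ is open and star-shaped at $x$ in the sense of \cref{def:starshaped}, so I can apply \cref{lem:polar_T} with $T=\X$, $\varphi\equiv 1$, and $s=0$. For $T=\X$ the exit times $E_T^q$ impose no restriction, so $Q_T(t)=\{q\in Q:t<r_q\}$. Extending each $h_q$ by zero outside $[0,r_q]$ and applying Tonelli, together with $\mm(\X\setminus T_u^{nb})=0$ from \cref{coro:existence_optimal_maps}, I obtain
\[
\mm(B_r(x))=\int_0^r\int_{Q_T(t)}h_q(t)\,\d\frq\,\d t=\int_Q\int_0^r h_q(t)\,\d t\,\d\frq,\qquad\forall\,r>0.
\]

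Dividing by $r$ and setting $f_q(r)\coloneqq \dashint_0^r h_q(t)\,\d t$, the assumption $\mm(B_\varrho(x))=o(\varrho)$ becomes
\[
\int_Q f_q(r)\,\d\frq=\frac{\mm(B_r(x))}{r}\xrightarrow[r\downarrow 0]{}0.
\]
Then I fix a countable sequence $r_n\downarrow 0$ and apply Fatou's lemma to the non-negative integrands $f_q(r_n)$:
\[
\int_Q \limi_{n\to\infty} f_q(r_n)\,\d\frq \le \limi_{n\to\infty}\int_Q f_q(r_n)\,\d\frq=0,
\]
which forces $\limi_{n\to\infty}f_q(r_n)=0$ for $\frq$-a.e.\ $q\in Q$.

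For each such $q$ there is a subsequence $r_{n_k}\downarrow 0$ (depending on $q$) with $f_q(r_{n_k})\to 0$. Since $f_q\ge 0$, this yields $\limi_{r\downarrow 0}f_q(r)=0$ for $\frq$-a.e.\ $q$, which is the desired conclusion \eqref{eq:average_integral}.

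The only mild subtleties are verifying that \cref{lem:polar_T} is applicable with $T=\X$ (open and star-shaped in a geodesic space), together with the measure-zero ignorance of $\X\setminus T_u^{nb}$ from \cref{coro:existence_optimal_maps}, and the standard upgrade from a liminf along a fixed countable sequence to the continuous liminf. No deep analytic obstacle arises once the polar decomposition is in hand.
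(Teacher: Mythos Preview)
Your argument is correct and follows essentially the same route as the paper: express $\mm(B_r(x))$ via the disintegration as $\int_Q\int_0^r h_q\,\d t\,\d\frq$, divide by $r$, and apply Fatou's lemma to the non-negative averages. The paper applies Fatou directly over the continuous parameter $r\downarrow 0$, whereas you more carefully pass through a fixed countable sequence $r_n\downarrow 0$ and then upgrade to the continuous $\liminf$; this extra care is harmless and arguably cleaner, but the underlying idea is identical.
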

\begin{proof}
For every $q \in Q$, consider $h_q \colon [0,r_q] \to [0,\infty)$ as defined in Remark \ref{rmk:ray_charles} and extend to a not relabeled function $h_q \colon [0,\infty) \to [0,\infty)$ that is $0$ for every $t > r_q$. Now, since $\mm(B_\varrho(x))=o(\varrho)$, for $r$ sufficiently small, we have
\begin{equation*}
    \int_Q\int_0^r h_q(t)\,\d t \,\d \mathfrak{q}=\mm(B_r(x)) =o(r).
\end{equation*}
To conclude, we apply Fatou's lemma to the previous computation and get
\begin{equation*}
    \int_Q \limi_{r \to 0} \dashint_0^r h_q(t)\,\d t \,\d \mathfrak{q} \le \limi_{r \to 0} \int_Q\dashint_0^r h_q\,\d t \,\d \mathfrak{q} \le 0.
\end{equation*}
\end{proof} 
\begin{remark}\label{rem:necessity theta 1+c}\rm
    The conclusion  \eqref{eq:average_integral} is not always verified. Indeed, for $N>1$, an example is given by the metric measure space $([0,\infty),|\cdot|,\mm)$, where $\mm\coloneqq (r+1)^{N-1}\mathcal{L}^1$, which is ${\sf CD}(0,N)$. In particular, it is not true that $\mm([0,r])=o(r)$. \fr
\end{remark}
\begin{definition}[Spherical and volume integrals]
\label{def:spherical_integral}
     Let $(\X,\sfd,\mm)$ be an essentially non-branching ${\sf CD}(k,N)$ space for some $N> 1$ and $k\colon \X\to\R$ admissible.  Let $T\subset\X$ be an open star-shaped set at some $x\in T$ and consider the disintegration relative to $\sfd_x$. For every $r>0$, we set
     \[
        S_T(r) \coloneqq \int_{Q_T(r)}h_q(r)\,\d \mathfrak q,
     \]
    for $h_q(r)$ and $Q_T(r)$ as in Lemma \ref{lem:polar_T}. For every $r>0$, we denote by
    \[
        V_T(r) \coloneqq \int_0^rS_T(t)\,\d t=\mm(B_r(x)\cap T).
    \]
\end{definition}
The quantity $S_T$ is related to other notions of surface area in metric measure spaces, such as perimeter and Minkowski content (see e.g.\ \cite{CaputoCavallucci2025}).
The next lemma shows a monotonicity property of a key quantity related to $S_T$. It can be interpreted as a nonsmooth analogue of \cite[Lemma 2.2]{Aubry2007}. 
\begin{lemma}
\label{lem:2.2fatto_meglio}
    Let $(\X,\sfd,\mm)$ be an essentially non-branching ${\sf CD}(k,N)$ space for some $N> 1$ and $k\colon \X\to\R$ admissible.  Let $T\subset\X$ be an open star-shaped set at some $x\in T$ and consider the disintegration relative to $\sfd_x$. Then, the following properties hold:
    \begin{itemize}
        \item[${\rm i)}$] $S_T$ is right continuous and left lower semicontinuous on $(0,\infty)$;
        \item[${\rm ii)}$] $V_T$ is continuous and right-differentiable with derivative $S_T$ on $(0,\infty)$;
        \item[${\rm iii)}$] for every $\alpha \in (0,1],K\in\R$, the function
        \[
            f_\alpha(r) \coloneqq \left(\frac{S_T(r)}{h_{K,N}(r)}\right)^\alpha - \alpha\int_0^r\int_{Q_T(s)}\left(\frac{S_T(s)}{h_{K,N}(s)}\right)^{\alpha-1}\psi_q(s)\frac{h_q(s)}{h_{K,N}(s)}\,\d\mathfrak q\d s
        \]
        is non-increasing on  $\Big(0,\pi\sqrt{\tfrac{N-1}{K^+}}\Big)$, where, for every $q\in Q$, $\psi_q$ is the mean curvature deficit of $h_q$, cf.\ Definition \ref{def:deficit}, i.e.
        \begin{equation}
            \label{eq:mean_curvature_def_q}
            \psi_q(t)\coloneqq \left((\log h_q)'(t) -H_{K,N}(t)\right)\vee 0.
        \end{equation}
    \end{itemize}
\end{lemma}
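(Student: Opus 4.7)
My plan is to verify (i)--(iii) in order, relying crucially on Lemma~\ref{lem:polar_T} and the key structural monotonicity that $Q_T(r)=\{q:r<r_q\wedge E_T^q\}$ is non-increasing in $r$. In particular, $\chi_{Q_T(r_n)}\uparrow\chi_{Q_T(r)}$ when $r_n\downarrow r$ while $\chi_{Q_T(r_n)}\geq\chi_{Q_T(r)}$ when $r_n\uparrow r$, and each $h_q$ is continuous on $[0,r_q]$ by Remark~\ref{rmk:densities}.

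For (i), left lower semicontinuity is a direct application of Fatou's lemma to $\chi_{Q_T(r_n)}(q)\,h_q(r_n)$: for $q\in Q_T(r)$, eventually $\chi_{Q_T(r_n)}(q)=1$ and $h_q(r_n)\to h_q(r)$ by continuity, while the remaining $q$'s contribute nonnegatively. For right continuity I would write
\[
S_T(r_n)=\int \chi_{Q_T(r_n)}(q)\,h_q(r)\,\d\mathfrak q+\int \chi_{Q_T(r_n)}(q)\,(h_q(r_n)-h_q(r))\,\d\mathfrak q,
\]
where the first term converges to $S_T(r)$ by monotone convergence. The main obstacle of the proof lies in showing that the second term vanishes in the limit: I would use the local boundedness of $k$ to fix $K_0\in\mathbb R$ and $\delta>0$ such that $k\circ g(q,\cdot)\geq K_0$ on $[0,r+\delta]$ for all relevant $q$, then invoke the one-dimensional ${\sf CD}(K_0,N)$ inequality for each $h_q$ (in the spirit of \cite[Lemma A.9]{CavallettiMilman21}) to derive a bound $h_q(t)\leq C(K_0,N,r,\delta)\,h_q(r)$ for $t\in[r,r+\delta]$, which is $\mathfrak q$-integrable on $Q_T(r)$ when $S_T(r)<\infty$ (the case $S_T(r)=+\infty$ being already covered by Fatou).

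For (ii), the absolute continuity---and hence continuity---of $V_T$ is immediate from the identity $V_T(r)=\int_0^r S_T(t)\,\d t$ established in Lemma~\ref{lem:polar_T}. Right-differentiability with derivative $S_T$ then follows from the right-continuity of $S_T$ via the standard fact that $\frac{1}{h}\int_r^{r+h}S_T(t)\,\d t\to S_T(r)$ as $h\downarrow 0$ for any right-continuous function.

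For (iii), the starting point is the pointwise inequality from Definition~\ref{def:deficit}, namely $(\log h_q)'(t)-H_{K,N}(t)\leq \psi_q(t)$, rewritten as
\[
\left(\frac{h_q}{h_{K,N}}\right)'(t)\leq \psi_q(t)\,\frac{h_q(t)}{h_{K,N}(t)}\quad\text{a.e.}
\]
Integrating on $[s,r]$ for each $q\in Q_T(r)\subseteq Q_T(s)$, then integrating over $Q_T(r)$ in $\mathfrak q$, applying Fubini, and exploiting the inclusions $Q_T(r)\subseteq Q_T(s)$ (to bound the subtracted term) and $Q_T(r)\subseteq Q_T(t)$ for $t\in[s,r]$ (to bound the right-hand side), I obtain, with $G\coloneqq S_T/h_{K,N}$ and $\Psi(t)\coloneqq \int_{Q_T(t)}\psi_q(t)\tfrac{h_q(t)}{h_{K,N}(t)}\,\d\mathfrak q$, the fundamental estimate
\[
G(r)-G(s)\leq \int_s^r\Psi(t)\,\d t,
\]
which is exactly the $\alpha=1$ case. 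For $\alpha\in(0,1)$ I decompose $G=J+I$ with $I(r)\coloneqq \int_0^r\Psi$ absolutely continuous and $J\coloneqq G-I$ non-increasing (by the previous inequality), and I algebraically split
\[
G(r)^\alpha-G(s)^\alpha=\bigl[(J(r)+I(r))^\alpha-(J(s)+I(r))^\alpha\bigr]+\bigl[(J(s)+I(r))^\alpha-(J(s)+I(s))^\alpha\bigr].
\]
The first bracket is nonpositive by monotonicity of $x\mapsto x^\alpha$ and $J(r)\leq J(s)$. The second bracket equals $\alpha\int_s^r(J(s)+I(t))^{\alpha-1}\Psi(t)\,\d t$ by the fundamental theorem of calculus applied to the $C^1$ map $u\mapsto (J(s)+u)^\alpha$ with $u=I(t)$, and is bounded by $\alpha\int_s^r G(t)^{\alpha-1}\Psi(t)\,\d t$ using $J(s)+I(t)\geq J(t)+I(t)=G(t)$ combined with $x\mapsto x^{\alpha-1}$ being non-increasing for $\alpha\in(0,1]$. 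Summing the two brackets yields $f_\alpha(r)-f_\alpha(s)\leq 0$ and concludes the proof.
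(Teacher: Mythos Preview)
Your argument is correct and, for item iii), genuinely simpler than the paper's. For i) and ii) you follow essentially the same route as the paper (your explicit domination via the one-sided ${\sf CD}(K_0,N)$ bound $h_q(t)\le C\,h_q(r)$ is exactly the justification the paper's dominated-convergence step tacitly relies on).

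The real divergence is in iii). The paper proceeds \emph{infinitesimally}: it introduces a regularized family $Q_T^\delta(r)\coloneqq Q_T\big(\tfrac{\delta r}{\delta-1}\big)$ to create a buffer $r_q\wedge E_T^q>r$ uniform in $q$, then computes upper right Dini derivatives of $f_\alpha^\delta$, appeals to a ``left-l.s.c.\ plus nonpositive Dini derivative implies non-increasing'' lemma, and finally lets $\delta\to\infty$. This requires careful limit interchanges (Fatou, dominated convergence) and uniform bounds on $h_q$, $(\log h_q)'$ borrowed from the literature. Your approach is \emph{integral}: for fixed $0<s<r$ you integrate the pointwise inequality $(h_q/h_{K,N})'\le \psi_q\,h_q/h_{K,N}$ over $[s,r]$ for each $q\in Q_T(r)$, then over $Q_T(r)$ in $\mathfrak q$, and exploit the inclusions $Q_T(r)\subset Q_T(s)$ and $Q_T(r)\subset Q_T(t)$ to pass to $G$ and $\Psi$. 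This bypasses the $\delta$-regularization and the Dini-derivative machinery entirely. Your treatment of $\alpha<1$ via the decomposition $G=J+I$ with $J$ non-increasing, followed by the two-bracket splitting and the chain rule for $(J(s)+I(\cdot))^\alpha$, is elegant; the paper instead linearizes $x^\alpha$ by concavity inside the Dini-derivative computation.

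Two small points worth tightening: first, define $I(t)\coloneqq\int_s^t\Psi$ (with base point $s$, not $0$), since integrability of $\Psi$ near $0$ is not established and is not needed---only $\int_s^r\Psi<\infty$ matters, which follows from $\Psi\le C\,S_T$ on $[s,r]$ and $S_T\in L^1_{loc}$. Second, in the chain-rule step for the second bracket, note that $t\mapsto (J(s)+I(t))^\alpha$ is absolutely continuous on $[s,r]$ because it is the composition of the absolutely continuous map $u\mapsto (J(s)+u)^\alpha$ on $[0,I(r)]$ with the monotone AC function $I$; this covers the edge case $J(s)=G(s)=0$ where the outer function is merely H\"older at the endpoint. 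With these clarifications your proof is complete and considerably shorter than the original.
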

Before giving the proof of the above, we recall a standard characterization of monotonicity. 
\begin{lemma}
\label{lem:monotonicity}
    Let $-\infty<a< b<+\infty$ and let $f:[a,b]\to\R$ be a function. Then, $f$ is non-increasing in $[a,b]$ if and only if $f$ is left lower semicontinuous in $(a,b]$ and, for every $r\in [a,b)$, 
    \begin{equation}
    \label{eq:right_der}
        \lims_{t\downarrow0}\frac{f(r+t)-f(r)}{t}\leq 0.
    \end{equation}
\end{lemma}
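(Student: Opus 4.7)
The forward implication is immediate: if $f$ is non-increasing on $[a,b]$, then for every $r\in(a,b]$ we have $f(s)\geq f(r)$ for $s<r$, so $\liminf_{s\uparrow r}f(s)\geq f(r)$; and for $r\in[a,b)$ the difference quotient $(f(r+t)-f(r))/t$ is non-positive for all small $t>0$, giving \eqref{eq:right_der}. So the only content is the converse.

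For the converse, the plan is a classical perturbation plus ``sup of a good set'' argument. Fix $a\le r_1<r_2\le b$ and $\eps>0$, and consider the auxiliary function $g_\eps(r)\coloneqq f(r)-\eps r$ on $[a,b]$. Since subtracting $\eps r$ preserves left lower semicontinuity and strictly decreases the upper right Dini derivative by $\eps$, $g_\eps$ is still left lsc on $(a,b]$ and satisfies $\lims_{t\downarrow 0}(g_\eps(r+t)-g_\eps(r))/t\le -\eps<0$ for every $r\in[a,b)$. I will show that $g_\eps(r_2)\le g_\eps(r_1)$; letting $\eps\downarrow 0$ then yields $f(r_2)\le f(r_1)$, which is the desired monotonicity.

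To this end, define the set
\[
T\coloneqq \{r\in[r_1,r_2] : g_\eps(s)\le g_\eps(r_1)\text{ for every }s\in[r_1,r]\},
\]
and let $r^{*}\coloneqq \sup T$. Clearly $r_1\in T$, hence $r^{*}\ge r_1$. The two key steps are:
\begin{itemize}
    \item[(a)] \emph{$r^{*}\in T$.} If $r^{*}=r_1$, this is trivial. Otherwise, take $r_n\in T$ with $r_n\uparrow r^{*}$. For any $s\in[r_1,r^{*})$, eventually $s\le r_n$, so $g_\eps(s)\le g_\eps(r_1)$; and by left lower semicontinuity $g_\eps(r^{*})\le\liminf_{s\uparrow r^{*}}g_\eps(s)\le g_\eps(r_1)$.
    \item[(b)] \emph{$r^{*}=r_2$.} Otherwise $r^{*}<r_2$ and the strict negativity of the upper right Dini derivative at $r^{*}$ gives $g_\eps(r^{*}+t)<g_\eps(r^{*})\le g_\eps(r_1)$ for all sufficiently small $t>0$. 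Combined with (a) this places $r^{*}+t$ in $T$, contradicting the definition of $r^{*}$.
\end{itemize}
Conclusions (a) and (b) together give $r_2\in T$, hence $g_\eps(r_2)\le g_\eps(r_1)$, and the argument is complete.

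The only delicate point is step (a): it is precisely here that the left lower semicontinuity hypothesis is used (to pass the bound on $[r_1,r^{*})$ to the endpoint $r^{*}$); right lower semicontinuity would \emph{not} suffice, which is why the asymmetry between the lsc assumption and the right Dini derivative in the statement is essential. Apart from this, the argument is routine and requires no further structure on $f$.
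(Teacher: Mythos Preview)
The paper does not actually prove this lemma; it is stated as a ``standard characterization of monotonicity'' and left without proof. Your argument is correct and is essentially the classical one: the $\eps$-perturbation to make the upper right Dini derivative strictly negative, followed by the ``largest good point'' argument where left lower semicontinuity closes the interval and the strict Dini bound pushes past it. Your remark on why left (rather than right) lower semicontinuity is the correct hypothesis is also accurate.
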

\begin{proof}[Proof of Lemma \ref{lem:2.2fatto_meglio}]
    Observe that ii) follows from i), thus we only prove i) and iii). Moreover, note that iii) is well-posed since $(q,t)\mapsto \psi_q(t)$ is $\frq \times \Leb^1$-measurable. We first prove i).  Fix $r>0$ and let $r_n \uparrow r$. Then, $Q_T(r)\subset Q_T(r_n)$ by definition and by Fatou's lemma we get
    \[
         S_T(r) \le \limi_{n\to\infty}  \int_{Q_T(r)}h_q(r_n)\,\d\mathfrak q \le \limi_{n\to\infty}  \int_{Q_T(r_n)}h_q(r_n)\,\d\mathfrak q = \limi_{n\to\infty}S_T(r_n).
    \]
    Thus, to conclude i), it remains to show that $S_T(\cdot)$ is right continuous. Hence, fix $r>0$ and let $r_n \downarrow r$. By definition, for every $n\in\N$, $Q_T(r_n)\subset Q_T(r_{n+1})$ and  $Q_T(r)=\bigcup_{n\in\N} Q_T(r_n)$. Consequently, up to extracting a subsequence, $\nchi_{Q_T(r_n)} (q)\to \nchi_{Q_T(r)}(q)$ $\mathfrak q$-a.e.. Claim i) then follows by applying the dominated convergence theorem.
    
    We now prove iii). Fix $\epsilon,R>0$ with $\epsilon<R<\pi\sqrt{\frac{N-1}{K^+}}$. Let $r\in [\epsilon,R]$ and $\delta>0$, and define
    \[
        S_T^\delta(r) \coloneqq \int_{Q_T^\delta(r)}h_q(r)\,\d \mathfrak q,
    \]
    where $Q_T^\delta(r)\coloneqq  Q_T\left(\frac{\delta r}{\delta-1}\right)$ and 
    \begin{equation}
    \label{eq:aux_fun_fun}
        f_\alpha^\delta(r)\coloneqq \left(\frac{S_T^\delta(r)}{h_{K,N}(r)}\right)^\alpha-\alpha\int_0^{r}\int_{Q_T^\delta(s)}\left(\frac{S_T^\delta(s)}{h_{K,N}(s)}\right)^{\alpha-1}\psi_q(s)\frac{h_q(s)}{h_{K,N}(s)}\,\d\mathfrak q\d s.
    \end{equation}
    Observe that, if $r\in[\epsilon,R]$ and $q\in Q_T^\delta(r)$, then $r_q>\epsilon$ and $r\in [\epsilon,r_q\wedge R]$. Fix $r\in [\epsilon,R]$. By construction $Q_T^\delta(r+t)\subset Q_T^\delta(r)$, hence, for sufficiently small $t>0$, it holds
    \[
        S_T^\delta(r+t)= \int_{Q_T^\delta(r+t)}h_q(r+t)\,\d \mathfrak q \le \int_{Q_T^\delta(r)}h_q(r+t)\,\d \mathfrak q.
    \]
    Thus, we may estimate 
    \begin{equation}
    \label{eq:S_T-uff}
        \frac{S_T^\delta(r+t)-S_T^\delta(r)}{t} \le \int_{Q_T^\delta(r)}\frac{h_q(r+t)-h_q(r)}{t}\,\d\mathfrak q.   
    \end{equation}
    From \eqref{eq:S_T-uff}, using the fundamental theorem of calculus and the definition of $\psi_q$, we get
    \begin{align*}
        \frac{S_T^\delta(r+t)-S_T^\delta(r)}{t} &\le  \int_{Q_T^\delta(r)}\dashint_{r}^{r+t} (\log h_q(s) )'h_q(s)\,\d s\d\mathfrak q\le \int_{Q_T^\delta(r)}\dashint_{r}^{r+t}( \psi_q(s)+H_{K,N}(s))h_q(s) \,\d s\d\mathfrak q.
    \end{align*}
    All in all, we can estimate the following difference quotient
    \begin{multline}
    \label{eq:nomi_finiti2}
    \frac1t\left(\frac{S_T^{\delta}(r+t)}{h_{K,N}(r+t)} - \frac{S_T^\delta(r)}{h_{K,N}(r)}\right)  = \frac{ S_T^\delta(r+t)-S_T^\delta(r)}{t h_{K,N}(r) } + \frac{ S_T^\delta(r+t)}{t} \left( \frac{1}{h_{K,N}(r+t)} - \frac{1}{h_{K,N}(r)}\right) \\
        \leq \int_{Q_T^\delta(r)}\dashint_{r}^{r+t}( \psi_q(s)+H_{K,N}(s))\frac{h_q(s)}{h_{K,N}(r)} \,\d s\d\mathfrak q - S_T^\delta(r+t) \dashint_{r}^{r+t}  \frac{h'_{K,N}(s)}{h_{K,N}(s)^2}\,\d s.
    \end{multline}
    Now, the function $[r,r+t]\ni s\mapsto H_{K,N}(s)h_q(s)$ is continuous and uniformly bounded with respect to $q\in Q_T^\delta(r)$ (cf.\ \cite[Lemma 2.15]{CavallettiMondino20}) 
    therefore, by dominated convergence theorem, we get 
    \begin{multline}
    \label{eq:limit_is_zero}
       \lim_{t\downarrow 0} \left(\int_{Q_T^\delta(r)}\dashint_{r}^{r+t}H_{K,N}(s)\frac{h_q(s)}{h_{K,N}(r)} \,\d s\d\mathfrak q- S_T^\delta(r+t) \dashint_{r}^{r+t}  \frac{h'_{K,N}(s)}{h_{K,N}(s)^2}\,\d s\right)\\ 
       = S_T^\delta(r)  \frac{H_{K,N}(r)}{h_{K,N}(r)}- S_T^\delta(r)\frac{h'_{K,N}(r)}{h_{K,N}(r)^2}\,\d s= 0,
    \end{multline}
    where we used the right-continuity of $S_T^\delta$.
    
    We now claim property \eqref{eq:right_der} for $f_1^\delta$ as defined in \eqref{eq:aux_fun_fun} with $\alpha=1$ for every $r \in [\eps,R]$. For brevity, let us denote $L\coloneqq  \lims_{t\downarrow 0} \frac{f_1^\delta(r+t)-f_1^\delta(r)}{t}$ so that, equivalently, we want to show that $L\le 0$. Using \eqref{eq:nomi_finiti2} and \eqref{eq:limit_is_zero}, we estimate
    \begin{align*}
         L & = \lims_{t\downarrow 0}\left(
        \frac1t\left(\frac{S_T^\delta(r+t)}{h_{K,N}(r+t)} - \frac{S_T^\delta(r)}{h_{K,N}(r)}\right) - \dashint_{r}^{r+t}\int_{Q_T^\delta(s)}\psi_q(s)\frac{h_q(s)}{h_{K,N}(s)} \,\d \mathfrak q \d s\right)\\
        & \leq \lims_{t\downarrow 0} \left(\int_{Q_T^\delta(r)}\dashint_{r}^{r+t}\psi_q(s)\frac{h_q(s)}{h_{K,N}(r)} \,\d s\d\mathfrak q - \dashint_{r}^{r+t}\int_{Q_T^\delta(s)}\psi_q(s)\frac{h_q(s)}{h_{K,N}(s)} \,\d \mathfrak q \d s\right)\\
        &= \lims_{t\downarrow 0} \left(\int_{Q_T^\delta(r)}\dashint_{r}^{r+t}\psi_q(s)\frac{h_q(s)}{h_{K,N}(r)} \,\d s\d\mathfrak q - \int_{Q_T^\delta(r+t)}\dashint_{r}^{r+t}\psi_q(s)\frac{h_q(s)}{h_{K,N}(s)} \,\d \mathfrak q \d s\right),
    \end{align*}
    where in the last step, we used the inclusion $Q_T^\delta(r+t)\subset Q_T^\delta(s)$ for every $s\in[r,r+t]$, together with Fubini's theorem. Since $h_{K,N}$ is smooth and positive on $[r,r+t]$, in the limit above we can replace $h_{K,N}(s)$ with $h_{K,N}(r)$, and obtain
    \begin{align*}
        L  &\leq \frac{1}{h_{K,N}(r)} \lims_{t\downarrow 0} \left(\int_{Q_T^\delta(r)}\dashint_{r}^{r+t}\psi_q(s)h_q(s) \d s\d\mathfrak q - \int_{Q_T^\delta(r+t)}\dashint_{r}^{r+t}\psi_q(s)h_q(s) \d \mathfrak q \d s\right)\\
        &= \frac{1}{h_{K,N}(r)} \lims_{t\downarrow 0} \int_Q\dashint_{r}^{r+t}\psi_q(s)h_q(s) \,\d s\left(\chi_{Q_T^\delta(r)}(q)-\chi_{Q_T^\delta(r+t)}(q)\right)\d\mathfrak q.
    \end{align*}
    Note that $(\log h_q)'$ is uniformly bounded on $[r,r+t]$, with respect to $q\in Q_T^\delta(r)$, cf.\ \cite[Eq.\ 2.13]{CavallettiMondino20}. Hence, the function $[r,r+t]\ni s\mapsto\psi_q(s)h_q(s)$ is uniformly bounded above with respect to $q\in Q_T^\delta(r)$. Thus, we apply Fatou's lemma to conclude that $L\le0$, thus proving \eqref{eq:right_der} for $f_1^\delta$ with $\alpha=1$. 

    We claim that \eqref{eq:right_der} holds for $f_\alpha^\delta$, for all $\alpha\in (0,1)$.
    Indeed, on the one hand, as a consequence of \eqref{eq:nomi_finiti2} and \eqref{eq:limit_is_zero}, given $\eta>0$, there exists $t_\eta>0$ such that for all $t\in(0,t_\eta)$, we have 
    \begin{equation*}
        \frac{S_T^\delta(r+t)}{h_{K,N}(r+t)} \leq \frac{S_T^\delta(r)}{h_{K,N}(r)}+t(A+\eta),
    \end{equation*}
    having set 
    \begin{equation*}
        A\coloneqq \int_{Q_T^\delta(r)} \dashint_r^{r+t}\psi_q(s)\frac{1}{h_{K,N}(r)}h_q(s) \,\d s\d\mathfrak q<\infty.    
    \end{equation*}
     On the other hand, by concavity of $\varphi(x)\coloneqq x^\alpha$ it holds $\varphi(x+y)\leq\varphi(x)+\varphi'(x)y$ and this implies
    \begin{equation*}
        \left(\frac{S_T^\delta(r)}{h_{K,N}(r)}+t(A+\eta)\right)^\alpha - \left(\frac{S_T^\delta(r)}{h_{K,N}(r)}\right)^\alpha \leq \alpha\left(\frac{S_T^\delta(r)}{h_{K,N}(r)}\right)^{\alpha-1}t(A+\eta).
    \end{equation*}
    It follows that 
    \begin{align}
    \label{eq:difference_quotient_f_alpha}
    \frac{f_\alpha^\delta(r+t)-f_\alpha^\delta(r)}{t} &\leq\alpha\left(\frac{S_T^\delta(r)}{h_{K,N}(r)}\right)^{\alpha-1}(A+\eta)-\alpha \dashint_r^{r+t}\int_{Q_T^\delta(s)}\left(\frac{S_T^\delta(s)}{h_{K,N}(s)}\right)^{\alpha-1}\psi_q(s)\frac{h_q(s)}{h_{K,N}(s)}\d\mathfrak q\d s. 
    \end{align}
    Hence, we conclude the proof of \eqref{eq:right_der} for $f_\alpha^\delta$ if we show that 
    \begin{equation}
    \label{eq:limsup_claim}
        \lims_{t\downarrow 0} \left(\left(\frac{S_T^\delta(r)}{h_{K,N}(r)}\right)^{\alpha-1}A- \dashint_r^{r+t}\int_{Q_T^\delta(s)}\left(\frac{S_T^\delta(s)}{h_{K,N}(s)}\right)^{\alpha-1}\psi_q(s)\frac{h_q(s)}{h_{K,N}(s)}\d\mathfrak q\d s\right) \leq 0,
    \end{equation}
    as, together with \eqref{eq:difference_quotient_f_alpha}, it would imply
    \begin{equation*}
        \lims_{t\downarrow 0}\frac{f_\alpha^\delta(r+t)-f_\alpha^\delta(r)}{t} \leq \eta\alpha \left(\frac{S_T^\delta(r)}{h_{K,N}(r)}\right)^{\alpha-1},
    \end{equation*}
    and this gives \eqref{eq:right_der} for $f_\alpha^\delta$ by the arbitrariness of $\eta>0$. For proving \eqref{eq:limsup_claim}, we use the expression of $A$, the inclusion $Q_T^\delta(r+t)\subset Q_T^\delta(s)$ for every $s\in [r,r+t]$ and Fubini's theorem to estimate 
    {\allowdisplaybreaks
    \begin{align}
    \lims_{t\downarrow 0}& \left(\left(\frac{S_T^\delta(r)}{h_{K,N}(r)}\right)^{\alpha-1}A- \dashint_r^{r+t}\int_{Q_T^\delta(s)}\left(\frac{S_T^\delta(s)}{h_{K,N}(s)}\right)^{\alpha-1}\psi_q(s)\frac{h_q(s)}{h_{K,N}(s)}\d\mathfrak q\d s\right) \nonumber \\
    &=\lims_{t\downarrow 0}\int_{Q_T^\delta(r)} \dashint_r^{r+t}\psi_q(s)\frac{h_q(s)}{h_{K,N}(r)}\left(\left(\frac{S_T^\delta(r)}{h_{K,N}(r)}\right)^{\alpha-1} -\chi_{Q_T^\delta(r+t)}\left(\frac{S_T^\delta(s)}{h_{K,N}(s)}\right)^{\alpha-1}\right)\d s\d\mathfrak q \nonumber \\
    & \leq C\lims_{t\downarrow 0}\int_{Q_T^\delta(r)} \dashint_r^{r+t}\left|\left(\frac{S_T^\delta(r)}{h_{K,N}(r)}\right)^{\alpha-1} -\chi_{Q_T^\delta(r+t)}\left(\frac{S_T^\delta(s)}{h_{K,N}(s)}\right)^{\alpha-1}\right|\d s\d\mathfrak q,\nonumber \\
    & \leq C\lims_{t\downarrow 0}\int_{Q_T^{M,\delta}(r)} \dashint_r^{r+t}\left|\left(\frac{S_T^{M,\delta}(r)}{h_{K,N}(r)}\right)^{\alpha-1} -\left(\frac{S_T^{M,\delta}(s)}{h_{K,N}(s)}\right)^{\alpha-1}\right|\d s\,\d \mathfrak q\label{eq:term1}\\
    &\qquad\qquad\qquad\qquad\qquad\qquad\qquad+C \lim_{t \downarrow 0} \dashint_r^{r+t}\left(\frac{S_T^{M,\delta}(s)}{h_{K,N}(s)}\right)^{\alpha-1}\d s \,  {\mathfrak q}(Q_T^{M,\delta}(r)\setminus Q_T^{M,\delta}(r+t))\label{eq:term2}.
    \end{align}
    }
    \!\!where $C>0$ is an upper bound for the function $ \psi_q(s)\frac{h_q(s)}{h_{K,N}(r)}$ on $[r,r+t]\times Q_T^\delta(r)$. Now, the term \eqref{eq:term2} converges to $0$, since ${\mathfrak q}(Q_T^{M,\delta}(r)\setminus Q_T^{M,\delta}(r+t))\to 0$ as $t\to 0^+$ and the ratio $\frac{S_T^{M,\delta}(s)}{h_{K,N}(s)}$ is uniformly bounded on $[r,r+t]$. For the the term \eqref{eq:term1}, we observe that the integrand function 
    \begin{equation*}
        g(s)\coloneqq \left|\left(\frac{S_T^{M,\delta}(r)}{h_{K,N}(r)}\right)^{\alpha-1} -\left(\frac{S_T^{M,\delta}(s)}{h_{K,N}(s)}\right)^{\alpha-1}\right|
    \end{equation*}
    is right-continuous and bounded on $[r,r+t]$. Therefore, by the fundamental theorem of calculus, the right-derivative of $x\mapsto\int_0^x g(s)\d s$ at $r$ is $g(r)=0$. Hence, by Fatou's lemma, also \eqref{eq:term1} converges to $0$. This shows \eqref{eq:limsup_claim}, and thus establishes \eqref{eq:right_der} for $f_\alpha^\delta$.

    We are in position to conclude the proof of iii). Since $f^\delta_\alpha$ is left lower semicontinuous and satisfies \eqref{eq:right_der} on $[\epsilon,R]$, it is non-increasing on $[\epsilon,R]$ as a consequence of Lemma \ref{lem:monotonicity}. The conclusion of the proof now follows since $f^\delta_\alpha$ converges pointwise to $f_\alpha$ as $\delta\uparrow\infty$ (note that $\frac{\delta r}{\delta-1}\downarrow r$, hence $Q_T^\delta(r)\subset Q_T^{\delta'}(r)$ for $\delta'\geq\delta$ and $Q_T(r)=\bigcup_{\delta>0}Q_T^\delta(r)$, meaning that $\nchi_{Q_T^\delta(r)}\to \nchi_{Q_T(r)}$ in $L^1(\mathfrak q)$ as $\delta\to\infty$) and by arbitrariness of $0<\epsilon<R<\pi\sqrt{\frac{N-1}{K^+}}$.
\end{proof}
\subsection{Proof of Bishop-Gromov comparison}
We prove our first main result. Observe that, under the standing assumptions, Lemma \ref{lem:average integral tend zero} holds, hence we are in position to apply Proposition \ref{prop:mean vs deficit 1D}.

\begin{proof}[Proof of Theorem \ref{thm:BG}]
    Let us localize the problem using the localization method developed in \cref{thm:localization} with the $1$-Lipschitz function $u= \sfd_x$. More precisely, we have the disintegration
    \[
    \mm= \int \mm_q \, \d \mathfrak q,
    \]
    and, for $\mathfrak q$-a.e.\ $q\in Q$, $\mm_q = g(q,\cdot)_\sharp(h_q \Leb^1)$, where $h_q:[0,r_q]\to [0,\infty)$ is a ${\sf CD}( k\circ g(q,\cdot) ,N)$ density on ${\rm Dom}(g(q,\cdot))$. Denote by $\psi_q$ the mean curvature deficit of $h_q$ as in \eqref{eq:mean_curvature_def_q} and $V_T(r)=\mm(B_r(x)\cap T)$. By Lemma \ref{lem:polar_T}, we know that $V_T$ is locally absolutely continuous, hence differentiable for a.e.\ $r$ and
    \begin{equation}
            \frac{\d}{\d r}\frac{V_T(r)}{v_{K,N}(r)} = 
    \frac{V_T'(r)\,v_{K,N}(r)-V_T(r)\,v_{K,N}'(r)}{(v_{K,N}(r))^2} \eqqcolon \frac{N(r)}{D(r)},\qquad\text{a.e. }r\in (0,\infty).
    \label{eq:ODE BG}
    \end{equation}
    Using Lemma \ref{lem:2.2fatto_meglio}, we can estimate the numerator as following
    \begin{equation}\label{eq:numerator_BG}
    \begin{split}
        N(r)&=\int_0^r S_T(r)h_{K,N}(t)-S_T(t)h_{K,N}(r)\d t =  \int_0^r  \left(\frac{S_T(r)}{h_{K,N}(r)} - \frac{S_T(t)}{h_{K,N}(t)}\right) h_{K,N}(r)h_{K,N}(t)\,\d t \\
        &\le\int_0^r h_{K,N}(r)h_{K,N}(t)\int_t^r \int_{Q_T(s)} \psi_q(s)\frac{h_q(s)}{h_{K,N}(s)}\,\d \mathfrak q\d s \d t\\
        &\le \int_0^r h_{K,N}(r)h_{K,N}(t)\underbrace{\int_t^r \left( \int_{Q_T(s)} \psi_q(s)^{2p-1}h_q(s)\,\d \mathfrak q\right)^{\frac{1}{2p-1}} S_T(s)^{1-\frac{1 }{2p-1}} \frac1{h_{K,N}(s)}\,\d s}_{\eqqcolon{\mathcal I}(t,r)}\d t,  
    \end{split}
    \end{equation}
    where in the last inequality we used H\"{o}lder's inequality.
    From here, we distinguish two cases.

    \noindent\textsc{Case $r\le \frac{\pi}{2}\sqrt{\frac{N-1}{K^+}}$}. 
    We estimate here ${\mathcal I}(t,r)$ for $t\in (0,r)$ with \cref{prop:mean vs deficit 1D} as follows
    \begin{align*}
        {\mathcal I}(t,r) &\le  \int_t^r
        \left( \int_{Q_T(s)}  \int_0^s  | (k\circ g(q,\tau) -K)\wedge  0 |^p h_q(\tau)\, \d \tau\,\d \mathfrak q\right)^{\frac{1}{2p-1}} S_T(s)^{1-\frac{1 }{2p-1}} h_{K,N}(s)^{-1}\,\d s \\ 
        & \leq  \alpha_{N,p}^{\frac{1}{2p-1}} \rho_p^k(T,K)^{\frac{1}{2p-1}}  \int_t^rS_T(s)^{1-\frac{1 }{2p-1}} h_{K,N}(s)^{-1}\,\d s,
    \end{align*}
    having also used \cref{lem:polar_T}, where $\alpha_{N,p}$ is the constant appearing in \cref{prop:mean vs deficit 1D}.  Notice that, since we have $t\le s \le r\le \frac{\pi}{2}\sqrt{\frac{N-1}{K^+}}$, for any $K\in\R$ we have $h_{K,N}(s)^{-1} \le h_{K,N}(t)^{-1}$. In addition, applying Jensen's inequality with the concave function $\varphi(x)=x^{1-\frac1{2p-1}}$, we obtain:
    \begin{equation}
    \label{eq:jensen}
        \int_t^rS_T(s)^{1-\frac1{2p-1}}\d s\leq (r-t)^{\frac1{2p-1}}\left(V_T(r)-V_T(t)\right)^{1-\frac1{2p-1}}.
    \end{equation}
    Thus, denoting by $\alpha'_{N,p}\coloneqq \alpha_{N,p}^{\frac{1}{2p-1}}$, we estimate ${\mathcal I}(t,r)$ as following
    \begin{equation}\label{eq:case 1 end}
    \begin{aligned}
        {\mathcal I}(t,r) &\le  \alpha'_{N,p} \rho_p^k(T,K)^{\frac{1}{2p-1}} h_{K,N}(t)^{-1}  \int_t^r S_T(s)^{1-\frac{1 }{2p-1}}\,\d s \\
        &\le \alpha'_{N,p} \rho_p^k(T,K)^{\frac{1}{2p-1}}  h_{K,N}(t)^{-1} (r-t)^{\frac1{2p-1}}\left(V_T(r)-V_T(t)\right)^{1-\frac1{2p-1}}\\
        &\le \alpha'_{N,p} \rho_p^k(T,K)^{\frac{1}{2p-1}}  h_{K,N}(t)^{-1} r^{\frac1{2p-1}}V_T(r)^{1-\frac1{2p-1}},
    \end{aligned}
    \end{equation}
    where in the last inequality we used $r-t\leq r$ and $V_T(r)-V_T(t)\leq V_T(r)$.
    
    \noindent\textsc{Case $\frac{\pi}{2}\sqrt{\frac{N-1}{K}}< r\le R \le \pi \sqrt{\frac{N-1}{K}} $ and $K>0$}. 
    We carry on estimating ${\mathcal I}(t,r)$ in this case by splitting the integral as follows
    \[
        {\mathcal I}(t,r)= {\mathcal I}\left(\frac{\pi}{2}\sqrt{\tfrac{N-1}{K}}\wedge t,\frac{\pi}{2}\sqrt{\tfrac{N-1}{K}}\right) + {\mathcal I}\left( \frac{\pi}{2}\sqrt{\tfrac{N-1}{K}}\vee t,r \right).
    \]
    The first addendum can be handled by the analysis of the previous case using \eqref{eq:case 1 end}. Hence, we focus on the second addendum: we multiply and divide in the integral by $h_{K,N}(s)^{\tfrac{4p-N-1}{N-1}}$ and estimate
    \begin{equation*}
    \begin{aligned}
        {\mathcal I}\bigg( \frac{\pi}{2} &\sqrt{\tfrac{N-1}{K}}\vee t,r \bigg) \\
        &= \int_{\tfrac{\pi}{2}\sqrt{\frac{N-1}{K}}\vee t}^r \left( \int_{Q_T(s)} h_{K,N}(s)^{\tfrac{4p-N-1}{N-1}}\psi_q(s)^{2p-1}h_q(s)\,\d \mathfrak q\right)^{\frac{1}{2p-1}} \frac{ S_T(s)^{1-\frac{1 }{2p-1}}}{h_{K,N}(s)^{1+\tfrac{4p-N-1}{(N-1)(2p-1)}}}\,\d s\\
        & \leq \frac{ \alpha'_{N,p}}{h_{K,N}(r)^{1+\tfrac{4p-N-1}{(N-1)(2p-1)}}} \rho^k_p(T,K)^{\frac{1}{2p-1}}   \int_{\tfrac{\pi}{2}\sqrt{\frac{N-1}{K}}\vee t}^rS_T(s)^{1-\frac{1 }{2p-1}}\,\d s \\
        &\overset{\eqref{eq:jensen}}{\leq} \frac{ \alpha'_{N,p}}{h_{K,N}(r)^{1+\tfrac{4p-N-1}{(N-1)(2p-1)}}} \rho^k_p(T,K)^{\frac{1}{2p-1}}\left(\tfrac{\pi}{2}\sqrt{\tfrac{N-1}{K}}\right)^{\frac1{2p-1}}V_T(r)^{1-\frac1{2p-1}}.
    \end{aligned}
    \end{equation*}
    where, after the application of Jensen's inequality \eqref{eq:jensen}, we used $r-\left(\tfrac{\pi}{2}\sqrt{\frac{N-1}{K}}\vee t\right)\leq \tfrac{\pi}{2}\sqrt{\frac{N-1}{K}}$. Therefore, using \eqref{eq:case 1 end}, the above and the fact that $h_{K,N}$ has a maximum at $\tfrac{\pi}{2}\sqrt{\tfrac{N-1}{K}}$, we obtain
    \begin{align*}
        {\mathcal I}(t,r) &\le  \alpha'_{N,p} \rho_p^k(T,K)^{\frac{1}{2p-1}}   \left(\tfrac{\pi}{2}\sqrt{\tfrac{N-1}{K}}\right)^{\frac1{2p-1}}h_{K,N}\left(\tfrac{\pi}{2}\sqrt{\tfrac{N-1}{K}}\wedge t\right)^{-1}V_T\left(\tfrac{\pi}{2}\sqrt{\tfrac{N-1}{K}}\right)^{1-\frac1{2p-1}} \\
        &\qquad\qquad\qquad\qquad +\alpha'_{N,p} h_{K,N}(r)^{-1-\tfrac{4p-N-1}{(N-1)(2p-1)}} \rho^k_p(T,K)^{\frac{1}{2p-1}}\left(\tfrac{\pi}{2}\sqrt{\tfrac{N-1}{K}}\right)^{\frac1{2p-1}}V_T(r)^{1-\frac1{2p-1}}\\
        &\le \left( \alpha_{N,p}\tfrac{\pi}{2}\sqrt{\tfrac{N-1}{K}}\rho_p^k(T,K) \right)^{\frac1{2p-1}}V_T(r)^{1-\frac1{2p-1}}\Bigg(  h_{K,N}(t)^{-1} +h_{K,N}(r)^{-1-\tfrac{4p-N-1}{(N-1)(2p-1)}}\Bigg),
    \end{align*}
    having also used that $V_T$ is monotone non-decreasing.

    \noindent\textsc{Conclusion}. All in all, by integrating the identity
    \[
    \frac{\d }{\d t} \left(\frac{V_T(t)}{v_{K,N}(t)}\right)^{\frac1{2p-1}}= \frac{1}{2p-1}\left(\frac{V_T(t)}{v_{K,N}(t)}\right)^{\frac1{2p-1}-1} \frac{N(t)}{D(t)},\qquad \text{a.e.\ }t
    \]
    from $r$ to $R$, and estimating $N(\cdot)$ by combining \eqref{eq:numerator_BG} and the estimates for ${\mathcal I}(t,r)$ from the two cases,  we finally get  \eqref{eq:bishop_gromov_quantitative} with the constant
    \begin{equation}\label{eq:CpKNR}
    \begin{aligned}
        C_{K,N,p}(R) &\coloneqq  \left(\frac{N-1}{(2p-1)(2p-N)}\right)^{\frac{p-1}{2p-1}}  \Bigg[ \int_0^{\tfrac{\pi}{2}\sqrt{\frac{N-1}{K^+}}\wedge R}  h_{K,N}(t)\left( \frac{t}{v_{K,N}(t)}\right)^{1+\frac{1}{2p-1}}\, \d t \\
        & +  \left(\tfrac{\pi}{2}\sqrt{\tfrac{N-1}{K^+}} \right)^{\frac1{2p-1}}\int_{\tfrac{\pi}{2}\sqrt{\frac{N-1}{K^+}}}^{\frac{\pi}{2}\sqrt{\frac{N-1}{K^+}}\vee R} \left(\frac{1}{v_{K,N}(t)}\right)^{\frac1{2p-1}+1}\Big(th_{K,N}(t)+ \frac{v_{K,N}(t)}{h_{K,N}(t)^{\tfrac{4p-N-1}{(N-1)(2p-1)}}}\Big) \, \d t  \Bigg],
    \end{aligned}
    \end{equation}
    where the second addendum is set to zero if $K\le 0$. Notice that, recalling the assumption $p>N/2$ and taking into account the asymptotics $v_{K,N}(t) \approx t^N$, $h_{K,N}(t)\approx t^{N-1}$ for $t$ small, it holds
    \[
      \int_0^{\frac{\pi}{2}\sqrt{\tfrac{N-1}{K^+}}\wedge R}  h_{K,N}(t)\left( \frac{t}{v_{K,N}(t)}\right)^{1+\frac{1}{2p-1}}\, \d t <\infty,
    \]
    whence $  C_{K,N,p}(R)<\infty$. We also observe that, if $K<0$ and $t>1$, then $\frac 14 \exp\left( t(N-1)\sqrt{\tfrac{K}{N-1}}\right) \le h_{K,N}(t) \le   \exp\left( t(N-1)\sqrt{\tfrac{K}{N-1}}\right)$, whence $(0,\infty)\ni R\mapsto C_{K,N,p}(R)$ is globally bounded. Finally, since $\theta_N(x) = \lim_{r \downarrow 0} v(r)/v_{K,N}(r)$ (it is independent of $K$), we obtain \eqref{eq:bishop_gromov_quantitative_theta} and the proof is concluded.
\end{proof}
\begin{remark}\label{rem:Ket vs Noi} \rm
    Theorem \ref{thm:BG} has been partially addressed in \cite[Cor. 3.4, 3.5]{Ket21} (limited to $K\le 0$ and balls), without imposing the assumption $\mm(B_\varrho(x))=o(\varrho)$. To derive this estimate, \cite{Ket21} relies on a one-dimensional analysis built upon mean curvature comparison estimates, previously developed for \emph{smooth} polar coordinates in \cite{PetersenSprouse98, PetersenWei97} and later sharpened in \cite{Aubry2007}. The latter is reported in \cite[Prop. 4.1]{Ket21} for the smooth weight $\omega(r) = \sin_{\kappa/(N-1)}^{N-1}(r)$, but applied in \cite[Step 3]{Ket21} to \emph{nonsmooth} weights arising from the disintegration of $\mm$. In contrast, Proposition \ref{prop:mean vs deficit 1D} shows via a regularization argument that such comparison principles remain valid for nonsmooth weights under an additional boundary condition. Our extra assumption $\mm(B_\varrho(x))=o(\varrho)$, together with Lemma \ref{lem:average integral tend zero}, guarantees that this condition is met, thereby enabling the proof of Theorem \ref{thm:BG}. This assumption rules out settings that are too collapsed (cf. Remark \ref{rem:necessity theta 1+c}). \fr
\end{remark}
\subsection{Consequences of Bishop-Gromov comparison}
We deduce uniform local doubling properties of the reference measure $\mm$ provided the \emph{scaling invariant integral curvature deficit} is small. For a bounded star-shaped set $T$ at $x\in\X$ such that $T\subset B_R(x)$ for some $R>0$, the latter is defined as
\begin{equation*}
    R^2\left(\frac{\rho^k_p(T,K)}{\mm(T)}\right)^{\frac 1p}.
\end{equation*} 
The above quantity is indeed invariant for rescalings of both the distance $\sfd$ and the measure $\mm$ in the following sense: if $(\X',\sfd',\mm') \coloneqq (\X,\alpha\cdot \sfd,\beta\cdot \mm)$ for $\alpha,\beta>0$, then it holds that $T\subset B'_{R'}(x) $ where $B'_r(x) = \{y \in \X:\,\alpha \sfd_x(y)<r\}$ and $R' \coloneqq \alpha R$. In particular, we have
    \begin{equation}
        \label{eq:scaling}
        R^2\left(\frac{\rho^k_p(T,K)}{\mm(T)}\right)^{\frac 1p} = (R')^2 \left( \frac{\int_{T} | \alpha^{-2}(k-K)\wedge 0 |^p\,\d \mm'}{\mm'(T)}\right)^\frac{1}{p} =   (R')^2\left(\frac{\rho^{k/\alpha^2}_p(T,K/\alpha^2)}{\mm'(T)}\right)^{\frac 1p},
    \end{equation}
    where the latter integral deficit is computed with $\mm'$.

\begin{proposition}\label{prop:doubling uniform at x}
   For every $N>1,K\in\R,p>N/2$ and $0< R\le \pi\sqrt{\tfrac{N-1}{K^+}}$ there exist $A_{K,N,p,R}>0$ with $A_{0,N,p}\coloneqq A_{0,N,p,R}$ independent of $R$ if $K=0$ such that the following holds. Let $(\X,\sfd,\mm)$ be an essentially non-branching ${\sf CD}(k,N)$ space for some $k\colon \X\to\R$ admissible and suppose that $x \in \X$ is such that $\mm(B_\varrho(x))=o(\varrho)$. If $T$ is a star-shaped set at $x$ with $T\subset B_R(x)$ and
   \[
        \eps\coloneqq R^2\left(\frac{\rho^k_p(T,K)}{\mm(T)}\right)^{\frac 1p} \le   A_{K,N,p,R},
    \]
    then, it holds
   \begin{equation}\label{eq:uniform doubling starshaped}
        \left(1-2 C_{K,N,p}(R){v_{K,N}(R)^{\frac{1}{2p-1}}}  R^{-\frac{2p}{2p-1}} \eps^{\frac{p}{2p-1}}\right)^{2p-1}\frac{\mm(B_r(x)\cap T)}{\mm(B_t(x)\cap T)}\le \frac{v_{K,N}(r)}{v_{K,N}(t)},\qquad\forall\, 0< t\le r\le R,  
    \end{equation}
    where $ C_{K,N,p}(R)$ is given by Theorem \ref{thm:BG}. Moreover, choosing $T=B_R(x)$, it holds
    \[  
        \frac{\mm(B_r(x))}{\mm(B_t(x))}\le 2 \frac{v_{K,N}(r)}{v_{K,N}(t)},\qquad\forall\,0< t\leq r\le R,
    \]
\end{proposition}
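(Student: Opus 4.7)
The plan is to derive both inequalities from two coupled applications of Theorem \ref{thm:BG}. First I would rewrite the smallness parameter in a more workable form: using $\eps^p = R^{2p}\rho^k_p(T,K)/\mm(T)$, the quantity
\[
\delta \coloneqq 2 C_{K,N,p}(R)\, v_{K,N}(R)^{\frac{1}{2p-1}} R^{-\frac{2p}{2p-1}} \eps^{\frac{p}{2p-1}}
\]
appearing inside the parenthesis of \eqref{eq:uniform doubling starshaped} equals $2 C_{K,N,p}(R)\bigl(v_{K,N}(R)\rho^k_p(T,K)/\mm(T)\bigr)^{1/(2p-1)}$. I will declare $A_{K,N,p,R}$ so that $\delta\le 1$, and later shrink it further to enforce $(1-\delta)^{2p-1}\ge 1/2$; both requirements translate into thresholds on $\eps$ depending only on $K,N,p,R$.

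Next I would apply Theorem \ref{thm:BG} with the pair of radii $(r,R)$. Since $T\subset B_R(x)$ gives $\mm(B_R(x)\cap T)=\mm(T)$ and, by the very definition of $\delta$, $C_{K,N,p}(R)\rho^k_p(T,K)^{1/(2p-1)} = (\delta/2)(\mm(T)/v_{K,N}(R))^{1/(2p-1)}$, the inequality \eqref{eq:bishop_gromov_quantitative} rearranges to
\[
\left(1-\frac{\delta}{2}\right)\left(\frac{\mm(T)}{v_{K,N}(R)}\right)^{\frac{1}{2p-1}} \le \left(\frac{\mm(B_r(x)\cap T)}{v_{K,N}(r)}\right)^{\frac{1}{2p-1}}.
\]
For $\delta\le 1$ this gives the reverse estimate $(\mm(T)/v_{K,N}(R))^{1/(2p-1)} \le 2\,(\mm(B_r(x)\cap T)/v_{K,N}(r))^{1/(2p-1)}$, which plays the role of a quantitative one-sided Bishop-Gromov monotonicity.

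Then I would apply Theorem \ref{thm:BG} with $(t,r)$, using the monotonicity of $C_{K,N,p}(\cdot)$ to bound the error by $(\delta/2)(\mm(T)/v_{K,N}(R))^{1/(2p-1)}$, and insert the reverse estimate from the previous step. The outcome is
\[
\left(\frac{\mm(B_r(x)\cap T)}{v_{K,N}(r)}\right)^{\frac{1}{2p-1}} - \left(\frac{\mm(B_t(x)\cap T)}{v_{K,N}(t)}\right)^{\frac{1}{2p-1}} \le \delta\,\left(\frac{\mm(B_r(x)\cap T)}{v_{K,N}(r)}\right)^{\frac{1}{2p-1}}.
\]
Rearranging and raising to the $(2p-1)$-th power is exactly \eqref{eq:uniform doubling starshaped}. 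The second, doubling claim then follows by applying the first with $T=B_R(x)$ and choosing $A_{K,N,p,R}$ small enough so that $(1-\delta)^{2p-1}\ge 1/2$.

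Finally, the $R$-independence of $A_{0,N,p}$ is a short calculation: substituting \eqref{eq:Cp_ZERO_NR} and $v_{0,N}(R)=R^N/N$ into the formula for $\delta$ makes the exponents of $R$ telescope to zero, leaving an expression of the form $c_{N,p}\eps^{p/(2p-1)}$. I expect no conceptual obstacle beyond spotting the double use of Theorem \ref{thm:BG}; the only subtle point is that, without a monotonicity statement, one closes the argument by converting the $(r,R)$ comparison into a pointwise lower bound on the Bishop-Gromov density and reabsorbing it into the $(t,r)$ comparison.
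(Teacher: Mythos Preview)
Your proposal is correct and follows essentially the same approach as the paper: both arguments apply Theorem \ref{thm:BG} twice, first with the pair $(r,R)$ to obtain the reverse bound $\big(\mm(T)/v_{K,N}(R)\big)^{1/(2p-1)}\le 2\big(\mm(B_r(x)\cap T)/v_{K,N}(r)\big)^{1/(2p-1)}$, then with $(t,r)$ to reabsorb the error term, and the $K=0$ computation is handled identically. The only difference is cosmetic---the paper multiplies through by $v_{K,N}(r)^{1/(2p-1)}$ before iterating, while you keep the Bishop--Gromov ratios throughout.
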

\begin{proof}
    The last conclusion is obvious once we show \eqref{eq:uniform doubling starshaped}, possibly decreasing the constant $A_{K,N,p,R}$. To this aim, rearranging in \eqref{eq:bishop_gromov_quantitative}, we get
    \begin{align}
    \label{eq:guess_what_itsBG}
         \mm(B_r(x)\cap T)^{\frac{1}{2p-1}} &\le \left(\frac{v_{K,N}(r)}{v_{K,N}(t)} \mm(B_t(x)\cap T)\right)^{\frac{1}{2p-1}} +  C_{K,N,p}(R) v_{K,N}(r)^{\frac{1}{2p-1}} R^{-\frac{2p}{2p-1}} \mm(T)^{\frac{1}{2p-1}} \eps^{\frac{p}{2p-1}},
    \end{align}
    for all $t\le r\le R$, where we used that $T\subset B_R(x)$. By \eqref{eq:guess_what_itsBG} with $(r,R)$ in place of $(t,r)$, we get
    \begin{align*}
         \mm(T)^{\frac{1}{2p-1}} &\le \left(\frac{v_{K,N}(R)}{v_{K,N}(r)} \mm(B_r(x)\cap T)\right)^{\frac{1}{2p-1}} +  C_{K,N,p}(R) v_{K,N}(R)^{\frac{1}{2p-1}} R^{-\frac{2p}{2p-1}} \mm(T)^{\frac{1}{2p-1}} \eps^{\frac{p}{2p-1}}\\
        &\le \left(\frac{v_{K,N}(R)}{v_{K,N}(r)} \mm(B_r(x)\cap T)\right)^{\frac{1}{2p-1}} + \frac12\mm(T)^{\frac{1}{2p-1}},
    \end{align*}
    provided that $A_{K,N,p,R}>0$ is chosen small enough so that 
    \begin{equation}
    \label{eq:choice_B}
       C_{K,N,p}(R) v_{K,N}(R)^{\frac{1}{2p-1}} R^{-\frac{2p}{2p-1}}A_{K,N,p,R}^\frac{p}{2p-1}<\frac12.
    \end{equation}
    This implies $\mm(T)^{\frac{1}{2p-1}} \leq 2\left(\frac{v_{K,N}(R)}{v_{K,N}(r)} \mm(B_r(x)\cap T)\right)^{\frac{1}{2p-1}} $ and plugging this in \eqref{eq:guess_what_itsBG} we obtain,
    \begin{multline*}
        \mm(B_r(x)\cap T)^{\frac{1}{2p-1}} 
         \le \left(\frac{v_{K,N}(r)}{v_{K,N}(t)} \mm(B_t(x)\cap T)\right)^{\frac{1}{2p-1}} \\
          + 2 C_{K,N,p}(R) R^{-\frac{2p}{2p-1}}\left(v_{K,N}(R) \mm(B_r(x)\cap T)\right)^{\frac{1}{2p-1}} \eps^{\frac{p}{2p-1}},
    \end{multline*}
    for all $t\le r\le R$.  Dividing by $\mm(B_t(x)\cap T)^{\frac1{2p-1}}$, and by the choice of $A_{K,N,p,R}$, we finally get \eqref{eq:uniform doubling starshaped}.

    To conclude the proof, note that, if $K=0$, then $A_{0,N,p,R}$ can be taken small independently of $R$. Indeed, by the explicit formula of $ C_{K,N,p}(R)$ when $K=0$ (cf. \eqref{eq:Cp_ZERO_NR}), the requirement \eqref{eq:choice_B} reads
    \[
        \frac 12 > \left(\frac{N-1}{(2p-1)(2p-N)}\right)^{\frac{p-1}{2p-1}}  R^{\frac{2p-N}{2p-1}} R^{-\frac{2p}{2p-1}} R^{\frac{N}{2p-1}}A_{0,N,p,R}^\frac{p}{2p-1} =   \left(\frac{N-1}{(2p-1)(2p-N)}\right)^{\frac{p-1}{2p-1}}  A_{0,N,p,R}^\frac{p}{2p-1}.
    \]
\end{proof}
The second consequence is a doubling property for the measure of balls centered at different points. For this proof, we restrict to the case $K=0$, being enough for our purposes (cf. \cite[Lemma 5.1]{Aubry2007}).
\begin{lemma}\label{prop:volume_growth}
     For every $N >1$ and $p>N/2$, there exist $\beta_{N,p},B_{N,p}>0$ such that the following holds. Let $(\X,\sfd,\mm)$ be an essentially non-branching ${\sf CD}(k,N)$ space for some $k\colon \X\to\R$ admissible. Assume that $\mm(B_\varrho(z))=o(\varrho)$ at $\mm$-a.e.\ $z\in\X$. If $x\in\X$ with $\mm(B_\varrho(x))=o(\varrho)$ satisfies
     \[
         \eps \coloneqq R^2 \left(\frac{\rho_p^k(B_{R}(x),0)}{\mm(B_{R}(x))}\right)^\frac{1}{p} \le B_{N,p},
     \]
    then, for all $y \in B_{R}(x),r>0$ with $\sfd(x,y) + r \le R$, it holds
    \begin{equation}
        \label{eq:ball_chaining_2}    \left(\frac{\mm(B_{r}(y))}{\mm(B_R(x))} \right)^\frac{1}{2p-1}\geq \left(\frac{r}{R}\right)^{\frac{N}{2p-1}}\left(\left(\frac{r}{R}\right)^{\frac{2N}{2p-1}}\left( 1- \beta_{N,p}\eps^{\frac{p}{2p-1}}\right)-\beta_{N,p}  \eps^\frac{p}{2p-1}\right).
    \end{equation}
\end{lemma}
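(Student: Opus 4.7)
The plan is to first reduce to $R=1$ by the scaling invariance \eqref{eq:scaling}, and then to combine three applications of Bishop-Gromov (Theorem \ref{thm:BG}) via a chaining argument that passes from $x$ through an intermediate point to $y$. After scaling, denote $d := \sfd(x,y) \le 1-r$, $A := (\mm(B_1(x))/v_{0,N}(1))^{1/(2p-1)}$ and $D := \rho_p^k(B_1(x),0)^{1/(2p-1)}$. A direct computation with the explicit form \eqref{eq:Cp_ZERO_NR} of $C_{0,N,p}(1)$ and the definition of $\eps$ gives $C_{0,N,p}(1)D \le \beta'\eps^{p/(2p-1)}A$ for an explicit constant $\beta'=\beta'(N,p)$; this is the identity that converts curvature deficit quantities into the dimensionless scaling-invariant parameter $\eps^{p/(2p-1)}$ appearing in \eqref{eq:ball_chaining_2}.

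The first ingredient is an application of Theorem \ref{thm:BG} at $x$ with $K=0$ and the star-shaped set $T=B_1(x)$, applied between radii $\rho\le 1$ and $1$. After rearranging, this yields, for every $\rho \in (0,1]$,
\[
\Bigl(\tfrac{\mm(B_\rho(x))}{v_{0,N}(\rho)}\Bigr)^{1/(2p-1)} \,\ge\, A\bigl(1-\beta'\eps^{p/(2p-1)}\bigr),
\]
hence $\mm(B_\rho(x))/\mm(B_1(x))\ge v_{0,N}(\rho)\,(1-\beta'\eps^{p/(2p-1)})^{2p-1}$. The Euclidean explicit formula $v_{0,N}(\rho)/v_{0,N}(1)=\rho^{N}$ shows that one application of Bishop-Gromov contributes a factor $\rho^{N/(2p-1)}$ in the $(2p-1)$-th root of the volume ratio.

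To transfer this lower bound from balls centered at $x$ to the ball $B_r(y)$, I would pick a point $z$ on a minimizing geodesic from $x$ to $y$ with $\sfd(x,z)=\sfd(z,y)=d/2$; since $\mm(B_\varrho(\cdot))=o(\varrho)$ holds at $\mm$-a.e.\ point, $z$ may be assumed to satisfy this property (after a small perturbation using the qualitative non-degeneracy of $\mm$ from Proposition \ref{prop:qualit_non_deg}) so that Theorem \ref{thm:BG} is available at $z$. Applying Theorem \ref{thm:BG} at $z$ with the star-shaped set $T_z=B_{\rho+d/2}(z)$, noting via the triangle inequality that $T_z\subset B_{\rho+d}(x)\subset B_1(x)$ provided $\rho\le 1-d$, and using the inclusion $B_\rho(x)\subset B_{\rho+d/2}(z)$ transports the estimate to a lower bound on $\mm(B_t(z))$ for $t\le\rho+d/2$. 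A third application of Theorem \ref{thm:BG} at $y$ with the star-shaped set $T_y=B_{1-d}(y)\subset B_1(x)$ then carries the estimate from $B_t(z)\subset B_{t+d/2}(y)$ to $B_r(y)$ itself. Each of the three applications contributes one factor of $(r/R)^{N/(2p-1)}$ (via the corresponding ratio $v_{0,N}(\text{small})/v_{0,N}(\text{big})$) and at most a summand of size $\beta'\eps^{p/(2p-1)}$; choosing the intermediate radii $\rho,t\sim r$ and telescoping yields precisely \eqref{eq:ball_chaining_2} with $\beta_{N,p}$ a small multiple of $\beta'$.

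The chief technical difficulty is ensuring, at each link of the chain, that the ball used as the star-shaped set in Theorem \ref{thm:BG} is contained in $B_R(x)$, so that $\rho^k_p$ of this set is dominated by $\rho^k_p(B_R(x),0)$; this is where the smallness hypothesis $\eps\le B_{N,p}$ enters, since it guarantees that the three bracketed quantities $A-\beta'\eps^{p/(2p-1)}A$ remain positive at each stage and that the algebraic manipulation $(a-b)^{2p-1}\ge a^{2p-1}(1-b/a)^{2p-1}$ is legitimate. The delicate case is when $d$ is close to the maximum $R-r$, where the inclusions $B_{\rho+d}(x),B_{t+d/2}(y)\subset B_R(x)$ become tight; here the freedom in choosing $\rho$ and $t$ (and, if necessary, several intermediate midpoints along the geodesic) must be exploited explicitly to produce the claimed constants $\beta_{N,p}$ and $B_{N,p}$ depending only on $N$ and $p$.
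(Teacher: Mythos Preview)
Your overall strategy---propagate a Bishop--Gromov lower bound from $x$ to $y$ by chaining balls along a geodesic---matches the paper's. The gap is in the implementation: a single midpoint $z$ and three applications of Theorem~\ref{thm:BG} cannot produce \eqref{eq:ball_chaining_2} in the critical regime $\sfd(x,y)\approx R-r$ with $r\ll R$. Concretely, after scaling to $R=1$, your Step~3 requires the inclusion $B_t(z)\subset T_y=B_{1-d}(y)$, i.e.\ $t+d/2\le 1-d$, which forces $t\le 1-3d/2$; for $d$ close to $1$ this has no positive solution, so the chain breaks before reaching $B_r(y)$. The parenthetical remark ``if necessary, several intermediate midpoints'' identifies the issue but does not resolve it: how many points are needed, and why the accumulated errors stay bounded, \emph{is} the proof.

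The paper instead builds a chain of $M\sim \log_{2-\eta}\!\bigl(1+\sfd(x,y)/r\bigr)$ points $y=y_1,y_2,\dots,y_M=x$ along the geodesic, with radii $R_i=(2-\eta)^{i-1}r$ growing geometrically so that $B_{r_i}(y_i)\subset B_{R_{i-1}}(y_{i-1})$ and $B_{R_i}(y_i)\subset B_R(x)$ at every step. Iterating \eqref{eq:bishop_gromov_quantitative} (rewritten as \eqref{eq:bg_massaggiato}) along this chain produces a finite geometric series of error terms whose ratio is $\zeta/\xi=\eta^{N/(2p-1)}(2-\eta)^{(2p-N)/(2p-1)}$; handling this requires a case split at $p=N$ and a choice of $\eta$ close to $1$. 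The three powers of $(r/R)^{N/(2p-1)}$ in \eqref{eq:ball_chaining_2} arise not from three fixed applications of Bishop--Gromov but from the combined estimates $\xi^{M-2}\lesssim(R/r)^{2N/(2p-1)}$ (coming from $\eta^{M-1}\gtrsim (r/R)^2$) and a final use of Proposition~\ref{prop:doubling uniform at x} at the endpoint $x$. Your three-link heuristic gives the right exponent count, but the mechanism is genuinely different and the error control is where the work lies.
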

\begin{proof}
    Let $z\in B_R(x)$ be such that $\mm(B_\varrho(z))=o(\varrho)$. Let $0<r_1\leq r_2< R-\sfd(x,z)$, and denote by $\beta'_{N,p}\coloneqq  \left(\frac{N-1}{(2p-1)(2p-N)}\right)^{\frac{p-1}{2p-1}} $. Then, by inequality \eqref{eq:bishop_gromov_quantitative} for $K=0$, we have 
    \begin{align*}
        \left( \frac{\mm(B_{r_2}(z))}{r_2^N} \right)^\frac{1}{2p-1} - \left( \frac{\mm(B_{r_1}(z))}{r_1^N} \right)^\frac{1}{2p-1} &\le \beta'_{N,p}\,{r_2}^{\frac{2p-N}{2p-1}} \rho^k_p(B_{r_2}(z),0)^\frac{1}{2p-1}\\
        &\le \beta'_{N,p} \,{r_2}^{\frac{2p-N}{2p-1}} \rho^k_p(B_{R}(x),0)^\frac{1}{2p-1}\\
        &= \beta'_{N,p}\,{r_2}^{\frac{2p-N}{2p-1}}{R}^{\frac{2p}{2p-1}}\mm(B_R(x))^{\frac1{2p-1}}\eps^\frac{p}{2p-1},
    \end{align*}
    using the inclusion $B_{r_2}(z)\subset B_R(x)$. Rearranging the terms in the above inequality, we obtain 
    \begin{equation}
    \label{eq:bg_massaggiato}
        \left( \frac{\mm(B_{r_2}(z))}{\mm(B_R(x))} \right)^\frac{1}{2p-1}\leq \left( \frac{\mm(B_{r_1}(z))}{\mm(B_R(x))} \right)^\frac{1}{2p-1}\left(\frac{r_2}{r_1}\right)^{\frac{N}{2p-1}} + \beta'_{N,p}\left(\frac{r_2}{R}\right)^{\frac{2p}{2p-1}}  \eps^\frac{p}{2p-1}.
    \end{equation}
    We claim that inequality \eqref{eq:bg_massaggiato} holds for every $z\in B_R(x)$. Indeed, let $z\in B_R(x)$, then, there exists a sequence $\{z_n\}_{n\in \N}$ converging to $z$ and such that $\mm(B_\varrho(z_n)) =o(\varrho)$ for all $n$. In particular, we can apply \eqref{eq:bg_massaggiato} to the point $z_n$, so that for big enough $n \in \N$ it holds 
    \begin{equation}
    \label{eq:bg_massaggiato_ennesimo}
        \left( \frac{\mm(B_{r_2}(z_n))}{\mm(B_R(x))} \right)^\frac{1}{2p-1}\leq \left( \frac{\mm(B_{r_1}(z_n))}{\mm(B_R(x))} \right)^\frac{1}{2p-1}\left(\frac{r_2}{r_1}\right)^{\frac{N}{2p-1}} + \beta'_{N,p}\left(\frac{r_2}{R}\right)^{\frac{2p}{2p-1}}  \eps^\frac{p}{2p-1}.
    \end{equation}
    Moreover, for a fixed $\rho >0$, we have that
    \begin{equation*}
        \lims_{y \to z} \left| \mm(B_\rho(y)) - \mm(B_\rho(z))  \right| \le \lims_{y \to x} \left|  \int \chi_{B_\rho(y)}-\chi_{B_\rho(z)}\,\d \mm  \right|= \mm(\{ w \in \X:\, \sfd(w,z) =\rho \})=0,
    \end{equation*}
    where the last equality follows from \cite[Lemma 3.7]{KorteLahti2014} (they assume that $\mm$ is doubling, nonetheless \cite[Thm.\ 5.9]{Ket17} is enough for our purposes). Thus,
    the function $z \mapsto \mm(B_\rho(z))$ is continuous.
    By taking the limit in \eqref{eq:bg_massaggiato_ennesimo} we obtain \eqref{eq:bg_massaggiato}, thus proving the claim.
    
    We now prove \eqref{eq:ball_chaining_2} for every $y\in B_R(x)$ and $r>0$ with $\sfd(x,y)+r\leq R$, via a ball-chaining argument. Let $\eta\in (0,1)$ to be chosen later. Set $L\coloneqq \sfd(x,y)$ and let $\gamma \colon [0,L]\to\X$ be a geodesic parametrized by arc-length such that $\gamma_0=y$, $\gamma_L=x$, and set $y_1\coloneqq y$, $R_1\coloneqq r$. Then, we define 
    \begin{equation*}
        y_i\coloneqq \gamma \left(\gamma^{-1}(y_{i-1})+(1-\eta) R_{i-1}\right),\qquad r_i\coloneqq \eta R_{i-1},\qquad R_i\coloneqq \left(2-\eta\right)^{i-1} r.
    \end{equation*}
     for all $2\le i\le M-1$, where $M\coloneqq \Big\lfloor \frac{\log\left( 1+ \frac{\sfd(x,y)}{r}\right)}{\log(2-\eta)} +1 \Big\rfloor$. The iterative choices of $y_i,r_i,R_i$ for $i=2,\dots,M-1$ are done in such a way that the following three properties hold:
    \begin{itemize}
        \item[i)] the point $y_{i}$ is chosen along the geodesic $\gamma$, such that $\sfd(y_{i},y_{i-1})=(1-\eta) R_{i-1}$; 
        \item[ii)] $R_i$ is chosen to be the largest number such that $B_{R_i}(y_i)\subset B_{R}(x)$;
        \item[iii)] $r_{i}$ is chosen to be the largest number such that $B_{r_i}(y_i) \subset B_{R_{i-1}}(y_{i-1})$.
   \end{itemize}
    Lastly, we define $y_M\coloneqq x$, $r_M\coloneqq \eta R_{M-1}$, $R_M\coloneqq \left(2-\eta\right)^{M-1} r$. For every $i=2,\ldots,M$, by \eqref{eq:bg_massaggiato} applied with $z\coloneqq y_i$, $(r_1,r_2)\coloneqq (r_i,R_i)$, we obtain  
    \begin{equation}
    \label{eq:bg_massaggiato_step_i}
    \begin{split}
        \left( \frac{\mm(B_{R_i}(y_i))}{\mm(B_R(x))} \right)^\frac{1}{2p-1}&\leq \left( \frac{\mm(B_{r_i}(y_i))}{\mm(B_R(x))} \right)^\frac{1}{2p-1}\left(\frac{2-\eta}{\eta}\right)^{\frac{N}{2p-1}} + \beta'_{N,p}\left(\frac{R_i}{R}\right)^{\frac{2p}{2p-1}}  \eps^\frac{p}{2p-1},\\
        &\leq \left( \frac{\mm(B_{R_{i-1}}(y_{i-1}))}{\mm(B_R(x))} \right)^\frac{1}{2p-1}\left(\frac{2-\eta}{\eta}\right)^{\frac{N}{2p-1}} + \beta'_{N,p}\left(2-\eta\right)^{\frac{2p(i-1)}{2p-1}}\left(\frac{r}{R}\right)^{\frac{2p}{2p-1}}  \eps^\frac{p}{2p-1},
    \end{split}
    \end{equation}
    having used that $R_i=\frac{2-\eta}{\eta} r_i$ for every $2 \le i \le M$, and the fact that $B_{r_i}(y_i)\subset B_{R_{i-1}}(y_{i-1})$, by our choice of radii. For the sake of notation, denote by 
    \begin{equation*}
        \xi\coloneqq \left(\frac{2-\eta}{\eta}\right)^{\frac{N}{2p-1}}\qquad\text{and}\qquad \zeta\coloneqq (2-\eta)^{\frac{2p}{2p-1}}.
    \end{equation*}
    Iterating \eqref{eq:bg_massaggiato_step_i} starting from $i=M-1$, we deduce that
    \begin{equation}
    \label{eq:bg_iterato}
        \left( \frac{\mm(B_{R_{M-1}}(y_{M-1}))}{\mm(B_R(x))} \right)^\frac{1}{2p-1}\leq\xi^{M-2}\left(\left( \frac{\mm(B_{r}(y))}{\mm(B_R(x))} \right)^\frac{1}{2p-1}+\beta'_{N,p}\left(\frac{r}{R}\right)^{\frac{2p}{2p-1}}  \eps^\frac{p}{2p-1} \sum_{j=0}^{M-3} \left(\frac{\zeta}{\xi}\right)^{M-j-2}\right).
    \end{equation}
    To estimate the sum appearing in \eqref{eq:bg_iterato}, we distinguish two cases, according to the magnitude of $p$. 
    
    \noindent{\sc Case $p\in\big(\frac{N}2,N\big)$.} In this range, we may choose $\eta$ sufficiently close to $1$ to ensure that
    \begin{equation}
    \label{eq:assumption_parameter_eta}
        \frac{\zeta}{\xi}=
        \eta^{\frac{N}{2p-1}}(2-\eta)^{\frac{2p-N}{2p-1}}< 1.
    \end{equation}
    Therefore, the finite series appearing in \eqref{eq:bg_iterato} can be estimated as follows 
    \begin{equation*}
        \sum_{j=0}^{M-3} \left(\frac\zeta\xi\right)^{M-j-2}=\frac{1-\left(\frac\zeta\xi\right)^{M-1}}{1-\frac\zeta\xi}-1\stackrel{\eqref{eq:assumption_parameter_eta}}{\leq} \frac{1}{1-\frac\zeta\xi}.
    \end{equation*}
    In conclusion, from \eqref{eq:bg_iterato}, we get the inequality 
    \begin{multline}
    \label{eq:bg_massaggiato_iterato1}
      \left( \frac{\mm(B_{R_{M-1}}(y_{M-1}))}{\mm(B_R(x))} \right)^\frac{1}{2p-1} \leq\left(\frac{2-\eta}{\eta}\right)^{\frac{N(M-1)}{2p-1}}\Bigg(\left(\frac{\mm(B_{r}(y))}{\mm(B_R(x))} \right)^\frac{1}{2p-1}\\+\beta'_{N,p}\left(\frac{r}{R}\right)^{\frac{2p}{2p-1}}  \eps^\frac{p}{2p-1} \frac{1}{1-\eta^{\frac{N}{2p-1}}\left(2-\eta\right)^{\frac{2p-N}{2p-1}}}\Bigg)       
    \end{multline}

    \noindent{\sc Case $p\geq N$}. In this range, for the $\eta$ fixed above, we have $\zeta\geq\xi$. Thus, we estimate the sum as 
    \begin{align*}
        \sum_{j=0}^{M-3} \left(\frac\zeta\xi\right)^{M-j-2}&=\frac{\left(\frac\zeta\xi\right)^{M-1}-1}{\frac\zeta\xi-1}-1\leq \frac{\left(1+\frac{\sfd(x,y)}{r}\right)^{\frac{N\log_{2-\eta}\eta+2p-N}{2p-1}}-1}{\frac\zeta\xi-1}\leq \frac{\left(\frac{R}{r}\right)^{\frac{N\log_{2-\eta}\eta+2p-N}{2p-1}}}{\frac\zeta\xi-1}\\
        & \leq \frac{\left(\frac{R}{r}\right)^{\frac{2p-N}{2p-1}}}{\frac\zeta\xi-1},
    \end{align*}
    where, in the last step, we used that, since $\eta\in(0,1)$, $\log_{2-\eta}\eta<0$. Therefore, from \eqref{eq:bg_iterato}, we get
    \begin{multline}
    \label{eq:bg_massaggiato_iterato2}
      \left( \frac{\mm(B_{R_{M-1}}(y_{M-1}))}{\mm(B_R(x))} \right)^\frac{1}{2p-1} \leq\left(\frac{2-\eta}{\eta}\right)^{\frac{N(M-2)}{2p-1}}\Bigg(\left(\frac{\mm(B_{r}(y))}{\mm(B_R(x))} \right)^\frac{1}{2p-1}\\+\beta'_{N,p}\left(\frac{r}{R}\right)^{\frac{N}{2p-1}}  \eps^\frac{p}{2p-1} \frac{1}{\eta^{\frac{N}{2p-1}}\left(2-\eta\right)^{\frac{2p-N}{2p-1}}-1}\Bigg)       
    \end{multline}
    
    \noindent {\sc Conclusion}. By \eqref{eq:bg_massaggiato_iterato1} and \eqref{eq:bg_massaggiato_iterato2}, for every $p>\frac{N}{2}$, we deduce 
    \begin{multline}
    \label{eq:bg_massaggiato_iterato}
      \left( \frac{\mm(B_{R_{M-1}}(y_{M-1}))}{\mm(B_R(x))} \right)^\frac{1}{2p-1} \leq\left(\frac{2-\eta}{\eta}\right)^{\frac{N(M-2)}{2p-1}}\Bigg(\left(\frac{\mm(B_{r}(y))}{\mm(B_R(x))} \right)^\frac{1}{2p-1}+\beta'_{N,p}\beta''_{N,p}\left(\frac{r}{R}\right)^{\frac{N}{2p-1}}  \eps^\frac{p}{2p-1} \Bigg),
    \end{multline}
    where $\beta''_{N,p}\coloneqq \left|1-\eta^{\frac{N}{2p-1}}\left(2-\eta\right)^{\frac{2p-N}{2p-1}}\right|^{-1}$.    
    On the other hand, by Proposition \ref{prop:doubling uniform at x}, choosing $B_{N,p}\leq A_{0,N,p}$ (recall that $B_{r_M}(x)\subset B_{R_{M-1}}(y_{M-1})$, $y_M=x$ and $r_M=\eta\left(2-\eta\right)^{M-2}r$), we estimate
    \begin{equation}
    \label{eq:doubling_massaggiato}
         \left( \frac{\mm(B_{R_{M-1}}(y_{M-1}))}{\mm(B_R(x))} \right)^\frac{1}{2p-1} \geq \eta^{\frac{N}{2p-1}}\left(2-\eta\right)^{\frac{N(M-2)}{2p-1}}\left(\frac{r}{R}\right)^{\frac{N}{2p-1}}\left( 1- 2\beta'_{N,p}\eps^{\frac{p}{2p-1}}\right).
    \end{equation}
    Combining \eqref{eq:bg_massaggiato_iterato} and \eqref{eq:doubling_massaggiato}, we finally obtain the following bound 
    \begin{align}
    \label{eq:final_massaggiata}
        \left(\frac{r}{R}\right)^{\frac{N}{2p-1}}\left( 1- 2\beta'_{N,p}\eps^{\frac{p}{2p-1}}\right)\leq\left(\frac{1}{\eta}\right)^{\frac{N(M-1)}{2p-1}}\Bigg(\left(\frac{\mm(B_{r}(y))}{\mm(B_R(x))} \right)^\frac{1}{2p-1}+\beta'_{N,p}\beta''_{N,p}\left(\frac{r}{R}\right)^{\frac{N}{2p-1}}  \eps^\frac{p}{2p-1} \Bigg) .
    \end{align}
    We now take $\eta$ sufficiently close to $1$ so that it satisfies $\log_{2-\eta}\eta\geq - 2$ as well. Hence, we deduce
    \begin{equation*}
        \eta^{M-1}=(2-\eta)^{(M-1)\log_{2-\eta}\eta}\geq (2-\eta)^{-2(M-1)}\geq \left(1+\frac{\sfd(x,y)}{r}\right)^{-2}\geq\left(\frac{r}{R}\right)^2.
    \end{equation*}
    Therefore, we can continue from \eqref{eq:final_massaggiata} to obtain
    \begin{equation*}
        \left(\frac{\mm(B_{r}(y))}{\mm(B_R(x))} \right)^\frac{1}{2p-1}\geq \left(\frac{r}{R}\right)^{\frac{3N}{2p-1}}\left( 1- 2\beta'_{N,p}\eps^{\frac{p}{2p-1}}\right)-\beta'_{N,p}\beta''_{N,p}\left(\frac{r}{R}\right)^{\frac{2N}{2p-1}}  \eps^\frac{p}{2p-1}.
    \end{equation*}
    Choosing $\beta_{N,p}= \left(\beta'_{N,p}\beta''_{N,p}\right) \vee 2\beta'_{N,p}$, inequality \eqref{eq:ball_chaining_2} follows.
\end{proof}
\begin{remark}\rm
    Note that the inequality \eqref{eq:ball_chaining_2} holds whenever the scaling invariant curvature deficit is smaller than $B_{N,p}$. However, the ratio $\frac{r}R$ can be extremely small (if, for example, $y$ is close to the boundary of $B_R(x)$) and the right-hand side of \eqref{eq:ball_chaining_2} could be negative, making the statement trivial. We will use this in Proposition \ref{prop:Lemma_1.4_aubry}, showing that the right-hand side is indeed strictly positive.\fr
\end{remark}
\section{Myers' diameter estimate}\label{sec:diameter}
In this section, we show Theorem \ref{thm:diameter_estimate} as a byproduct of the more general statement Theorem \ref{thm:diameter_estimate_body}. 
\subsection{Star-shaped partition}
The aim of this section is to show the existence in \cref{prop:partition_with_voronoi} of a countable partition, up to a negligible set, of a geodesic metric measure space into star-shaped sets. 

We start by setting up some notations and proving two preliminary lemmas. Given a metric space $(\X,\sfd)$ and a countable collection $\{x_i\}_{i\in I}$, for every $i,j \in I$ and $\delta \in \R$, we define
\begin{equation}
\label{eq:Tij_notation}
    U_{i,j}^\delta\coloneqq \{ x \in \X:\,\sfd(x,x_i)-\sfd(x,x_j)<\delta \},\qquad\text{and}\qquad T_i^\delta=\bigcap_{j >i} U_{i,j}^\delta \cap \bigcap_{j<i} U_{i,j}^{-\delta}.
\end{equation}
\begin{lemma}
\label{lem:alimentiamo l'ego di Emanuele}
    Let $(\X,\sfd)$ be a metric space and let $\{x_i\}_{i\in I}$ be a countable collection of distinct points in $\X$. For every $\delta \in\R$, it holds
    \begin{equation}
        \X\setminus \bigcup_{i\in I} T_i^\delta \subset \bigcup_{i,j\in I} \left\{x\in\X : \sfd(x,x_i)-\sfd(x,x_j)=\delta \right\}.
    \end{equation}
\end{lemma}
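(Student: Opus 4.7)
My strategy is contraposition. Fix $x\in\X$ outside every hyperplane $\{\sfd(x,x_i)-\sfd(x,x_j)=\delta\}$; equivalently, setting $a_i:=\sfd(x,x_i)$, assume $a_i-a_j\neq\delta$ for every $i\neq j$ in $I$. The goal is to exhibit $i^\star\in I$ with $x\in T_{i^\star}^\delta$, i.e.\ to verify
\[
a_{i^\star}<a_j+\delta\quad\forall\, j>i^\star\text{ in }I,\qquad a_{i^\star}<a_j-\delta\quad\forall\, j<i^\star\text{ in }I.
\]
The noncoincidence hypothesis will be used systematically to upgrade the non-strict inequalities produced along the way to strict ones.

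I would first reduce to the case $\delta\ge 0$ by reversing the linear order on $I$ (for $\delta=0$ the diagonal contribution $i=j$ makes the right-hand side of the lemma equal to $\X$, so the statement is trivial in that case). The natural candidate for $i^\star$ is the smallest index that is nearly optimal with tolerance $\delta$: introduce
\[
M:=\{i\in I:a_i<a_j+\delta\text{ for every }j\in I\text{ with }j>i\},
\]
so that membership in $M$ is exactly the first required condition. I would argue that $M$ is nonempty because otherwise one could iteratively, starting from any index, pick a strictly later witness with $a$-value dropping by more than $\delta$, producing an infinite increasing chain along which the values $a_{i_n}$ drop by more than $n\delta$, contradicting $a_\cdot\ge 0$ for $\delta>0$. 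Then I set $i^\star:=\min M$; the first condition holds by construction.

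The core of the argument is verifying the second condition, $a_{i^\star}<a_j-\delta$ for every $j<i^\star$. Fix such $j$; by minimality $j\notin M$, so there exists $k>j$ with $a_k\le a_j-\delta$, which the hypothesis improves to the strict $a_k<a_j-\delta$. If $k<i^\star$ the same reasoning applies to $k$, yielding a further witness with an even smaller $a$-value; iterating, one generates a strictly increasing sequence of indices lying strictly between $j$ and $i^\star$ along which $a$-values decrease by more than $\delta$ at each step. Since $a_\cdot\ge 0$, this process terminates at some witness $k'\ge i^\star$ after finitely many steps, say $m\ge 1$. If $k'=i^\star$ the terminal inequality $a_{i^\star}<a_j-m\delta\le a_j-\delta$ is exactly what is needed; if $k'>i^\star$, the defining property $i^\star\in M$ applied to $k'$ gives $a_{i^\star}<a_{k'}+\delta<a_j-(m-1)\delta$, which yields the desired inequality as soon as $m\ge 2$.

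The main obstacle I anticipate is the borderline case in which the iteration terminates after a single step with $k'>i^\star$: there the bound $a_{i^\star}<a_{k'}+\delta$ only gives $a_{i^\star}<a_j$, rather than $a_{i^\star}<a_j-\delta$. This is precisely where the noncoincidence hypothesis $a_i-a_j\neq\delta$ needs to be invoked in its sharpest form, either to rule out the boundary configuration $a_{i^\star}=a_j-\delta$ or to force a refinement of the choice of witness. Handling this single-step case cleanly, together with the iterative case, completes the verification that $i^\star\in T_{i^\star}^\delta$ and hence the contrapositive of the lemma.
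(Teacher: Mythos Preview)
Your sketch correctly isolates the crux but leaves an unresolvable gap. In the single--step case $m=1$ with $k'>i^\star$ you only obtain $a_{i^\star}<a_{k'}+\delta<a_j$, whereas you need $a_{i^\star}<a_j-\delta$; the noncoincidence hypothesis $a_{i^\star}-a_j\neq\pm\delta$ cannot close this, since it would only help once you already knew $a_{i^\star}\le a_j-\delta$. This is not a defect of your strategy but of the statement itself: take $I=\{1,2,3\}$, $\delta=1$, and any metric configuration with $(a_1,a_2,a_3)=(2,\,1.5,\,0.6)$ (e.g.\ $x=0$ and $x_i=a_ie_i$ in $\R^3$). The differences $a_i-a_j$ are $\pm0.5,\pm1.4,\pm0.9$, so none equals $1$, yet a direct check gives $x\notin T_1^\delta\cup T_2^\delta\cup T_3^\delta$ (each $T_i^\delta$ fails at exactly one constraint). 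In your notation $M=\{2,3\}$, $i^\star=2$, and the problematic pair is $j=1$, $k'=3$ with $m=1$, which is precisely the borderline configuration you flagged.

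For comparison, the paper argues by induction on $\#I$; the inductive step rests on the identities
\[
\X\setminus\bigcup_{i\le n}T_i^\delta=\Big(\X\setminus\bigcup_{i\le n-1}T_i^\delta\Big)\cup(\X\setminus T_n^\delta)
\quad\text{and}\quad
\X\setminus\bigcup_{i\le n-1}\big(T_i^{\delta,n}\cap U_{i,n}^\delta\big)=\Big(\X\setminus\bigcup_{i\le n-1}T_i^{\delta,n}\Big)\cup\bigcap_{i\le n-1}(U_{i,n}^\delta)^c,
\]
neither of which is a valid set identity (the first should be an intersection, and the second fails for mixed unions of intersections). The same three--point example refutes the final containment claimed there: $x$ lies in $\X\setminus\bigcup_i T_i^\delta$ but not in $\bigcup_{i,j}\{a_i-a_j=\delta\}$, and also not in $\bigcap_{i\le 2}(U_{i,3}^\delta)^c$ since $a_2-a_3=0.9<1$. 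So neither route can succeed without an extra hypothesis; the application in the subsequent proposition uses a large separation of the $x_i$ relative to $\delta$, under which a corrected argument is available, but the lemma as stated for arbitrary distinct points and arbitrary $\delta$ is false.
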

\begin{proof}
    We proceed by induction on the cardinality of $I$. If $\# I =2$, the claim follows directly from the definition of $T_i^\delta$. For $n\in\N$, let us assume that the statement holds true for $\# I=n-1$ and we prove it for a collection of $n$ points. We apply the induction hypothesis on $\{x_i\}_{i=1}^{n-1}$. Denoting by 
    \begin{equation*}
        T_{i}^{\delta,n}\coloneqq  \bigcap_{\substack{j >i\\ j\neq n}} U_{i,j}^\delta \cap \bigcap_{\substack{j<i \\ j\neq n}} U_{i,j}^{-\delta}= \bigcap_{\substack{j >i\\ j\neq n}} U_{i,j}^\delta \cap \bigcap_{j<i} U_{i,j}^{-\delta},\qquad \forall\, i=1,\ldots,n-1,
    \end{equation*}
    we have that
    \begin{equation*}
        \X\setminus \bigcup_{i=1}^{n-1} T_{i}^{\delta,n}\subset\bigcup_{i,j=1}^{n-1} \left\{x\in\X : \sfd(x,x_i)-\sfd(x,x_j)=\delta \right\}.
    \end{equation*}
    Moreover, observe that the following relation holds: 
    \begin{equation*}
        T_i^\delta=T_{i}^{\delta,n}\cap U_{i,n}^\delta, \qquad \forall\, i=1,\ldots,n-1.
    \end{equation*}
    Thus, by standard set operations, we deduce that
    \begin{align*}
        \X\setminus \bigcup_{i=1}^n T_i^\delta 
        &= \left(\X\setminus \bigcup_{i=1}^{n-1} T_i^\delta\right) \cup (\X\setminus T_n^\delta) 
        = \left(\X\setminus \bigcup_{i=1}^{n-1} (T_{i}^{\delta,n}\cap U_{i,n}^\delta)\right) \cup (\X\setminus T_n^\delta)\\
        &= \left(\X\setminus \bigcup_{i=1}^{n-1} T_{i}^{\delta,n}\right) \cup \left(\bigcap_{i=1}^{n-1}(U_{i,n}^\delta)^c\cap (T_n^\delta)^c\right) \\ 
        &\subset \bigcup_{i,j=1}^{n-1} \left\{x\in\X : \sfd(x,x_i)-\sfd(x,x_j)=\delta \right\} \cup \left(\bigcap_{i=1}^{n-1}(U_{i,n}^\delta)^c\cap (T_n^\delta)^c\right).
    \end{align*}
    Now, the conclusion of the proof follows if we prove the inclusion
    \begin{equation}\label{eq:final claim voronoi}
        \bigcap_{i=1}^{n-1}(U_{i,n}^\delta)^c\cap (T_n^\delta)^c\subset \bigcup_{i=1}^{n} \left\{x\in\X : |\sfd(x,x_i)-\sfd(x,x_n)|=\delta \right\}.
    \end{equation}
    Let $p\in \bigcap_{i=1}^{n-1}(U_{i,n}^\delta)^c\cap (T_n^\delta)^c$. On the one hand, since $p\in (U_{i,n}^\delta)^c$,  we must have
    \begin{equation*}
        \sfd(p,x_i)-\sfd(p,x_n)\geq \delta,\qquad\forall\,i=1,\ldots,n-1.
    \end{equation*}
    On the other hand, note that $T_n^\delta=\bigcap_{j=1}^{n-1} U_{n,j}^{-\delta}$, hence $p\in (T_n^\delta)^c$ implies that there exists $j_0\in \{1,\ldots,n-1\}$ such that 
     \begin{equation*}
        \sfd(p,x_n)-\sfd(p,x_{j_0})\geq -\delta.
    \end{equation*}
    This means that $p\in\{x \in \X \colon  |\sfd(x,x_{j_0})-\sfd(x,x_n)|=\delta \}$, and thus \eqref{eq:final claim voronoi} is verified.
\end{proof}
\begin{lemma}
\label{lemma:perturbation_is_starshaped}
Let $(\X,\sfd,\mm)$ be a geodesic metric measure space and let $\{x_i\}_{i\in I}$ be a countable collection of distinct points in $\X$. For any $i,j \in I$ and $\delta > -\sfd(x_i,x_j)$, we have that $U_{i,j}^\delta$ defined in \eqref{eq:Tij_notation} is star-shaped at $x_i$. Moreover, there exists a countable set $S_{i,j} \subset (-\sfd(x_i,x_j),\infty)$, such that 
\begin{equation}
\label{eq:negligible_measure}
\mm\left(\{x\in\X : \sfd(x,x_i)-\sfd(x,x_j)=\delta \}\right)=0,\qquad\forall\delta \in (-\sfd(x_i,x_j),\infty)\setminus S_{i,j}.
\end{equation} 
\end{lemma}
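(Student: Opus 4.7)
The plan is to address the two claims separately, each by a short direct argument.

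\textbf{Star-shapedness.} The set $U_{i,j}^\delta$ is open, hence Borel, and the standing assumption $\delta > -\sfd(x_i,x_j)$ ensures $x_i \in U_{i,j}^\delta$, since $\sfd(x_i,x_i)-\sfd(x_i,x_j)=-\sfd(x_i,x_j)<\delta$. Given any $y\in U_{i,j}^\delta$, I would pick a constant-speed geodesic $\gamma\colon [0,1]\to \X$ with $\gamma_0=x_i$ and $\gamma_1=y$, which exists because $(\X,\sfd)$ is geodesic. The identity $\sfd(\gamma_t,x_i)=t\,\sfd(x_i,y)$ combined with the triangle inequality $\sfd(\gamma_t,x_j)\ge \sfd(y,x_j)-(1-t)\sfd(x_i,y)$ would give
\[
\sfd(\gamma_t,x_i)-\sfd(\gamma_t,x_j) \;\le\; \sfd(x_i,y)-\sfd(y,x_j) \;<\;\delta,
\]
so that $\gamma_t \in U_{i,j}^\delta$ for every $t\in[0,1]$, proving star-shapedness at $x_i$.

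\textbf{Negligibility of level sets.} The function $f(x)\coloneqq \sfd(x,x_i)-\sfd(x,x_j)$ is $1$-Lipschitz, hence continuous, so the level sets $A_\delta\coloneqq \{f=\delta\}$ are Borel and pairwise disjoint for distinct values of $\delta$. Since $\mm$ is boundedly finite, $\X=\bigcup_{n\in\N} B_n(x_i)$ is a countable exhaustion by sets of finite $\mm$-measure. The plan is to use the standard fact that a finite measure can charge at most countably many members of a disjoint Borel family: for each $n\in\N$, the set $\Sigma_n\coloneqq \{\delta\in\R:\mm(A_\delta\cap B_n(x_i))>0\}$ is countable. Then
\[
S_{i,j} \coloneqq \left(\bigcup_{n\in\N}\Sigma_n\right)\cap(-\sfd(x_i,x_j),\infty)
\]
is countable, and for every $\delta\in(-\sfd(x_i,x_j),\infty)\setminus S_{i,j}$ it holds $\mm(A_\delta)=\sum_n \mm(A_\delta\cap B_n(x_i))=0$, which is the required conclusion.

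I do not anticipate a significant obstacle. The star-shaped claim is morally just the reverse triangle inequality applied along the geodesic, together with the linear growth of $t\mapsto \sfd(\gamma_t,x_i)$; the countability statement is a soft $\sigma$-finiteness argument for disjoint level sets of a continuous function.
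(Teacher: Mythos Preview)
Your argument is correct. For star-shapedness, the paper argues by contradiction: assuming some intermediate point $p=\gamma_t$ satisfies $\sfd(p,x_i)=\sfd(p,x_j)+\delta$, it deduces $\sfd(x_j,w)\le\sfd(x_i,w)-\delta$, contradicting $w\in U_{i,j}^\delta$. Your direct inequality chain via $\sfd(\gamma_t,x_i)=t\,\sfd(x_i,y)$ and the reverse triangle inequality is cleaner and avoids the detour through the intermediate value theorem; both rest on the same underlying geodesic/triangle-inequality idea. For the negligibility claim the paper simply says it is a straightforward consequence of continuity of the distance, whereas you spell out the $\sigma$-finite disjoint-family argument, which is exactly what is meant.

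Two cosmetic points that do not affect the logic: the function $f=\sfd(\cdot,x_i)-\sfd(\cdot,x_j)$ is $2$-Lipschitz rather than $1$-Lipschitz (difference of two $1$-Lipschitz functions), though only continuity is used; and since the balls $B_n(x_i)$ are nested, the identity you wrote should read $\mm(A_\delta)=\lim_n\mm(A_\delta\cap B_n(x_i))$ rather than a sum, but the conclusion $\mm(A_\delta)=0$ follows either way once each term vanishes.
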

\begin{proof}
    First, we prove that $U_{i,j}^\delta$ is star-shaped at $x_i$, for fixed $i,j \in I$.
    Let $w \in U_{i,j}^\delta$. We consider an arbitrary geodesic $\gamma\colon [0,1] \to \X$ such that $\gamma_0 = x_i$ and $\gamma_1 = w$. We define the continuous function $g(t)\coloneqq \sfd(\gamma_t,x_i)-\sfd(\gamma_t,x_j)-\delta$. We have that $g(0)=-\sfd(x_i,x_j)-\delta <0$ and $g(1)<0$ by definition of $U_{i,j}^\delta$.
    We need to prove that $g(t) <0$ for every $t \in [0,1]$. Since $g$ is continuous, we can assume by contradiction that there exists $t \in [0,1]$ such that $g(t)=0$. This means that there exists $p \in \X$ on the geodesic $\gamma$ so that $\sfd(p,x_i)=\sfd(p,x_j)+\delta$. We have that
    \begin{equation*}
        \sfd(x_j,w)\le \sfd(p,w)+\sfd(p,x_j) = \sfd(p,w)+\sfd(p,x_i) - \delta = \sfd(x_i,w)-\delta,
    \end{equation*}
    where we used that $p \in {\rm Im} \gamma$. This contradicts that $w \in U_{i,j}^\delta$, which is then star-shaped at $x_i$, as claimed. Finally, \eqref{eq:negligible_measure} is a straightforward consequence of the continuity of the distance. 
\end{proof}
\begin{proposition}[Star-shaped essential partition]
\label{prop:partition_with_voronoi}
    Let $(\X,\sfd,\mm)$ be a geodesic metric measure space and consider a $4\pi$-separated set of points $\{x_i\}_{i\in I}$. Then, there is $S\subset\R$ countable and, for all $\delta \in (0,2\pi)\setminus S$, there is a countable collection of open sets $\{T_i^\delta\}_{i \in I}$ such that:
    \begin{itemize}
        \item[{\rm i)}] $T_i^\delta\cap T_j^\delta=\emptyset$ for every $i,j\in I$ with $i\neq j$, and $\mm(\X\setminus \cup_i T_i^\delta)=0$;
        \item[{\rm ii)}] $T_i^\delta$ is star-shaped at $x_i$ for all $i \in I$;
        \item[{\rm iii)}] $B_{2\pi-\delta}(x_i) \subset T_i^\delta$ for all $i \in I$.
    \end{itemize}
\end{proposition}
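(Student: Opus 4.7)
The natural candidate sets are already displayed in~\eqref{eq:Tij_notation}: I would take $T_i^\delta$ exactly as defined there. My task then reduces to selecting $\delta$ outside a suitable countable set $S$ and checking the three properties. The structure of the argument is essentially the assembly of \cref{lem:alimentiamo l'ego di Emanuele} and~\cref{lemma:perturbation_is_starshaped}.

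I would first settle the measure-theoretic and combinatorial content of (i). For every ordered pair $(i,j)$ with $i\neq j$, \cref{lemma:perturbation_is_starshaped} supplies a countable exceptional set $S_{i,j}\subset(-\sfd(x_i,x_j),\infty)$ outside which $\{x:\sfd(x,x_i)-\sfd(x,x_j)=\delta\}$ is $\mm$-null. Since $I$ is countable, $S\coloneqq\bigcup_{i,j\in I}S_{i,j}$ is countable; the level sets at the value $-\delta$ are automatically covered because they coincide with the level sets of $\sfd(\cdot,x_j)-\sfd(\cdot,x_i)$ at $\delta$. \cref{lem:alimentiamo l'ego di Emanuele} then yields $\mm(\X\setminus\bigcup_i T_i^\delta)=0$ for any $\delta\in(0,2\pi)\setminus S$. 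Disjointness is immediate: for $i<j$, membership of $x$ in both $T_i^\delta$ and $T_j^\delta$ forces $\sfd(x,x_i)-\sfd(x,x_j)<\delta$ and $\sfd(x,x_j)-\sfd(x,x_i)<-\delta$, which is impossible for $\delta>0$. For (ii), the $4\pi$-separation combined with $|\delta|<2\pi$ yields $\pm\delta>-4\pi\geq -\sfd(x_i,x_j)$ for every $j\neq i$, so \cref{lemma:perturbation_is_starshaped} makes each $U_{i,j}^{\pm\delta}$ star-shaped at $x_i$; inspection of its proof shows that in fact \emph{every} geodesic from $x_i$ to a point of the set stays inside, hence the intersection $T_i^\delta$ inherits the property. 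Finally, (iii) is a one-line triangle inequality: for $x$ with $\sfd(x,x_i)<2\pi-\delta$ and any $j\neq i$, $\sfd(x,x_j)\geq\sfd(x_i,x_j)-\sfd(x,x_i)>2\pi+\delta$, so $\sfd(x,x_i)-\sfd(x,x_j)<-2\delta$, strictly below both $\delta$ and $-\delta$; hence $x\in U_{i,j}^\delta\cap U_{i,j}^{-\delta}$ for every $j\neq i$, and $x\in T_i^\delta$.

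The main obstacle I anticipate is the openness of $T_i^\delta$, since the intersection in~\eqref{eq:Tij_notation} runs over countably many $j$ and a priori only produces a $G_\delta$ set. To handle this, I would exploit the $4\pi$-separation more quantitatively: the $2$-Lipschitz function $f_{i,j}(x)\coloneqq\sfd(x,x_i)-\sfd(x,x_j)$ obeys $f_{i,j}(x)\leq 2\sfd(x,x_i)-\sfd(x_i,x_j)$, so for $x\in T_i^\delta$ with $\sfd(x,x_i)=R$ and every index $j$ with $\sfd(x_i,x_j)>2R+4\pi$, one has $f_{i,j}(x)\leq -4\pi$, giving a uniform distance at least $2\pi-\delta/2$ from $x$ to the level set $\{f_{i,j}=\pm\delta\}$. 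Consequently the only locally binding constraints near $x$ come from indices $j$ with $\sfd(x_i,x_j)\leq 2R+4\pi$; in the settings of interest the ambient is proper (cf.\ \cite[Thm.\ 5.3]{Ket17} for essentially non-branching ${\sf CD}$ spaces), so such indices are finite in number and the intersection near $x$ reduces to a finite one, which is open. A clean alternative is to redefine $T_i^\delta$ as the topological interior of the set in~\eqref{eq:Tij_notation}: the discarded boundary pieces are contained in $\bigcup_{i,j}\{f_{i,j}=\pm\delta\}$ and are $\mm$-null by (i), while the open ball $B_{2\pi-\delta}(x_i)$ from (iii) lies automatically in that interior, and star-shapedness can be transferred via the same margin argument.
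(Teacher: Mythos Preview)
Your argument is correct and follows the same route as the paper: both take the sets $T_i^\delta$ from \eqref{eq:Tij_notation}, set $S=\bigcup_{i,j}S_{i,j}$ with $S_{i,j}$ from \cref{lemma:perturbation_is_starshaped}, obtain (i) via \cref{lem:alimentiamo l'ego di Emanuele}, (ii) via \cref{lemma:perturbation_is_starshaped}, and (iii) via the triangle inequality and $4\pi$-separation. You are in fact more careful on two points the paper glosses over. First, you note that star-shapedness of the intersection needs the ``every geodesic stays inside'' property shown in the \emph{proof} of \cref{lemma:perturbation_is_starshaped}, not merely existence of one geodesic as in \cref{def:starshaped}. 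Second, you flag openness of the countable intersection $T_i^\delta$, which the paper does not verify; your margin argument combined with properness is correct and is all that is needed for the paper's applications (the ambient ${\sf CD}(k,N)$ spaces are proper). Your alternative ``pass to the interior'' fix is shakier in full generality: without properness it is not clear that $T_i^\delta\setminus\mathrm{int}(T_i^\delta)$ lies in $\bigcup_{i,j}\{f_{i,j}=\pm\delta\}$, since the supremum of $f_{i,j}(x)$ over $j$ can equal $\pm\delta$ without being attained by any single $j$, and the transfer of star-shapedness to the interior would also need justification.
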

\begin{proof}
For every $\delta\in \R$, let $\{U_{i,j}^\delta\}_{i,j\in I}$ and $\{T_i^\delta\}_{i\in I}$ the collections of sets associated to $\{x_i\}_{i\in I}$, as defined in \eqref{eq:Tij_notation} (note that $I$ is countable, since $(\X,\sfd)$ is separable). For every $i< j$, let $S_{i,j}$ be the countable set identified by Lemma \ref{lemma:perturbation_is_starshaped} and define the countable set $S\coloneqq  \cup_{i,j} S_{i,j}$. We will prove that $\{T_i^\delta\}$ satisfies i), ii), iii) for $\delta \in (0,\pi)\setminus S$. For ease of notation, we set $T_i\coloneqq T_i^\delta$.

First, we observe that ii) holds thanks to Lemma \ref{lemma:perturbation_is_starshaped}, which says that $T_i$ is the intersection of star-shaped domains at $x_i$. Secondly, for item i), observe that by construction, $\{T_i\}_i$ are pairwise disjoint. Moreover, since $\{x_i\}_{i \in I}$ are $4\pi$-separated, by \eqref{eq:negligible_measure}, combined with Lemma \ref{lem:alimentiamo l'ego di Emanuele}, we also deuce that $\mm(\X\setminus \cup_i T_i)=0$. We are left to prove iii), and we start by proving the first inclusion. If this were not true, there would exist a point $z \in \X$ such that $\sfd(z,x_i) < 2 \pi -\delta$ and an index $j \in I$ such that, either $j >i$ and $z \notin U_{i,j}^\delta$ or $j <i$ and $z \notin U_{i,j}^{-\delta}$. In the first case, it holds
\begin{equation*}
    \delta + \sfd(z,x_j) \le \sfd(z,x_i) < 2\pi -\delta.
\end{equation*}
This implies that $\sfd(z,x_j) < 2\pi-2\delta < 2\pi$, which contradicts the fact that $\{x_i\}_i$ are $4\pi$-separated. In the second case, it holds
\begin{equation*}
    -\delta + \sfd(z,x_j) \le \sfd(z,x_i) < 2\pi -\delta,
\end{equation*}
and for similar reasons we contradict that $\{x_i\}_{i \in I}$ are $4\pi$-separated. 
\end{proof}
\subsection{Spherical integral estimate}
In the upcoming parts, except for our main results (Theorems \ref{thm:diameter_estimate_body}, \ref{thm:diameter_estimate}), we treat the case $K=N-1$. Clearly, all the intermediate technical results admit a straightforward generalization for general $K>0$ by a scaling argument (cf.\ \eqref{eq:scaling}).

We prove next that large balls must have a small spherical integral, provided the scaling invariant integral curvature deficit is small enough. Here, by large, we mean that the radius is exceeding the maximal one in the comparison model. We closely follow \cite[Lemma 4.1]{Aubry2007}.
\begin{lemma}
\label{lemma:small_perimeter_of_large_balls}
    For every $N>1$ and $p>N/2$ there exists a constant $D_{N,p}>0$ such that the following holds.  Let $(\X,\sfd,\mm)$ be an essentially non-branching ${\sf CD}(k,N)$ space for some $k\colon \X\to\R$ admissible and with $\mm(B_\varrho(x))=o(\varrho)$ for some $x\in\X$. Let  $T\subset B_{R}(x)$, for some $R>0$, be a bounded star-shaped set at $x$ satisfying
    \[
        \eps \coloneqq R^2 \left(\frac{\rho_p^k(T,N-1)}{\mm(T)}\right)^\frac{1}{p} \le \left(\frac{\pi}{6}\right)^{2-\frac 1p }.
    \]
    Then, for all $r \in \left(\pi,R\right)$, it holds
    \[
        S_T(r) \le D_{N,p}\eps^{\frac{p(N-1)}{2p-1}}\frac{\mm(T)}{r}. 
    \]
\end{lemma}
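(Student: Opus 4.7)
The strategy is to reduce to a one-dimensional analysis via the localization method of Theorem \ref{thm:localization}, then perform a Riccati-type analysis along each ray to bound the density past the model diameter $\pi$. Applying Theorem \ref{thm:localization} with the $1$-Lipschitz function $u = \sfd_x$, I obtain the disintegration $\mm = \int_Q \mm_q \, d\frq$ with $\mm_q = g(q,\cdot)_\sharp(h_q \Leb^1)$ and $h_q$ a ${\sf CD}(\kappa_q,N)$ density on $[0,r_q]$, where $\kappa_q \coloneqq k \circ g(q,\cdot)$. Set $\rho_q(t) \coloneqq |(\kappa_q(t) - (N-1)) \wedge 0|$. By Lemma \ref{lem:polar_T} the spherical integral reads $S_T(r) = \int_{Q_T(r)} h_q(r)\, d\frq$, with the integration effectively only over rays with $r_q > r > \pi$.

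For each such ray I aim to bound $h_q(r)$ quantitatively in terms of the one-dimensional deficit $\Psi_q \coloneqq \int_0^{r_q} \rho_q^p h_q \, dt$. The heuristic is that, under the rigid assumption $\kappa_q \ge N-1$ (no deficit), a Myers-type argument applied to the weighted interval $([0,r_q], |\cdot|, h_q \Leb^1)$ would force $r_q \le \pi$, contradicting $r_q > r$; with nonzero deficit, the density past $\pi$ is controlled by this deficit. Concretely, I first apply the second part of Proposition \ref{prop:mean vs deficit 1D} to obtain $\sin^{4p-N-1}(r)\, \psi_q^{2p-1}(r)\, h_q(r) \le \alpha_{N,p} \Psi_q$ on $(\pi/2,\pi)$. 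Then I integrate the one-sided inequality $(\log h_q)'(r) \le H_{N-1,N}(r) + \psi_q(r)$ from $\pi/2$ up to some $r^* \in (\pi/2,\pi)$ to obtain $h_q(r^*) \le h_q(\pi/2)\, \sin^{N-1}(r^*)\, \exp\!\big(\int_{\pi/2}^{r^*} \psi_q\, dt\big)$, where the exponential is estimated via Hölder's inequality and the preceding $\psi_q$ bound. Finally, to continue past $\pi$, I use the Riccati inequality $\Phi_q' + \Phi_q^2/(N-1) \le -\kappa_q$ for $\Phi_q = (\log h_q)'$: starting from a boundary value just before $\pi$, the growth of $h_q$ past $\pi$ is controlled by the tail contribution of the deficit. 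This chain of estimates should yield a pointwise bound of the form
\[
    h_q(r) \le C_{N,p}(r)\, \Psi_q^{(N-1)/(2p-1)} \cdot (\text{auxiliary factors in $h_q(\pi/2)$}).
\]

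Integrating over $Q_T(r)$ and applying Hölder's inequality with a conjugate pair chosen to match the exponent $(N-1)/(2p-1)$, together with the disintegration identity $\int_Q \Psi_q \, d\frq = \rho_p^k(T,N-1)$, yields
\[
    S_T(r) \le D_{N,p} \cdot \frac{\mm(T)}{r} \cdot \bigg(R^2 \Big(\frac{\rho_p^k(T,N-1)}{\mm(T)}\Big)^{1/p}\bigg)^{p(N-1)/(2p-1)},
\]
which is the claimed inequality. The smallness hypothesis $\eps \le (\pi/6)^{2-1/p}$ is used to absorb into $D_{N,p}$ the residual factors of $R/r$ and of $\sin^{-1}$ accumulated along the way.

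The main obstacle is the continuation of the Riccati comparison past the singular point $\pi$: the model profile $H_{N-1,N}$ diverges there, so direct comparison with the $(K,N)$-model breaks down, and control past $\pi$ must instead be inherited from a carefully chosen boundary value at some $r_0 < \pi$, itself coming from the delicate $L^{2p-1}$-weighted estimate of Proposition \ref{prop:mean vs deficit 1D}. Matching exponents to produce exactly the power $p(N-1)/(2p-1)$ stated in the lemma requires a judicious choice of Hölder pairs across the singular radius.
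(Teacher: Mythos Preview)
Your approach has a genuine gap at exactly the point you flag as ``the main obstacle'': you never explain how to continue the comparison past $r = \pi$, where $H_{N-1,N}$ blows up. Saying control is ``inherited from a carefully chosen boundary value at some $r_0 < \pi$'' is not a mechanism; the Riccati inequality $\Phi_q' + \Phi_q^2/(N-1) \le -\kappa_q$ gives no immediate control on $h_q(r)$ for $r > \pi$ once the model mean curvature has diverged, and the estimates of Proposition~\ref{prop:mean vs deficit 1D} are stated only on $(0,\pi)$. The asserted pointwise bound $h_q(r) \le C_{N,p}(r)\,\Psi_q^{(N-1)/(2p-1)} \cdot (\text{factors in } h_q(\pi/2))$ is therefore unproven, and with it the entire integration step.

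The paper avoids this problem by a different device (following Aubry): rather than working ray-by-ray with the $(N-1,N)$-model, it works directly with the spherical integral $S_T$ via the monotonicity of $f_\alpha$ in Lemma~\ref{lem:2.2fatto_meglio}~iii), and compares with a $(\lambda,N)$-model where $\lambda < N-1$ is chosen so that the model diameter $\pi\sqrt{(N-1)/\lambda}$ is just barely larger than $r$ --- specifically $\lambda/(N-1) = (\pi - \eps')^2/r^2$ with $\eps' = \eps^{p/(2p-1)}$. This puts $r$ strictly inside the domain where the model profile $h_{\lambda,N}$ is positive, so no singularity is crossed. The monotonicity of $f_{1/(2p-1)}$ on $(0,\pi\sqrt{(N-1)/\lambda})$ then bounds $(S_T(r)/h_{\lambda,N}(r))^{1/(2p-1)}$ in terms of $(S_T(t)/h_{\lambda,N}(t))^{1/(2p-1)}$ plus an error integral $\mathcal{J}$; H\"older and Proposition~\ref{prop:mean vs deficit 1D} (now applied on $(0,\pi\sqrt{(N-1)/\lambda})$, where it is valid) control $\mathcal{J}$, and the smallness of $h_{\lambda,N}(r) \sim (\eps')^{N-1}$ produces the power $\eps^{p(N-1)/(2p-1)}$. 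The key idea you are missing is this change of comparison model to sidestep the singularity, combined with working at the level of $S_T$ rather than individual $h_q$.
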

\begin{proof}
    Let us apply the monotonicity of item iii) of Lemma \ref{lem:2.2fatto_meglio}, for $0<\lambda\le N-1$ to be chosen later, $\alpha = 1/(2p-1)$ and $0<t<r \le \pi \sqrt{\frac{N-1}{\lambda}}$, to deduce that
    \[
        \left( \frac{S_T(r)}{h_{\lambda ,N}(r)}\right)^{\frac1{2p-1}} - \left( \frac{S_T(t)}{h_{\lambda ,N}(t)}\right)^{\frac1{2p-1}} \le \frac{1}{2p-1}\int_t^r\left( \frac{S_T(s)}{h_{\lambda ,N}(s)}\right)^{\frac{2-2p}{2p-1}}\int_{Q_T(s)} \psi_q(s)\frac{h_q(s)}{h_{\lambda,N}(s)}\,\d \mathfrak q\d s \eqqcolon \frac{{\mathcal J}}{2p-1}.
    \]
    To estimate the term ${\mathcal J}$, we firstly use H\"older inequality to obtain 
    \begin{equation*}
    \begin{aligned}
        {\mathcal J} &= \int_t^r S_T(s)^{\frac{2-2p}{2p-1}}\int_{Q_T(s)} \sin\left(\sqrt{\tfrac{\lambda}{N-1}}s\right)^{\frac{1-N}{2p-1}}\psi_q(s)h_q(s)\,\d \mathfrak q\d s\\
        &\le\int_t^r S_T(s)^{\frac{2-2p}{2p-1}}\left(\int_{Q_T(s)} h_q(s)\,\d \mathfrak q\right)^{\frac{2p-2}{2p-1}}\left(\int_{Q_T(s)} \sin\left(\sqrt{\tfrac{\lambda}{N-1}}s\right)^{1-N}\psi_q(s)^{2p-1}h_q(s)\,\d \mathfrak q\right)^{\frac1{2p-1}}\d s\\
        &= \int_t^r \sin\left(\sqrt{\tfrac{\lambda}{N-1}}s\right)^{-2}\left(\int_{Q_T(s)}\sin\left(\sqrt{\tfrac{\lambda}{N-1}}s\right)^{4p-N-1}\psi_q^{2p-1}(s)h_q(s)\,\d\mathfrak q\right)^{\frac{1}{2p-1}}\,\d s,
    \end{aligned}
    \end{equation*}
    where, in the last equality, we used the definition of $S_T$.
    Using then Proposition \ref{prop:mean vs deficit 1D} (which we can apply by Lemma \ref{lem:average integral tend zero}), we get
    \begin{equation}
    \begin{aligned}
    \label{eq:estimate_of_I}
        {\mathcal J} &\le \alpha_{N,p}^{\frac1{2p-1}} \rho_p^k(T,\lambda)^{\tfrac{1}{2p-1}} \int_t^r  \sin\left(\sqrt{\tfrac{\lambda}{N-1}}s\right)^{-2}\,\d s.
    \end{aligned}
    \end{equation}
    Note that in the application of Proposition \ref{prop:mean vs deficit 1D} we are using both conclusions at the same time. (Whenever $t<s<\frac{\pi}{2}\sqrt{\tfrac{N-1}{\lambda}}\le r$, we simply use $\sin(s)^{4p-N-1} \le 1$). 
    
    We now choose $\lambda$. Define $\eps'\coloneqq \eps^{\frac{p}{2p-1}}$ and let $\lambda =\lambda(\eps,r)$ be defined by $\frac{\lambda}{N-1}=\frac{(\pi-\eps')^2}{r^2}$ (so that, in particular, $r<\frac{\pi}{2}\sqrt{\frac{N-1}{\lambda}}$ also holds). In the sequel, we shall consider $t\in [\frac{\pi}{2(\pi-\eps')}r,r]$. Since $\sin\left(\sqrt{\tfrac{\lambda}{N-1}} s\right) \ge \frac{2}{\pi} \left( \pi -\sqrt{\tfrac{\lambda}{ N-1}} s \right)$ (recall $0\le s \le \frac{\pi}{2}\sqrt{\frac{N-1}{\lambda}}$), we have
    \begin{equation*}
    \begin{aligned}
        \int_t^r \sin\left(\sqrt{\tfrac{\lambda}{N-1}}s\right)^{-2}\d s &\le \frac{\pi^2}{4} \int_t^r\left(\pi-\sqrt{\tfrac{\lambda}{N-1}} s\right)^{-2}\d s = \frac{\pi^2}{4} \frac{r-t}{\left( \pi-\sqrt{\tfrac{\lambda}{N-1}} t \right) \left( \pi-\sqrt{\tfrac{\lambda}{N-1}} r \right)}
        &\le \frac{\pi}{2}\frac{r}{\eps'}.
    \end{aligned}
    \end{equation*}
    As $\lambda \le N -1$, it holds $\rho_p^k(T,\lambda) \le \rho_p^k(T,N-1)$ so that \eqref{eq:estimate_of_I} and the above yield
    \begin{equation*}
        {\mathcal J} \le \alpha_{N,p}^{\frac1{2p-1}} \rho_p^k(T,N-1)^{\frac{1}{2p-1}} \frac{\pi}{2}\frac{r}{\eps'} =  \frac{\pi}{2} \alpha_{N,p}^{\frac1{2p-1}} \mm(T)^{\frac{1}{2p-1}}R^{-\frac{1}{2p-1}}.
    \end{equation*}
    All in all, after multiplication using the identity $ h_{K,\lambda}(r) = \sin\left( \sqrt{\frac{\lambda}{N-1}} r\right)^{\frac{N-1}{2p-1}}$, we obtain
    \begin{align*}
         S_T(r)^{\frac{1}{2p-1}} &\le \left( S_T(t)\frac{ h_{K,\lambda}(r)}{h_{\lambda ,N}(t)}\right)^{\frac1{2p-1}} +   \frac{\pi}{4p-2} \alpha_{N,p}^{\frac1{2p-1}} \mm(T)^{\frac{1}{2p-1}}R^{-\frac{1}{2p-1}}  h_{K,\lambda}(r)^{\frac{p}{2p-1}} \\
         &= S_T(t)^{\frac1{2p-1}} \left(\frac{\eps'}{\sin\left( \frac{t(\pi-\eps')}{r}\right)} \right)^{\frac{N-1}{2p-1}} +   \frac{\pi}{4p-2} \alpha_{N,p}^{\frac1{2p-1}} \mm(T)^{\frac{1}{2p-1}} R^{-\frac{1}{2p-1}}(\eps')^{\frac{N-1}{2p-1}},
    \end{align*}
    having used, in the last inequality, that $ \sin\left( \sqrt{\frac{\lambda}{N-1}} r\right)\le \eps' $. From here, the conclusion of the proof follows by repeating verbatim the last part of the proof of \cite[Lemma 4.1]{Aubry2007}.
\end{proof}
\subsection{Proof of diameter estimate}
We next show that a small scale invariant integral curvature deficit on a sufficiently large star-shaped set is enough to guarantee a diameter estimate. This follows from the combination of the upper bound given by Lemma \ref{lemma:small_perimeter_of_large_balls} and the volume growth given by \cref{prop:volume_growth}. Compare the following with \cite[Lemma 1.4]{Aubry2007}.
\begin{proposition}\label{prop:Lemma_1.4_aubry}
Fix $\overline N>1$. For every $N \in [\overline N,\infty)$ and $p>N/2$ there exist constants $E_{N,p}>0$, $\gamma=\gamma_{\overline N,p}\in \big(0,\frac14\big]$ such that the following holds. Let $(\X,\sfd,\mm)$ be an essentially non-branching ${\sf CD}(k,N)$ space for some $k\colon \X\to\R$ admissible. Assume that $\mm(B_\varrho(z))=o(\varrho)$ at $\mm$-a.e.\ $z\in\X$. Let $T \subset \X$ be star-shaped at $x\in\X$ with $\mm(B_\varrho(x))=o(\varrho)$ and suppose that there exist $R_0,R$ such that $6\pi>R \ge R_0 > \pi$ and
$B_{R_0}(x) \subset T \subset B_{R}(x)$. Furthermore, suppose that
\[
\eps\coloneqq R^2 \left(\frac{\rho_p^k(T,N-1)}{\mm(T)}\right)^\frac{1}{p} \le E_{N,p}\left(1-\frac{\pi}{R_0}\right)^\frac{1}{\gamma}.
\]
Then, it holds $\X \subset B_{\pi + \frac{R_0}2 \eps^{\gamma}}(x)$ and, in particular, $(\X,\sfd)$ is compact.     
\end{proposition}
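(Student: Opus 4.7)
The plan is to follow the pattern of \cite[Lemma 1.4]{Aubry2007} and argue by contradiction, combining the spherical decay from Lemma \ref{lemma:small_perimeter_of_large_balls} (which controls the mass of $T$ outside $B_\pi(x)$) with the ball-chaining volume lower bound of \cref{prop:volume_growth} applied at a putative far point.

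First, using the polar coordinate formula \eqref{eq:polar disintegration_T} together with Lemma \ref{lemma:small_perimeter_of_large_balls}, I integrate $S_T$ on $(\pi,R)$:
\[
\mm(T\setminus \bar B_\pi(x)) = \int_\pi^R S_T(s)\,\d s \le D_{N,p}\log(R/\pi)\,\eps^{\frac{p(N-1)}{2p-1}}\mm(T) \le D_{N,p}\log 6\,\eps^{\frac{p(N-1)}{2p-1}}\mm(T),
\]
using $R\le 6\pi$. Choosing $E_{N,p}$ small this is $\le\mm(T)/2$, hence $\mm(B_{R_0}(x))\ge \mm(T)/2$. Next, suppose by contradiction that some $y\in\X$ satisfies $\sfd(x,y)>\pi+\tfrac{R_0}{2}\eps^\gamma$. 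Since $(\X,\sfd)$ is geodesic and proper by \cite[Thm.\ 5.3]{Ket17}, after replacing $y$ by a point on a geodesic from $x$ I may assume $\sfd(x,y)=\pi+\tfrac{R_0}{2}\eps^\gamma$. The hypothesis on $\eps$ (with $E_{N,p}\le 1$) gives $\eps^\gamma<1-\pi/R_0$, so setting $r\coloneqq\tfrac{R_0}{2}\eps^\gamma$ the triangle inequality places
\[
B_r(y) \subset B_{\pi+R_0\eps^\gamma}(x)\setminus \bar B_\pi(x) \subset B_{R_0}(x)\setminus \bar B_\pi(x) \subset T\setminus \bar B_\pi(x),
\]
whence $\mm(B_r(y))\le D_{N,p}\log 6\,\eps^{p(N-1)/(2p-1)}\mm(T)$.

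For the matching lower bound I apply \cref{prop:volume_growth} to the ball $B_{R_0}(x)$ at base point $x$. Monotonicity of the deficit in $K$ gives $\rho_p^k(B_{R_0}(x),0)\le \rho_p^k(T,N-1)$, and together with $\mm(B_{R_0}(x))\ge \mm(T)/2$ this shows that the scaling-invariant deficit on $B_{R_0}(x)$ is at most $2^{1/p}\eps$, hence $\le B_{N,p}$ for $E_{N,p}$ small. The radius constraint $\sfd(x,y)+r=\pi+R_0\eps^\gamma\le R_0$ is the hypothesis on $\eps^\gamma$. \cref{prop:volume_growth} thus yields
\[
\left(\frac{\mm(B_r(y))}{\mm(B_{R_0}(x))}\right)^{1/(2p-1)} \ge \left(\frac{\eps^\gamma}{2}\right)^{\!N/(2p-1)}\!\left[\left(\frac{\eps^\gamma}{2}\right)^{\!2N/(2p-1)}\!(1-c\eps^{p/(2p-1)})-c\eps^{p/(2p-1)}\right],
\]
and as long as $\gamma<p/(2N)$, for $\eps$ small the bracket is at least $c'\eps^{2N\gamma/(2p-1)}$, giving $\mm(B_r(y))\gtrsim \eps^{3N\gamma}\mm(T)$.

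Comparing the two bounds forces $\eps^{3N\gamma}\lesssim \eps^{p(N-1)/(2p-1)}$, which fails for $\eps$ small whenever
\[
3N\gamma<\frac{p(N-1)}{2p-1}.
\]
The right-hand side is increasing in $N$ on $[\overline N,2p)$, so the worst case is $N=\overline N$, and I set $\gamma_{\overline N,p}\coloneqq \min\!\left\{\tfrac14,\tfrac{p(\overline N-1)}{6\overline N(2p-1)}\right\}$, which also satisfies $\gamma<p/(2N)$ for every admissible $N$. Taking $E_{N,p}$ small enough to absorb all the ``$\eps$ small'' thresholds collected above produces the contradiction; compactness then follows from properness of $(\X,\sfd)$. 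The main obstacle is precisely the bookkeeping of these thresholds: the hypothesis only supplies $\eps\le E_{N,p}(1-\pi/R_0)^{1/\gamma}$, and one must verify that every smallness requirement (monotonicity of the deficit, the input to \cref{prop:volume_growth}, dominance of the positive term in the bracket, and the final exponent inequality) is forced by a single choice of $E_{N,p}$, with $\gamma$ simultaneously small enough to make the final comparison productive and large enough to keep $(1-\pi/R_0)^{1/\gamma}$ a meaningful function of $R_0$.
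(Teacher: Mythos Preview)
Your argument is correct and follows the same contradiction strategy as the paper, with two minor but valid simplifications. First, you bound $\mm(B_r(y))$ by the full mass $\mm(T\setminus\bar B_\pi(x))$, whereas the paper places $B_\delta(y)$ (with $\delta=\tfrac{R_0}{4}\eps^\gamma$) inside the thinner annulus $B_{\pi+R_0\eps^\gamma}(x)\setminus\bar B_{\pi+\frac{R_0}{4}\eps^\gamma}(x)$ and integrates $S_T$ only there, gaining an extra factor $\eps^\gamma$; this is why the paper's final exponent comparison is $(3N-1)\gamma<\tfrac{p(N-1)}{2p-1}$ while yours is the slightly more restrictive $3N\gamma<\tfrac{p(N-1)}{2p-1}$. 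Second, you deduce $\mm(B_{R_0}(x))\ge\mm(T)/2$ directly from the same spherical-integral estimate, whereas the paper invokes the uniform doubling of Proposition~\ref{prop:doubling uniform at x}; your route is cleaner here. Both differences are harmless for the statement as written, since the constraint on $\gamma$ remains satisfiable uniformly in $N\ge\overline N$.
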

\begin{proof}
     Fix $\delta\coloneqq \frac{R_0}{4}\eps^\gamma$, for some $\gamma>0$ to be chosen in the sequel. The conclusion $\X \subset B_{\pi + \frac{R_0}2 \eps^\gamma}(x)$ follows if we show that 
    \begin{equation}
    \label{eq:claim_empty_annulus}
        U\coloneqq B_{\pi + \frac{R_0}2 \eps^\gamma+ \delta}(x)\setminus \overline{B_{\pi+\frac{R_0}2\eps^\gamma}(x)}=\emptyset.
    \end{equation}
    Indeed, if by contradiction there is $z \in \X$ with $\sfd(z,x) > \pi +\frac{R_0}2 \eps^\gamma$, then, since $(\X,\sfd)$ is geodesic, there exists an interior point of a geodesic connecting $x$ to $z$ and belonging to $U$. This contradicts the fact that the set is empty. We now prove claim \eqref{eq:claim_empty_annulus}. By contradiction, assume that there exists $y \in U$. 
    The choice of $\delta$ gives that $B_\delta(y) \subset B_{\pi + R_0 \eps^\gamma}(x)\setminus \overline{B_{\pi+\frac{R_0}4 \eps^\gamma}(x)}$. In addition, by hypothesis (taking $E_{N,p}\le 1$) we have $R_0\eps^\gamma\leq R_0-\pi$, $B_{\pi + R_0 \eps^\gamma}(x)\subset T$. Thus,
    By Lemma \ref{lemma:small_perimeter_of_large_balls}, we have 
    \begin{equation}
    \begin{aligned}
    \label{eq:upper_bound_on_the_volume_of_ball}
        \mm(B_\delta(y)) &\le \mm(B_{\pi + R_0 \eps^\gamma}(x)\setminus \overline{B_{\pi+\frac{R_0}4 \eps^\gamma}(x)}) \le \int_{\pi + \frac{R_0}4 \eps^\gamma}^{\pi + R_0 \eps^\gamma} S_T(r)\,\d r \\
        &\le D_{N,p} \mm(T) \eps^{\frac{p(N-1)}{2p-1}} \int_{\pi + \frac{R_0}4 \eps^\gamma}^{\pi + R_0 \eps^\gamma} \frac{1}{r}\,\d r
        \le \frac{3 }{4\pi}D_{N,p} R_0\mm(T)\eps^{\gamma + \frac{p(N-1)}{2p-1}}.
    \end{aligned}
    \end{equation}
    provided that $E_{N,p}\leq \left(\frac{\pi}{6}\right)^{2-1/p}$. In the last inequality, we used that $\int_{\pi + \frac{R_0}4 \eps^\gamma}^{\pi + R_0 \eps^\gamma} \frac{1}{r}\,\d r \le \frac{3}{4\pi}R_0\eps^\gamma$, since $x \mapsto x^{-1}$ is decreasing. Moreover, by monotonicity of $K\mapsto \rho_p^k(T,K)$, we deduce that $R^2\left(\frac{\rho_p^k(T,0)}{\mm(T)}\right)^\frac{1}{p} \le \eps$, therefore, up to taking $E_{N,p}\le A_{0,N,p}$, we can apply \eqref{eq:uniform doubling starshaped} to obtain
    \begin{equation}
    \label{eq:comparison_measure_of_the_ball_and_measure_of_T}
        \mm(B_{R_0}(x)) \ge \mm(T) (1-\beta_{N,p}\eps^{\frac{p}{2p-1}}) \left(\frac{R_0}{R}\right)^N  \ge \frac{\mm(T)}{2} \left(\frac{R_0}{R}\right)^N.
    \end{equation}
    Combining \eqref{eq:upper_bound_on_the_volume_of_ball} and \eqref{eq:comparison_measure_of_the_ball_and_measure_of_T}, and using that $\pi<R_0\leq R<6\pi$, we obtain
    \begin{equation}
    \label{eq:combination}
    \begin{aligned}
    \mm(B_\delta(y)) &
        \le \frac{3 }{2\pi}D_{N,p}R_0\eps^{\gamma + \frac{p(N-1)}{2p-1}}
        \mm(B_{R_0}(x)) \left(\frac{R}{R_0}\right)^N \leq 6^{N+2} D_{N,p}\eps^{\gamma + \frac{p(N-1)}{2p-1}}
        \mm(B_{R_0}(x)).
    \end{aligned}
    \end{equation}
    We now look for a lower bound for $\mm(B_\delta(y))$. Firstly, since $B_{R_0}(x)\subset T$ then $\rho_p^k(B_{R_0}(x),0)\leq \rho_p^k(B_{R_0}(x),N-1)\leq \rho_p^k(T,N-1)$. Hence, by \eqref{eq:comparison_measure_of_the_ball_and_measure_of_T}, we have
    \begin{equation}
    \label{eq:estimate_deficit_T_ball}
    \begin{split}
        R_0^2 \left(\frac{\rho_p^k(B_{R_0}(x),0)}{\mm(B_{R_0}(x))}\right)^\frac{1}{p} & \leq  2^{\frac{1}{p}} \left(\frac{R_0}{R}\right)^{2-\frac{N}{p}} R^2 \left(\frac{\rho_p^k(T,N-1)}{\mm(T)}\right)^\frac{1}{p} \le 2^{\frac{1}{p}} \eps \le E_{N,p},
    \end{split}
    \end{equation}
    where we used that $p>N/2$ and $R_0<R$. Therefore, up to taking $E_{N,p}\leq B_{N,p}$, we are in position to apply \cref{prop:volume_growth} to $B_{R_0}(x)$ to get
		\begin{equation}
		\label{eq:finito_i_nomi}
        \left(\frac{\mm(B_{\delta}(y))}{\mm(B_{R_0}(x))} \right)^\frac{1}{2p-1}\geq \left(\frac{\delta}{R_0}\right)^{\frac{N}{2p-1}}\left(\left(\frac{\delta}{R_0}\right)^{\frac{2N}{2p-1}}\left( 1- \beta_{N,p}\eps^{\frac{p}{2p-1}}\right)-\beta_{N,p}  \eps^\frac{p}{2p-1}\right).
		\end{equation}
		Secondly, by our choice of $\delta$, the right-hand side of \eqref{eq:finito_i_nomi} is strictly positive and such that 
        \begin{equation*}
            \left(\frac{\delta}{R_0}\right)^{\frac{N}{2p-1}}\left(\left(\frac{\delta}{R_0}\right)^{\frac{2N}{2p-1}}\left( 1- \beta_{N,p}\eps^{\frac{p}{2p-1}}\right)-\beta_{N,p}  \eps^\frac{p}{2p-1}\right) \geq \,\eps^{\frac{3N\gamma}{2p-1}},
        \end{equation*}
        provided that $\gamma\leq\frac14$ and $E_{N,p}$ is chosen sufficiently small. Thus, \eqref{eq:finito_i_nomi} and the estimate above yield
    \begin{equation}
    \label{eq:volume_growth_for_small_eps}
        \frac{\mm(B_\delta(y))}{\mm(B_{R_0}(x))} \ge   \eps^{3N\gamma}.
    \end{equation}
    	In conclusion, the combination of \eqref{eq:combination} and  \eqref{eq:volume_growth_for_small_eps} gives that 
    \begin{equation*}
        \eps^{3N\gamma} \le \frac{\mm(B_\delta(y))}{\mm(B_{R_0}(x))} \le 6^{N+2} D_{N,p}\eps^{\gamma + \frac{p(N-1)}{2p-1}}.
    \end{equation*}

    This gives a contradiction if $\gamma + \frac{p(N-1)}{2p-1} > 3 N\gamma$. Since $N\geq \overline N$, we can choose $\gamma=\gamma_{\overline N,p}$ such that we reach the desired contradiction for every $N\geq \overline N$.
\end{proof}
Note that an easy consequence of the previous proposition is that ${\rm diam}(\X)\le 2\pi+R_0\eps^\gamma$. We aim at improving this diameter bound, and we can do so by iterating the previous argument.
\begin{corollary}\label{cor:diameter_estimate}
Fix $\overline N>1$. For every $N \in [ \overline N,\infty)$ and $p>N/2$ there exist constants $F_{N,p}>0$, $\gamma=\gamma_{\overline N,p}\in \big(0,\frac18\big]$ such that the following holds. Let $(\X,\sfd,\mm)$ be an essentially non-branching ${\sf CD}(k,N)$ space for some $k\colon \X\to\R$ admissible. Assume that $\mm(B_\varrho(z))=o(\varrho)$ at $\mm$-a.e.\ $z\in\X$. Let $T \subset \X$ be star-shaped at $x\in\X$ with $\mm(B_\varrho(x))=o(\varrho)$ and suppose that there exist $R_0,R$ such that $6\pi>R \ge R_0 > \pi$ and
$B_{R_0}(x) \subset T \subset B_{R}(x)$. Furthermore, suppose that
\[
\eps\coloneqq R^2 \left(\frac{\rho_p^k(T,N-1)}{\mm(T)}\right)^\frac{1}{p} \le F_{N,p}\left(1-\frac{\pi}{R_0}\right)^\frac{1}{\gamma}.
\]
Then, it holds ${\rm diam}(\X) \le \pi (1+3\pi\eps^{2\gamma})$.     
\end{corollary}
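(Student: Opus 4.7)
\emph{Overall strategy.} The plan is to apply Proposition~\ref{prop:Lemma_1.4_aubry} \emph{twice}: first with the given data, and then a second time at a generic interior center $z\in\X$ with $T_z=\X$. A single application only gives a one-sided inclusion $\X\subset B_{\pi+\mathcal O(\eps^{\gamma_1})}(x)$, so the triangle inequality only yields ${\rm diam}(\X)\le 2\pi+\mathcal O(\eps^{\gamma_1})$; the role of the second application is precisely to trade the leading constant $2\pi$ for $\pi$. Let $\gamma_1\in(0,1/4]$ be the constant supplied by Proposition~\ref{prop:Lemma_1.4_aubry} and set $\gamma\coloneqq \gamma_1/2\in(0,1/8]$. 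The constant $F_{N,p}$ will be chosen so that $F_{N,p}\le E_{N,p}$ and $F_{N,p}^{\gamma_1}\le 2$, plus finitely many further smallness conditions made explicit below.

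\emph{First application and the identity $\X=T$.} The hypothesis $\eps\le F_{N,p}(1-\pi/R_0)^{1/\gamma}$ implies $\eps\le E_{N,p}(1-\pi/R_0)^{1/\gamma_1}$ (since $1/\gamma=2/\gamma_1$ and $1-\pi/R_0\le 1$), so Proposition~\ref{prop:Lemma_1.4_aubry} gives $\X\subset B_{\pi+(R_0/2)\eps^{\gamma_1}}(x)$. The smallness $F_{N,p}^{\gamma_1}\le 2$ yields $(R_0/2)\eps^{\gamma_1}\le R_0-\pi$, hence $\X\subset B_{R_0}(x)\subset T$; together with $T\subset\X$ this forces the crucial identity $\X=T$. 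In particular
\[
\frac{\rho_p^k(\X,N-1)}{\mm(\X)}=\frac{\rho_p^k(T,N-1)}{\mm(T)}=(\eps/R^2)^p,
\]
i.e.\ the scale-invariant deficit of $\X$ is now known exactly.

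\emph{Second application.} Fix $z\in\X$ with $\mm(B_\varrho(z))=o(\varrho)$, which holds for $\mm$-a.e.\ $z$. Since $(\X,\sfd)$ is geodesic, $T_z\coloneqq\X$ is star-shaped at $z$, and $B_{R_{0,z}}(z)\subset T_z$ is trivial. From the first step, $\sfd(z,x)\le\pi+(R_0/2)\eps^{\gamma_1}$, which gives $T_z\subset B_{R_z}(z)$ with $R_z\coloneqq 2\pi+R_0\eps^{\gamma_1}\in(\pi,6\pi)$ for $\eps$ small. The new scale-invariant deficit is
\[
\eps_z\coloneqq R_z^{2}\Big(\tfrac{\rho_p^k(\X,N-1)}{\mm(\X)}\Big)^{1/p}=\Big(\tfrac{R_z}{R}\Big)^{2}\eps\le 9\eps,
\]
using $R_z\le 3\pi$ and $R\ge R_0>\pi$. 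Choosing $R_{0,z}\coloneqq\pi/\bigl(1-(\eps_z/E_{N,p})^{\gamma_1}\bigr)$ (which is close to $\pi$, in particular $\le 2\pi$) makes the hypothesis $\eps_z\le E_{N,p}(1-\pi/R_{0,z})^{1/\gamma_1}$ of Proposition~\ref{prop:Lemma_1.4_aubry} hold with equality. Applying the proposition at $z$ gives $\X\subset B_{\pi+(R_{0,z}/2)\eps_z^{\gamma_1}}(z)$; since $R_{0,z}/2\le\pi$ and $9^{\gamma_1}\le 9^{1/4}<2$,
\[
\sfd(z,w)\le \pi+2\pi\eps^{\gamma_1}=\pi+2\pi\eps^{2\gamma},\qquad\forall\,w\in\X.
\]

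\emph{Extension and conclusion.} For each fixed $w\in\X$ the map $z\mapsto\sfd(z,w)$ is continuous, and the previous bound holds for all $z$ in a set of full $\mm$-measure. By continuity and density of the $\mm$-good points, the inequality extends to every $z\in\X$, yielding ${\rm diam}(\X)\le\pi+2\pi\eps^{2\gamma}\le\pi(1+3\pi\eps^{2\gamma})$. The main technical hurdle is precisely this last step: the conclusion of Proposition~\ref{prop:Lemma_1.4_aubry} is only accessible at centers $z$ with $\mm(B_\varrho(z))=o(\varrho)$, so one must promote a bound valid $\mm$-a.e.\ to a genuine diameter bound. In our framework this is handled by continuity of $\sfd(\cdot,w)$ together with topological density of the good centers in $\mathrm{supp}(\mm)$ (and, after the first step, the fact that $\X=T$ ensures the scale-invariant deficit controls the geometry uniformly).
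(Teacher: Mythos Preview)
Your proof is correct and follows the same two-pass scheme as the paper: apply Proposition~\ref{prop:Lemma_1.4_aubry} once at $x$ to confine $\X$ to a ball of radius $\pi+O(\eps^{\gamma_1})$, and then a second time at a generic point to improve the leading constant from $2\pi$ to $\pi$. The implementation of the second pass differs. The paper argues by contradiction: it picks $z_1,z_2$ with $\sfd(z_1,z_2)>\pi+\tfrac{R_0}{2}\eps^{2\gamma}$, takes $T=B_{R_0}(z_1)$, and uses the doubling estimate of Proposition~\ref{prop:doubling uniform at x} (together with the inclusion $B_{R_0-\pi-\frac{R_0}{2}\eps^{\gamma}}(x)\subset B_{R_0}(z_1)$) to bound the scale-invariant deficit of $B_{R_0}(z_1)$ from above before reapplying Proposition~\ref{prop:Lemma_1.4_aubry}. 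You instead notice that the first pass already forces $\X\subset B_{R_0}(x)\subset T$, hence $\X=T$, so the scale-invariant deficit of $\X$ equals $(\eps/R^2)^p$ \emph{exactly}; taking $T_z=\X$ then makes the second application immediate without any doubling argument. This is a genuine simplification: it bypasses Proposition~\ref{prop:doubling uniform at x} entirely and yields a slightly sharper constant ($\pi+2\pi\eps^{2\gamma}$ rather than $\pi+\tfrac{R_0}{2}\eps^{2\gamma}\le\pi+3\pi^2\eps^{2\gamma}$). You are also more explicit than the paper about the density step needed to pass from centers $z$ with $\mm(B_\varrho(z))=o(\varrho)$ to arbitrary points; both proofs rely on this, but the paper leaves it implicit when applying Proposition~\ref{prop:Lemma_1.4_aubry} at $z_1$.
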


\begin{proof}
We fix $\gamma=\frac{\gamma_{\overline N,p}}2$ from Proposition \ref{prop:Lemma_1.4_aubry}. By contradiction, we assume that there exist two points $z_1$, $z_2 \in \X$ such that $\sfd(z_1,z_2)> \pi +\frac{R_0}2 \eps^{2\gamma}$. Firstly, by Proposition \ref{prop:Lemma_1.4_aubry}, choosing $F_{N,p}\le E_{N,p}$, we immediately deduce that $\X\subset B_{\pi+\frac{R_0}2\eps^\gamma}(x)$. In particular, $\sfd(x,z_1) < \pi + \frac{R_0}2 \eps^\gamma$ and thus $B_{R_0-\pi-\frac{R_0}2 \eps^\gamma}(x) \subset B_{R_0}(z_1)$. Secondly, we claim that 
		\begin{equation}
		\label{eq:defect_bound}
        R_0^2 \left(\frac{\rho^k_p(B_{R_0}(z_1),N-1)}{\mm(B_{R_0}(z_1))}\right)^{\frac{1}{p}} \leq 2^{\frac1p}F_{N,p}\left(1-\frac{\pi}{R_0}\right)^{\frac{1}{2\gamma}}.
    \end{equation}
    Indeed, on the one hand, since $B_{R_0-\pi-\frac{R_0}2 \eps^\gamma}(x) \subset B_{R_0}(z_1)$, one has $\mm(B_{R_0}(z_1))\ge \mm(B_{R_0-\pi-\frac{R_0}2 \eps^\gamma}(x))$ and, applying \cref{prop:doubling uniform at x}, we then obtain
    \begin{equation}
		\label{eq:basta}
        \frac{\mm(B_{R_0}(z_1))}{\mm(B_{R_0}(x))} \ge \frac{\mm\left(B_{R_0-\pi-\frac{R_0}2 \eps^\gamma}(x)\right)}{\mm(B_{R_0}(x))} \ge \left(\frac{R_0- \pi -\frac{R_0}2 \eps^\gamma}{R_0}\right)^N \ge \frac{1}{2}\left( \frac{R_0-\pi}{R_0} \right)^N.
    \end{equation}
		Note that here the assumptions of \cref{prop:doubling uniform at x} are verified since $R_0\le R$ and $\eps\le F_{N,p}\le A_{p,0,N}$. On the other hand, by Proposition \ref{prop:Lemma_1.4_aubry} and since $R_0\eps^\gamma\le R_0-\pi$, we also have that $B_{R_0}(z_1)\subset B_{\pi+\frac{R_0}2\eps^\gamma}(x)\subset B_{R_0}(x)$, which implies that $\rho_p^k(B_{R_0}(z_1),N-1) \le \rho_p^k(B_{R_0}(x),N-1)$, by monotonicity $ \rho_p^k(\cdot,N-1)$ for set inclusions. Combining this observation with \eqref{eq:basta}, we infer 
    \begin{equation*}
    \begin{aligned}
        R_0^2 \left(\frac{\rho_p^k(B_{R_0}(z_1),N-1)}{\mm(B_{R_0}(z_1))}\right)^\frac{1}{p} &\le R_0^2 \left(\frac{\mm(B_{R_0}(x))}{\mm(B_{R_0}(z_1))}\right)^{\frac{1}{p}}\left(\frac{\rho_p^k(B_{R_0}(x),N-1)}{\mm(B_{R_0}(x))}\right)^\frac{1}{p}  \le 2^{\frac1p}\left(  \frac{R_0}{R_0-\pi}\right)^{\frac{N}{p}} \eps,
    \end{aligned}
    \end{equation*}
    where, in the last inequality, we used \eqref{eq:estimate_deficit_T_ball} (which is available since we chose $F_{N,p}\leq E_{N,p}$). Now, by the assumption on $\eps$, we have that 
    \begin{equation*}
        \left(  \frac{R_0}{R_0-\pi}\right)^{\frac{N}{p}} \eps\leq F_{N,p}\left(1-\frac{\pi}{R_0}\right)^{\frac{1}{\gamma}-\frac{N}{p}}\leq F_{N,p}\left(1-\frac{\pi}{R_0}\right)^{\frac{1}{2\gamma}},
    \end{equation*}
    since $\frac{1}{\gamma}-\frac{N}p\geq\frac{1}{2\gamma}$ by the constraint on $p$ and the choice of $\gamma\leq\frac14$. This proves claim \eqref{eq:defect_bound}. 
    
    To conclude the proof, note that, thanks to the estimate \eqref{eq:defect_bound}, we can apply Proposition \ref{prop:Lemma_1.4_aubry} with $T=B_{R_0}(z_1)$ star-shaped set at $z_1$, up to taking a smaller $F_{N,p}$. This shows that $\X\subset B_{\pi+\frac{R_0}2\eps^{2\gamma}}(z_1)$, which is in contradiction with the fact that $\sfd(z_1,z_2)>\pi+\frac{R_0}2\eps^{2\gamma}$.
\end{proof}
\begin{remark}\label{rem:gamma ge 2}\rm
    In the previous statements, observe that if $\overline N=2$, $\gamma$ can be chosen equal to $1/10$ independently of $p$. Additionally, by the proof of Proposition \ref{prop:Lemma_1.4_aubry}, we must have
    \begin{equation*}
        \gamma <\frac{p(N-1)}{(3N-1)(2p-1)},
    \end{equation*}
    hence if $N\to 1$, then $\gamma\to 0$. In this limiting case, we cannot deduce a diameter estimate. Indeed, for $N\geq 1$, the metric measure space $([0,2\pi],|\cdot|,r^{N-1}\Leb^1)$ is ${\sf CD}(0,N)$. The integral curvature deficit from $N-1$ can be taken arbitrarily small as $N\downarrow 1$, but the diameter is constantly equal to $2\pi$.
\fr
\end{remark}
Finally, we prove the Myers' diameter estimate, cf.\ Theorem \ref{thm:diameter_estimate}. The proof makes use of the essential partition of the metric measure space into a family of star-shaped set (cf.\ \cref{prop:partition_with_voronoi}). The key point is that, provided $\rho_p^k(\X,K)$ is small enough, Corollary \ref{cor:diameter_estimate} can be applied to at least one star-shaped set, giving in turn the desired diameter estimate.
\begin{theorem}
\label{thm:diameter_estimate_body}
    Fix $\overline N>1$. For every $N\in [\overline N,\infty),K>0$ and $p>N/2$ there exist constants $C_{K,N,\overline N,p},\gamma_{\overline N,p}>0$ such that the following holds. Let $(\X,\sfd,\mm)$ be an essentially non-branching ${\sf CD}(k,N)$ for some $k\colon \X\to\R$ admissible. Assume that $\mm(B_\varrho(x))=o(\varrho)$ at $\mm$-a.e.\ $x\in \X$.  If $\rho^k_p(\X,K)< \infty$  then $\mm(\X)<\infty$. Moreover, if 
    \[
        \rho^k_p(\X,K)\le \frac{\mm(\X)}{C_{K,N,\overline N,p}},
    \]
    then $(\X,\sfd)$ is compact and it holds
    \begin{equation}
        {\rm diam}(\X) \le \pi\sqrt{\frac{N-1}{K}}\left(1 + C_{K,N,\overline N,p} \left(\frac{\rho^k_p(\X,K)}{\mm(\X)}\right)^{2\gamma_{\overline N,p}}\right).\label{eq:diameter estim body}
    \end{equation}
\end{theorem}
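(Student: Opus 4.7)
The plan is to combine the essential star-shaped partition of \cref{prop:partition_with_voronoi} with the conditional diameter estimate of \cref{cor:diameter_estimate}. By the scaling invariance \eqref{eq:scaling}, it suffices to treat the case $K = N-1$: the dilation $\sfd \mapsto \sqrt{K/(N-1)}\,\sfd$ turns an essentially non-branching ${\sf CD}(k,N)$ space into an essentially non-branching ${\sf CD}(k(N-1)/K, N)$ space with rescaled curvature bound $N-1$, and the final prefactor $\sqrt{(N-1)/K}$ in \eqref{eq:diameter estim body} comes from reversing this dilation.

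Working in the rescaled space, I fix a maximal $4\pi$-separated subset $\{x_i\}_{i\in I}\subset\X$ (countable by separability) and, using the $\mm$-a.e.\ hypothesis, perturb the $x_i$ by an arbitrarily small amount so that each satisfies $\mm(B_\varrho(x_i))=o(\varrho)$; the $4\pi$-separation is preserved. For $\delta\in(0,2\pi)$ outside the countable exceptional set of \cref{prop:partition_with_voronoi}, the partition produces disjoint open sets $\{T_i^\delta\}_i$, each star-shaped at $x_i$, with $B_{2\pi-\delta}(x_i)\subset T_i^\delta$ and $\mm(\X\setminus\bigcup_i T_i^\delta)=0$. The defining inequalities of $T_i^\delta$ together with the maximality of $\{x_i\}$ imply $T_i^\delta\subset B_{4\pi+\delta}(x_i)$, because any $y\in T_i^\delta$ satisfies $\sfd(y,x_i)\leq\sfd(y,x_j)+\delta$ for all $j$, while maximality forces $\min_j\sfd(y,x_j)\leq 4\pi$.

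Assuming $\mm(\X)<\infty$ and setting $\eta\coloneqq\rho^k_p(\X,N-1)/\mm(\X)$, the additivity of both $\mm$ and $\rho^k_p(\cdot,N-1)$ over the partition yields an index $i^*\in I$ with $\rho^k_p(T_{i^*}^\delta,N-1)/\mm(T_{i^*}^\delta)\leq\eta$. The set $T\coloneqq T_{i^*}^\delta$ satisfies $B_{R_0}(x_{i^*})\subset T\subset B_R(x_{i^*})$ with $R_0=2\pi-\delta>\pi$ and $R=4\pi+\delta<6\pi$, so the hypotheses of \cref{cor:diameter_estimate} hold provided $R^2\eta^{1/p}\leq F_{N,p}(1-\pi/R_0)^{1/\gamma}$; this is ensured by choosing the constant $C_{K,N,\overline N,p}$ in the smallness assumption large enough, after accounting for the scaling. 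The corollary then yields $\mathrm{diam}(\X)\leq\pi(1+3\pi\varepsilon^{2\gamma})$ with $\varepsilon\lesssim\eta^{1/p}$; undoing the dilation, the estimate \eqref{eq:diameter estim body} follows with $\gamma_{\overline N,p}$ taken to be $\gamma_{\overline N,p}^{\mathrm{Cor}}/p$ from \cref{cor:diameter_estimate} (consistent with Remark~\ref{rem:gamma ge 2} for $\overline N=2$).

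The main obstacle is the finiteness assertion $\rho^k_p(\X,K)<\infty\Rightarrow\mm(\X)<\infty$, which must be established before applying the partition argument above. I would argue this contrapositively: if $\mm(\X)=\infty$, the maximal $4\pi$-separated set either is infinite or produces some $T_i^\delta$ with infinite measure; in either case a contradiction with finite deficit must follow. The heuristic is that each $T_i^\delta$ contains $B_{2\pi-\delta}(x_i)$, which has radius strictly greater than the model diameter $\pi$ for curvature $N-1$, so the Bishop-Gromov estimate \eqref{eq:bishop_gromov_quantitative_theta} applied at radii $r\uparrow\pi$ (and the saturation of $v_{N-1,N}$ beyond $\pi$) forces a uniform positive lower bound $\rho^k_p(T_i^\delta,N-1)\geq c_{N,p}>0$; summing over $i$ then makes $\rho^k_p(\X,N-1)$ infinite. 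Turning this heuristic into a clean quantitative statement---in particular controlling the residual volume of $T_i^\delta\setminus B_\pi(x_i)$ against the localized deficit---is the delicate step.
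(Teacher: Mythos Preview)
Your overall strategy---scaling reduction, star-shaped partition, pigeonhole selection of a good cell, then \cref{cor:diameter_estimate}---matches the paper exactly. There are two points worth flagging.

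First, a minor technical slip: after perturbing the centers so that $\mm(B_\varrho(x_i))=o(\varrho)$, the new family is only $(4\pi-2\eta)$-separated and need no longer be \emph{maximal}, so your one-line argument for the outer inclusion $T_i^\delta\subset B_{4\pi+\delta}(x_i)$ (which invokes maximality of the $x_i$) is not quite valid as stated. The paper keeps the original maximal $4\pi$-separated set $\{z_i\}$ around and runs the triangle-inequality argument through the $z_i$ to obtain $T_i\subset B_{4\pi+\delta+\eta}(x_i)$; this is routine to patch.

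Second, and more substantively, you correctly identify the finiteness implication $\rho_p^k(\X,K)<\infty\Rightarrow\mm(\X)<\infty$ as the obstacle, but your proposed Bishop--Gromov route is much harder than necessary. The paper bypasses it entirely with the following observation: set
\[
\alpha\coloneqq \inf_{i\in I}\left(\frac{\rho_p^k(T_i,N-1)}{\mm(T_i)}\right)^{1/p}.
\]
If $\alpha>0$, then summing $\rho_p^k(T_i,N-1)\ge\alpha^p\,\mm(T_i)$ over the essential partition gives $\rho_p^k(\X,N-1)\ge\alpha^p\,\mm(\X)$, so $\mm(\X)<\infty$ directly. If $\alpha=0$, then some $T_j$ has deficit ratio as small as you like, so the hypothesis of \cref{cor:diameter_estimate} is met for that cell, yielding $\mathrm{diam}(\X)<\infty$ and hence $\mm(\X)<\infty$ (the space is proper). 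This two-case split on $\alpha$ is exactly your pigeonhole argument run in reverse, and it dispenses with the ``delicate step'' you were worried about.
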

\begin{proof}
    We fix $\gamma=\gamma_{\overline N,p}$ from Corollary \ref{cor:diameter_estimate}. We shall prove the statement for $K=N-1$ and subdivide the proof into different steps. The statement and the estimate \eqref{eq:diameter estim body} would then follow for general $K>0$ by a scaling argument (cf. \eqref{eq:scaling}) with the constant $C_{K,N,\overline N,p} = C_{N-1,N,\overline N,p} \left(\frac{N-1}{K}\right)^{p\gamma}$.

    \noindent\textsc{Step 1: Construction of a star-shaped partition}. Consider a countable maximal collection $\{z_i\}_{i \in I}$ that is $4\pi$-separated in $(\X,\sfd)$ and fix $\eta\in (0,\pi/64)$. Then, by the assumptions, for every $i\in I$ there exists $x_i\in B_\eta(z_i)$ such that $\mm(B_\varrho(x_i))=o(\varrho)$. The new collection of points $\{x_i\}_{i\in I}$ is $(4\pi-2\eta)$-separated however it may be non-maximal. Hence, choosing $\delta$ in a full-measure subset of $(\pi/16,\pi/8)$, and denoting by $\{T_i^{\delta-2\eta}\}_{i\in I}$ the sets defined in \eqref{eq:Tij_notation} using the collection $\{x_i\}_{i\in I}$, Proposition \ref{prop:partition_with_voronoi} holds. In particular, for every $i\in I$, $B_{2\pi-\delta+\eta}(x_i)\subset T_i^{\delta-2\eta}$.
    
    We now show that also the inclusion $T_i^{\delta-2\eta}\subset B_{4\pi+\delta+\eta}(x_i)$ holds. Suppose by contradiction that this is not the case, so there exists a point $x\in T_i^{\delta-2\eta}\setminus B_{4\pi+\delta+\eta}(x_i)$. On the one hand, this implies 
    \begin{equation}
    \label{eq:contradiction_net}
        \sfd(x,z_i)\geq \sfd(x,x_i)-\sfd(x_i,z_i) \geq 4\pi +\delta >4\pi.
    \end{equation}
    On the other hand, since $x\in T_i^{\delta-2\eta}=\bigcap_{j>i}\tilde U_{i,j}^{\delta-2\eta}\cap \bigcap_{j<i}\tilde U_{i,j}^{-\delta+2\eta}$, using the definition of the sets $\tilde U_{i,j}^{\delta-2\eta}$, the fact that $x_i\in B_\eta(z_i)$, and the triangle inequality, we have, for every $j<i$,
    \begin{equation*}
        \sfd(x,z_j) \geq \sfd(x,x_j)-\sfd(z_j,x_j) > \sfd(x,x_i)-\delta+\eta \geq \sfd(x,z_i)-\sfd(z_i,x_i) -\delta +\eta> \sfd(x,z_i)-\delta \geq 4\pi,
    \end{equation*}
    where, in the last inequality, we used \eqref{eq:contradiction_net}. Analogously, for every $j<i$, we have
    \begin{equation*}
        \sfd(x,z_j)\geq \sfd(x,x_j) -\eta > \sfd(x,x_i)+\delta-3\eta \geq  \sfd(x,z_i)+\delta -4\eta> 4\pi.
    \end{equation*}
    This is in contradiction with the fact that the original family $\{z_i\}_{i\in I}$ was $4\pi$-maximally separated.
    
    \noindent\textsc{Step 2: $\mm(\X)<\infty$}. We consider the family $\{x_i\}_{i\in I}$ from the previous step. Note that  $\mm(B_\varrho(x_i))=o(\varrho)$ for every $i\in I$. In addition, denoting by $\{T_i\}_{i \in I}$ the sets defined in \eqref{eq:Tij_notation} for a suitable parameter $\delta\in (\pi/32,\pi/4)$, they satisfy items i) and ii) of Proposition \ref{prop:partition_with_voronoi} and it holds
    \begin{equation}
        B_{2\pi-2\delta}(x_i)\subset T_i \subset B_{4\pi+2\delta}(x_i).\label{eq:inclusione old iii}
    \end{equation}
    We also define
    \[\alpha\coloneqq \inf_{i \in I} \left( \frac{\rho_p^k(T_i,N-1)}{\mm(T_i)} \right)^{\frac{1}{p}}.
    \]
    We first consider the case $\alpha=0$. There exists $T_j$ such that Corollary \ref{cor:diameter_estimate} can be applied with $R_0\coloneqq 2\pi-2\delta\geq \frac32\pi>\pi$, $R\coloneqq 4\pi+2\delta<6\pi$, thus in particular proving that ${\rm diam}(\X)<\infty$, so also $\mm(\X)<\infty$. We now consider the case $\alpha >0$. In this case,
    \begin{equation}
    \label{eq:local_to_global_deficit}
        \infty > \rho_p^k(\X,N-1) = \sum_{i=1}^\infty \rho_p^k(T_i,N-1) \ge \alpha^p \sum_{i=1}^\infty \mm(T_i) =\alpha^p \mm(\X),
    \end{equation}
    where we repeatedly used item i) of Proposition \ref{prop:partition_with_voronoi}. Thus, the conclusion $\mm(\X)<\infty$ holds.
    
    \noindent\textsc{Step 3: Diameter estimate}. Let $F_{N,p}>0$ be the constant given by Corollary \ref{cor:diameter_estimate}. We shall prove \eqref{eq:diameter estim body} with the choice $C_{N-1,N,\overline N,p}\coloneqq 2 \left( \frac{4^{1/\gamma} 36 \pi^2 }{F_{N,p}} \right)^p$. We first claim that $\alpha < \frac{F_{N,p}}{4^{1/\gamma} 36 \pi^2 }$. If not, observe that $\alpha \ge \frac{F_{N,p}}{4^{1/\gamma} 36 \pi^2 }$ and \eqref{eq:local_to_global_deficit} yields 
    \begin{equation*}
        \mm(\X) \le \frac{1}{2} C_{N-1,N,\overline N,p} \,\rho_p^k(\X,N-1) < C_{N-1,N,\overline N,p} \,\rho_p^k(\X,N-1).
    \end{equation*} 
    This is not possible by the assumptions, and the claim is proved. Now, by definition of $\alpha$, we have that there exists $j \in I$ such that $T_{j}$ satisfies 
    \begin{equation*}
        \left(\frac{\rho_p^k(T_{j},N-1)}{\mm(T_{j})} \right)^{\frac{1}{p}} < \frac{F_{N,p}}{4^{1/\gamma} 36 \pi^2 }.    
    \end{equation*}
    Recalling \eqref{eq:inclusione old iii}, we are in position to apply Corollary \ref{cor:diameter_estimate} to the set $T_{j}$ with $R\coloneqq 4\pi+2\delta<6\pi$ and $R_0\coloneqq 2\pi-2\delta > \pi$. Indeed, note that $1-\frac{\pi}{R_0}\geq \frac14$ by the choice of $R_0$, and then
 	 \begin{equation*}
        R^2 \left(\frac{\rho_p^k(T_{j},N-1)}{\mm(T_{j})} \right)^{\frac{1}{p}} \le \frac{F_{N,p}}{4^{1/\gamma} 36 \pi^2} (4\pi+\delta)^2 \le  \frac{F_{N,p}}{4^{1/\gamma}}  \le F_{N,p} \left( 1-\frac{\pi}{R_0} \right)^{1/\gamma}.
    \end{equation*}
    Thus, by Corollary \ref{cor:diameter_estimate}, $\X$ is compact and $\#I$ is finite. Hence, $\exists\,j_0\in I$ such that $T_{j_0}$ is a minimizer for $\alpha$ and for which Corollary \ref{cor:diameter_estimate} holds. In particular, this, together with \eqref{eq:local_to_global_deficit}, gives the following 
    \begin{equation*}
    \begin{aligned}
      {\rm diam}(\X) & \le  \pi  \left(1+3\pi\Bigg(R^2 \Bigg( \frac{\rho_p^k(T_{j_0},N-1)}{\mm(T_{j_0})}\Bigg)^{\frac{1}{p}}
    \Bigg)^{2\gamma} \right)\le \pi \Bigg(1+ \frac{(6\pi)^{2\gamma+1}}p \left( \frac{\rho_p^k(\X,N-1)}{\mm(\X)}  \right)^{2\gamma} \Bigg).
    \end{aligned}
    \end{equation*}
    Finally, recall that if $\overline N=2$, $\gamma$ can be chosen equal to $\frac1{10}$, cf.\ Remark \ref{rem:gamma ge 2}.
\end{proof}
\section{Cheng's comparison principle}
\subsection{Dirichlet eigenvalues in model intervals} 
Given $K \in \R$, $N \in \N$, $p\in(1,\infty)$ and $r \in \left( 0, \pi\sqrt{\frac{N-1}{K^+}}\right)$, we denote by $\lambda_p(K,N,r)$  the first non-zero Dirichlet eigenvalue for the $p$-Laplacian on a geodesic ball of radius $r$ in the model space of dimension $N$ and constant (sectional) curvature $K$. Equivalently, since eigenfunctions on geodesic balls of the model spaces are radial, we can compute $\lambda_p(K,N,r)$ by optimizing over one-dimensional $(K,N)$-model intervals. This allows for the generalization of the definition of $\lambda_p(K,N,r)$ for any $N\in (1,\infty)$. More precisely, we set
\begin{equation}
\lambda_p(K,N,r) := \inf \left\{ \frac{\int_0^r |\phi'|^p  h_{K,N}\,\d t}{\int_0^r|\phi|^ph_{K,N}\,\d t } \colon \phi \in {\sf AC}_{loc}(0,r) \text{ so that} \begin{array}{l}
    \int_0^r|\phi|^p h_{K,N}\,\d t <\infty  \\
   \lim_{t\to r^-}\phi(t)=0
\end{array}
\right\}.
\label{eq:p eigenvalue}
\end{equation}
A $p$-Dirichlet eigenfunction for $\lambda_p(K,N,r)$ is a solution $\phi \in {\sf AC}_{loc}(0,r)$ with $|\phi|,|\phi'|\in L^p(h_{K,N})$ of
\begin{equation}
   -\Delta_p \phi = \lambda_p(K,N,r)\phi|\phi|^{p-2}, \qquad \text{in the sense of distributions on }(0,r),
\label{eq:model p eigenfunction}
\end{equation}
where $\Delta_p \phi \in L^1_{loc}(0,r)$ is the $p$-Laplacian defined via integration by parts 
\[
-\int_0^r \Delta_p \phi g h_{K,N}\,\d t = \int_0^r |\phi'|^{p-2}\phi' g' h_{K,N} \, \d t,\qquad \forall\,g \in C^\infty_c(0,r).
\]
We collect basic properties of Dirichlet eigenfunctions on weighted intervals in Appendix \ref{apdx:eigenfunctions}. 
\subsection{Proof of Cheng's comparison principle}
We prove our last main result. Recall the variational definition of $\lambda_p(\Omega)$ for $\Omega\subset\X$ open, given in \eqref{eq:variational_eigenvalue}.
\begin{proof}[Proof of Theorem \ref{thm:cheng}]
    Set for brevity $B\coloneqq B_r(x)$ and consider the disintegration given by \cref{thm:localization} relative to the $1$-Lipschitz function $\sfd_x$. Then, we can write
    \begin{equation}
    \mm= \int \mm_q \, \d \mathfrak q,\label{eq:reintegration proof}
    \end{equation}
    and, for $\mathfrak q$-a.e.\ $q\in Q$, $\mm_q = g(q,\cdot)_\sharp(h_q \Leb^1)$, where $h_q:[0,r_q]\to [0,\infty)$ is a ${\sf CD}( k\circ g(q,\cdot) ,N)$ density on ${\rm Dom}(g(q,\cdot))$. Let us consider $\phi\in {\sf AC}_{loc}(0,r)$ to be a non-negative $p$-Dirichlet eigenfunction realizing $\lambda_p(K,N,r)$. We can suppose, by scaling, that $\phi(0)=1$ so that it holds $0\le\phi\le 1$ (recall that $\phi$ is non-increasing by Lemma \ref{lem:phi non-increasing}). The function $u \coloneqq \phi \circ \sfd_x$ is a competitor in the definition of $ \lambda_p(B)$ by the chain rule of Lemma \ref{lem:chain rule eigenfunction}, and we get
   \begin{equation} \label{eq:mwug_ineq}
        \int_B|u|^p\,\d\mm \cdot \lambda_p(B)  \le \int_B |Du|_p^p \,\d \mm  \overset{\eqref{eq:chain rule eigenf}}{\le} \int_B|\phi'|^p\circ \sfd_x \,\d \mm,
   \end{equation}
    where, here and after, we set $|\phi'| = 0$ on $\{t \in \R \colon \nexists |\phi'|(t)\}$. We set $\tilde r_q:=r_q\wedge r$ for every $q\in Q$ and we estimate the right-hand side using the localization as follows
    {\allowdisplaybreaks
    \begin{align*}
        \int_B |\phi'|^p\circ \sfd_x \,\d \mm &=\iint_0^{ \tilde r_q}|\phi'|^ph_q\, \d t \d \mathfrak q  = \iint_0^{ \tilde r_q} \phi' (|\phi'|^{p-2}\phi' h_q)\, \d t \d \mathfrak q  \\
        &= \int \left ( \phi |\phi'|^{p-2}\phi' h_q\Big|_0^{ \tilde r_q} - \int_0^{ \tilde r_q} \phi \Big( ( |\phi'|^{p-2}\phi' )'+ |\phi'|^{p-2}\phi' (\log h_q)'\Big) h_q\,\d t\right)\d \mathfrak q\\
        &\le  -\iint_0^{ \tilde r_q} \phi \Big( ( |\phi'|^{p-2}\phi' )'+ |\phi'|^{p-2}\phi' (\log h_q)'\Big) h_q\,\d t\d \mathfrak q \\
        &\overset{\eqref{eq:p ODE}}{=} \lambda_p(K,N,r)\iint_0^{ \tilde r_q}|\phi|^ph_q \,\d t\d \mathfrak q - \iint_0^{ \tilde r_q} \phi |\phi'|^{p-2}\phi'\big( (\log h_q)'- H_{K,N}\big) h_q \,\d t\d \mathfrak q \\
         &\le  \lambda_p(K,N,r)\int_B|u|^p\,\d \mm  - \iint_0^{ \tilde r_q}  |\phi'|^{p-2} \phi ' \psi_q h_q\,\d t\d \mathfrak q,
    \end{align*}}
    \!\!having repeatedly used that $\phi'\leq 0$ and that $|\phi'|^{p-2}\phi'(0^+)=0$. Combining everything, we get
    \begin{align}
    \label{eq:ineq_with_model}
         \Lambda\coloneqq\frac{\int_B|\phi'|^p\circ \sfd_x \,\d \mm}{\int_B |u|^p\,\d \mm}  \le \lambda_p(K,N,r) + \frac{\iint_0^{ \tilde r_q}  |\phi'|^{p-1} \psi_q h_q\,\d t\d \mathfrak q}{\int_B |u|^p \,\d \mm}.
    \end{align}
    Let us now handle the second term of the right-hand side. Let $\bar s =\bar s(K,N,r) >0$ be the first $s \in (0,r)$ such that $\phi(s) \ge 1/2$ for all $s \in (0,\bar s)$ (recall $\phi(0)=1$ and it is non-increasing), then
    \begin{equation*}
        \int_B |u|^p\,\d \mm \ge  \int_{B_{\bar s}(x)} |\phi|^p\circ \sfd_x\,\d \mm \ge \frac 1
{2^p}\mm(B_{\bar s}(x)).
    \end{equation*}
    Using the above and the H\"older inequality, we estimate
    \begin{equation}
    \begin{split}
    \label{eq:second_term}
        \frac{\iint_0^{ \tilde r_q}  |\phi'|^{p-1} \psi_q h_q\,\d t\d \mathfrak q}{\int_B |u|^p \,\d \mm} &\le \left( \frac{ \iint_0^ {\tilde r_q} \psi_q ^p h_q\,\d t\d \mathfrak q  }{\int_B |u|^p\,\d\mm } \right)^{\frac 1p} \left(\frac{ \iint_0^{ \tilde r_q}|\phi '|^p h_q\,\d t\d \mathfrak q }{\int_B |u|^p\,\d\mm} \right)^{\frac {p-1}p} \le 2\left( \frac{ \iint_0^{ \tilde r_q} \psi_q ^p h_q\,\d t\d \mathfrak q  }{\mm(B_{\bar s}(x))} \right)^{\frac 1p} \Lambda^{\frac {p-1}p}.
    \end{split}
    \end{equation}
    In addition, if $\varepsilon \coloneqq \varepsilon_{K,N,r,p_0, p} \le A_{K,N,\bar p,r}$ as given by \cref{prop:doubling uniform at x}, the assumption \eqref{eq:p cheng small deficit} guarantees
    \begin{equation}
        \frac{1}{\mm(B_{\bar s}(x))} \le 2\frac{v_{K,N}(r)}{v_{K,N}(\bar s)} \frac{1}{\mm(B_r(x))}.
    \label{eq:BG uniform eps}
    \end{equation}
    Thus, we continue from \eqref{eq:second_term} and obtain
    \begin{equation*}
        \frac{\iint_0^{ \tilde r_q}  |\phi'|^{p-1} \psi_q h_q\,\d t\d \mathfrak q}{\int_B |u|^p \,\d \mm} \le \frac{C_{K,N,r}}{\mm(B_r(x))^{\frac1p}}\left(  \iint_0^{ \tilde r_q} \psi_q ^p h_q\,\d t\d \mathfrak q  \right)^{\frac 1p} \Lambda^{\frac {p-1}p},
    \end{equation*}
    for a suitable constant $C_{K,N,r}>0$. Putting together the above inequality with \eqref{eq:ineq_with_model}, we deduce
    \begin{equation}
    \Lambda \le  \lambda_p(K,N,r) + \frac{ C_{K,N,r}}{\mm(B_r(x))^{\frac 1p }}\left(\iint_0^{\tilde r_q} \psi_q ^{p} h_q\,\d t\d  \mathfrak q  \right)^\frac 1p \Lambda^{\frac {p-1}p}.    \label{eq:main cheng estim}
    \end{equation}
    We can now estimate
    \begin{align*}
       &\mm(B_r(x))^{ \frac{1}{2\bar p-1}-\frac 1p} \left( \iint_0^{\tilde r_q} \psi_q ^{p} h_q\,\d t\d \mathfrak q\right)^{\frac 1p}\le   \left( \iint_0^{\tilde r_q} \psi_q ^{2\bar p-1} h_q\,\d t\d \mathfrak q \right)^{\frac 1{2\bar p-1}}\\
        &\qquad = \left( \iint_0^{ \frac{\pi}{2}\sqrt{\frac{N-1}{K^+}}\wedge \tilde r_q} \psi_q ^{2\bar p-1} h_q\,\d t\d \mathfrak q  + \iint_{ \frac{\pi}{2}\sqrt{\frac{N-1}{K^+}}\wedge \tilde r_q}^{\tilde r_q} \left( \frac{h_{K^+,N}}{h_{K^+,N}}\right)^{\frac{4\bar p-N-1}{N-1}}\psi_q^{2\bar p-1} h_q\,\d t\d \mathfrak q  \right)^{\frac 1{2\bar p-1}}\\
        &\qquad \le \left( \iint_0^{ \frac{\pi}{2}\sqrt{\frac{N-1}{K^+}}\wedge \tilde r_q} \psi_q ^{2\bar p-1} h_q\,\d t\d \mathfrak q  \right)^{\frac 1{2\bar p-1}}\\
        &\qquad\,\qquad\qquad\qquad\qquad+ h_{K^+,N}(r)^{\frac{N+1-4\bar p}{(N-1)(2\bar p-1)}}\left( \iint_{ \frac{\pi}{2}\sqrt{\frac{N-1}{K^+}}\wedge \tilde r_q}^{\tilde r_q} h_{K^+,N}^{\frac{4\bar p-N-1}{N-1}}\psi_q^{2\bar p-1} h_q\,\d t\d \mathfrak q  \right)^{\frac 1{2\bar p-1}},
    \end{align*}
    where, for the first inequality, we used H\"older, and for the second one, the subbaditivity of the concave function $t^{\frac1{2\bar p-1}}$ and the monotonicity of $h_{K^+,N}$ on $\left(  \frac{\pi}{2}\sqrt{\frac{N-1}{K^+}},r\right)$, with the usual convention, here and after, that any term containing $K^+$ is not present if $K\le 0$.  We can thus invoke \cref{prop:mean vs deficit 1D} (which we can apply by Lemma \ref{lem:average integral tend zero}) and, by the disintegration formula \eqref{eq:reintegration proof}, we get
    \begin{align*} 
     \left( \iint_0^{\tilde r_q} \psi_q ^{p} h_q\,\d t\d \mathfrak q\right)^{\frac 1p} &\le \alpha_{N,\bar p}\left( 1 +  h_{K^+,N}(r)^{\frac{N+1-4\bar p}{(N-1)(2\bar p-1)}} \right) r^{\frac{1}{2\bar p-1}}\mm(B_r(x))^{\frac 1p} \left( \frac{\rho_{\bar p}^k(B_r(x),K)}{\mm(B_r(x)}\right)^{\frac{1}{2\bar p-1}}.
    \end{align*}
   Thanks to the above, we can continue estimating in \eqref{eq:main cheng estim} to deduce
    \begin{equation}\label{eq:cheng intermediate}
        \Lambda \le  \lambda_p(K,N,r) + C_{K,N,r, p,p_0}  \left( \frac{\rho_{\bar p}^k(B_r(x),K)}{\mm(B_r(x))}\right)^{\frac{1}{2\bar p-1}}\Lambda^{\frac {p-1}p} ,
    \end{equation}
    for a suitable constant $C_{K,N,r, p,p_0} >0$. In particular, we get
    \[
        \lambda_p(B_r(x)) \overset{\eqref{eq:mwug_ineq},\eqref{eq:cheng intermediate}}{\le}   \lambda_p(K,N,r) +  C_{K,N,r, p,p_0}  \left( \frac{\rho_{\bar p}^k(B_r(x),K)}{\mm(B_r(x)}\right)^{\frac{1}{2\bar p-1}}\Lambda^{\frac {p-1}p}.
    \]   
    From here, we see that the proof will be concluded if we show that $\Lambda$ is bounded above by a positive constant depending only on $K,N,r, p,p_0$. We claim that this is the case. Indeed, using Young inequality  $ab \le a^p/p + b^{p'}/{p'}$ in  \eqref{eq:cheng intermediate} for $p' = p/(p-1)$, we get after manipulations
    \[
       \Lambda \le p \lambda_p(K,N,r) +C_{K,N,r, p,p_0}\left(\frac{ \rho^k_{\bar p}(B_r(x),K)}{\mm(B_r(x))}\right)^\frac{p}{2\bar p-1},
    \]
     for a suitable constant $C_{K,N,r, p,p_0}>0$. Finally, possibly decreasing $\eps$ so that $\frac{ \rho^k_{\bar p}(B_r(x),K)}{\mm(B_r(x))} \le 1$, we have that $\Lambda$ is bounded above by a constant depending only on $K,N,r, p,p_0$, concluding the proof.
\end{proof}
\appendix
\section{One-dimensional \texorpdfstring{${\sf CD}$}{CD} densities}\label{apdx:1D density}
In this appendix we study some useful estimates and regularization properties of one-dimensional densities admitting a variable Ricci curvature lower bound. We shall always consider closed intervals $[a,b]$ with $-\infty\le a<b\le +\infty$, with the standard convention if $a,b$ are not finite. 
\begin{definition}\label{def:1D CD}
    Let $h \colon [a,b]\rightarrow [0,+\infty)$ be a function, let $\kappa \colon [a,b] \to \R$ be admissible and $N>1$. We say that $h$ is a ${\sf CD}(\kappa,N)$ density on $[a,b]$, provided for every $x_0,x_1\in [a,b]$ and $t \in [0,1]$ it holds
    \begin{equation}
        h(tx_1 + (1-t)x_0)^{\frac{1}{N-1}} \ge \sigma^{(1-t)}_{k^-_{\gamma},N-1}(|x_1-x_0|) h(x_0)^{\frac1{N-1}} + \sigma_{k^+_{\gamma},N-1}^{(t)}(|x_1-x_0|) h(x_1)^{\frac{1}{N-1}},
    \label{eq:curvature dimension inequality}
    \end{equation}
    where $\gamma_t = (1-t)x_0+tx_1$. 
\end{definition}
The above estimate is a curvature dimension inequality that will naturally appear when analyzing the regularity of the disintegration in the proof of Theorem \ref{thm:localization}. Next, we reconcile this notion with usual curvature dimension conditions as defined in Definition \ref{def:CDkN} (compare to \cite[Lemma A.2]{CavallettiMilman21}).
\begin{lemma}\label{lem:cd density implica cd space}
    Let  $\kappa \colon [a,b] \to \R$ be admissible and fix $N>1$. Let $h \in L^1(a,b)$ be a non-negative function. The following are equivalent:
    \begin{itemize}
        \item[{\rm i)}] there is a representative of $h$ that is a ${\sf CD}(\kappa,N)$ density on $[a,b]$;
        \item[{\rm ii)}] $([a,b],|\cdot|,h \Leb^1)$ is a  ${\sf CD}(\kappa,N)$ space as in Definition \ref{def:CDkN};
        \item[{\rm iii)}] there is a locally Lipschitz continuous representative of $h$ satisfying 
        \[
            (\log h)'' + \frac{|(\log h)'|^2}{N-1} \le - \kappa\qquad\text{in the sense of distributions on } (a,b).
         \]
    \end{itemize}
    In particular, if any of the above holds, then $\log h$ is locally semi-concave and $h$ is locally Lipschitz.
\end{lemma}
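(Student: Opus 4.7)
The plan is to follow the classical template for constant curvature (as in \cite[Lemma A.2]{CavallettiMilman21}) and then extend to admissible $\kappa$ by monotone approximation via $\kappa_n \uparrow \kappa$ from \eqref{eq:k_n_continuous}.

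First I would establish (i) $\Leftrightarrow$ (ii). Since $[a,b]$ is one-dimensional and $h\Leb^1$ is absolutely continuous, any optimal transport plan between absolutely continuous marginals is uniquely induced by the monotone rearrangement; in particular, the space is essentially non-branching, so \cref{prop:cd_curvewise} reduces (ii) to a pointwise curvewise inequality for the densities of $\mu_0, \mu_1$ with respect to $h\Leb^1$ along the geodesic $\gamma_t = (1-t)x_0 + tx_1$. Choosing bump-approximations of Dirac masses at $x_0, x_1$ and rewriting the densities with respect to $\Leb^1$, this inequality becomes exactly \eqref{eq:curvature dimension inequality}.

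Next I would prove (i) $\Leftrightarrow$ (iii) for \emph{continuous} $\kappa$. For (i) $\Rightarrow$ (iii): applying \eqref{eq:curvature dimension inequality} with $t = 1/2$ to nearby pairs $x_0, x_1$ and using the Taylor expansion
\[
\sigma^{(1/2)}_{\kappa/(N-1)}(\theta) = \tfrac{1}{2} - \tfrac{\kappa(\gamma_{1/2})\theta^2}{16(N-1)} + o(\theta^2),
\]
one deduces local semiconcavity of $h^{1/(N-1)}$ with modulus controlled by the local infimum of $\kappa$; this yields local Lipschitzness of $h$ on the interior of $\{h>0\}$, and testing \eqref{eq:curvature dimension inequality} against a non-negative $\varphi \in C_c^\infty(a,b)$ in the limit $\theta \downarrow 0$ produces the distributional Riccati inequality. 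For (iii) $\Rightarrow$ (i), fix $x_0, x_1$ and set $\theta = |x_1 - x_0|$ and $y(t) := h(\gamma_t)^{1/(N-1)}$; a direct chain-rule computation shows that the Riccati inequality translates into
\[
y''(t) + \tfrac{\theta^2}{N-1}\kappa(\gamma_t)\, y(t) \le 0 \quad \text{distributionally on } (0,1),
\]
and a Sturm-type comparison with the Jacobi equation \eqref{klebeband} --- whose solutions with boundary data $u(0) = y(0)$, $u(1) = y(1)$ generate the distortion coefficients $\sigma^{(1-t)}_{\kappa_\gamma/(N-1)}(\theta)$ and $\sigma^{(t)}_{\kappa_\gamma/(N-1)}(\theta)$ --- yields \eqref{eq:curvature dimension inequality}.

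Finally, I would extend both equivalences to general admissible $\kappa$ by combining the monotone approximation $\kappa_n \uparrow \kappa$ from \eqref{eq:k_n_continuous} with the monotone convergence $\sigma_{\kappa_n}^{(t)}(\theta) \uparrow \sigma_{\kappa}^{(t)}(\theta)$ built into \eqref{eq:sigma monotone def}. Each condition passes to the limit directly: the distributional inequality of (iii) because $\kappa_n \to \kappa$ in $L^1_{\loc}$ by dominated convergence (using local boundedness of $\kappa$), and (i)--(ii) because the right-hand side of the defining inequality is monotone non-decreasing in $n$. The main obstacle will be the behavior of $h$ at points where it vanishes: the Lipschitz and semiconcavity claims require showing from (i) that $\{h>0\}$ is a subinterval of $(a,b)$ on whose complement $h \equiv 0$, so that the regularity and distributional statements in (iii) are meaningful on the connected component where $h$ is strictly positive and trivial on its complement.
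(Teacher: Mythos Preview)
Your proposal is correct and follows essentially the same route as the paper. The paper's proof is terse: for (i) $\Leftrightarrow$ (ii) it invokes \cite[Thm.\ A.2]{CavallettiMilman21} together with \cite[Prop.\ 7.2]{Ket17}, and for (i) $\Leftrightarrow$ (iii) it cites \cite[Eq.\ (A.1)]{CavallettiMilman21} and \cite[Cor.\ 3.13]{Ket17}; the regularity claim is obtained by observing that the local lower bound $\kappa \ge K_0$ on compact subintervals reduces matters to the constant-curvature case. Your outline unpacks these citations --- the bump-approximation for (i) $\Leftrightarrow$ (ii), the Taylor expansion and Sturm comparison for (i) $\Leftrightarrow$ (iii), and the monotone passage $\kappa_n \uparrow \kappa$ --- which is exactly what those references contain.
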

\begin{proof}
    The equivalence between items i) and ii) can be proven following the same strategy of \cite[Theorem A.2]{CavallettiMilman21}, taking into account \cite[Prop.\ 7.2]{Ket17}. Under either one of the assumptions, since $\kappa$ is locally bounded from below, for every compact interval $I\subset [a,b]$, $(I,|\cdot|,h \Leb^1)$ is ${\sf CD}(K_0,N)$ for some $K_0 \in \R$. This implies that there exists a representative so that $\log h$ is semiconcave and $h$ is locally Lipschitz.
    The equivalence between i) and iii) follows by showing \cite[Eq. (A.1)]{CavallettiMilman21} in the sense of distributions. This holds as all the terms appearing in the calculus rules are functions. The conclusion follow by \cite[Cor.\ 3.13]{Ket17}.  
\end{proof}
We next study regularization properties of ${\sf CD}$ densities (cf.\ \cite[Prop.\ A.10]{CavallettiMilman21}).
\begin{proposition}\label{prop:convolution 1D density}
    Let $\kappa \colon [a,b]\to \R$ be admissible with $|a|,|b|<\infty$, $N>1$ and let $h \colon [a,b]\to[0,+\infty)$ be a ${\sf CD}(\kappa,N)$ density on $(a,b)$. Consider, for every $\eps \in \left(0, \frac{b-a}{2}\right)$, a smooth mollifier $\eta_\eps$ supported on $[-\eps,\eps]$ with $\int \eta_\eps =1$ defined as $\eta_\eps\coloneqq \eps^{-1}\eta(\cdot /\eps)$ for some $\eta \in C_c^\infty(-1,1)$. Define
    \[
        h_\eps \coloneqq \exp\left( (\log h)\ast \eta_\eps\right),\qquad
        \kappa_\eps \coloneqq \kappa \ast \eta_\eps, \qquad
        I_\eps \coloneqq (a+\eps,b-\eps).
    \]
    Then, $h_\eps$ is a smooth ${\sf CD}(\kappa_\eps,N)$ density on $I_\eps$. Furthermore, it holds:
    \begin{itemize}
        \item[{\rm i)}] $h_\eps \to h$ locally uniformly on $(a,b)$ as $\eps\downarrow 0$;
        \item[{\rm ii)}] $(\log h_\eps)' \to (\log h)' $ a.e.\ on $(a,b)$ as $\eps\downarrow 0$;
        \item[{\rm iii)}] if also $\limi_{r\downarrow a}\dashint_a^r h(r)\,\d r =0$, then we have $\limi_{\eps\downarrow 0}h_\eps(a+\eps)=0$.
    \end{itemize}
\end{proposition}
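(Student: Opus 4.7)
The plan is to leverage Lemma \ref{lem:cd density implica cd space} throughout, which lets me swap the CD density condition for the distributional differential inequality $(\log h)'' + |(\log h)'|^2/(N-1) \leq -\kappa$. I may assume $h>0$ on $(a,b)$ (otherwise I restrict to the interior of the support), so that $\log h$ is locally Lipschitz and the mollification $(\log h) \ast \eta_\eps$ is smooth on $I_\eps$, making $h_\eps = \exp((\log h)\ast\eta_\eps)$ smooth and positive on $I_\eps$.

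First I would prove that $h_\eps$ is a ${\sf CD}(\kappa_\eps,N)$ density on $I_\eps$. Convolving the distributional inequality for $\log h$ against the non-negative test function $\eta_\eps(x-\cdot)$ gives, pointwise on $I_\eps$,
\[
(\log h_\eps)''(x) \leq -\kappa_\eps(x) - \frac{(|(\log h)'|^2 \ast \eta_\eps)(x)}{N-1}.
\]
Jensen's inequality applied to the probability measure $\eta_\eps(y)\,\d y$ and the convex function $t \mapsto t^2$ then yields $((\log h_\eps)')^2 = ((\log h)' \ast \eta_\eps)^2 \leq |(\log h)'|^2 \ast \eta_\eps$, and combining the two bounds gives $(\log h_\eps)'' + ((\log h_\eps)')^2/(N-1) \leq -\kappa_\eps$. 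A final application of Lemma \ref{lem:cd density implica cd space} then closes this step.

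Items (i) and (ii) are standard consequences of the regularity of $\log h$: on any compact $K \subset (a,b)$, $\log h$ is continuous and bounded, so $(\log h) \ast \eta_\eps \to \log h$ uniformly on $K$ and (i) follows by continuity of $\exp$; for (ii), $(\log h)' \in L^1_{\loc}(a,b)$, so the Lebesgue differentiation theorem gives $(\log h)' \ast \eta_\eps \to (\log h)'$ at every Lebesgue point, which is almost every point.

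The main obstacle is (iii), since it concerns the behavior of $h_\eps$ at the moving boundary point $a+\eps$ where pointwise convergence is not directly available. The key idea is to exploit the concavity of $\log$: Jensen's inequality for the probability measure $\eta_\eps(y)\,\d y$ reverses into $\log h_\eps(a+\eps) = ((\log h) \ast \eta_\eps)(a+\eps) \leq \log ((h \ast \eta_\eps)(a+\eps))$, that is $h_\eps(a+\eps) \leq (h \ast \eta_\eps)(a+\eps)$. The right-hand side is then crudely bounded by an integral average,
\[
(h \ast \eta_\eps)(a+\eps) \leq \|\eta_\eps\|_\infty \int_a^{a+2\eps} h(s)\,\d s \leq 2\|\eta\|_\infty \dashint_a^{a+2\eps} h(s)\,\d s,
\]
whose $\liminf$ as $\eps \downarrow 0$ vanishes by hypothesis, establishing (iii).
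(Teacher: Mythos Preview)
Your proof is correct and follows essentially the same route as the paper: both arguments convolve the distributional inequality from Lemma~\ref{lem:cd density implica cd space}, apply Jensen's inequality to control $((\log h)'\ast\eta_\eps)^2$ by $|(\log h)'|^2\ast\eta_\eps$, and handle item (iii) by the Jensen bound $h_\eps(a+\eps)\le (h\ast\eta_\eps)(a+\eps)$ together with $\|\eta_\eps\|_\infty\le \|\eta\|_\infty/\eps$. The only cosmetic difference is the order in which the Jensen step and the convolution step are presented.
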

\begin{proof}
   First, observe that by Jensen inequality (applied to $\eta_\eps \mathcal L^1$) we deduce 
     \begin{equation}
     \label{eq:Jensen_CDkn_density}
         \left[(\log h_\eps)'(t)\right]^2 = \left[((\log h)' \ast \eta_\eps)(t)\right]^2 \le (\left[(\log h)'\right]^2 \ast \eta_\eps)(t)\qquad\text{for a.e. }t \in I_\eps.
     \end{equation}
     Recall now that, thanks to Lemma \ref{lem:cd density implica cd space}, we have
     \begin{equation*}
         (\log h)'' + \frac{[(\log h)']^2}{N-1} \le - \kappa \qquad\text{in the sense of distributions on } I_\eps
     \end{equation*}
     After taking the convolution for $\eps>0$ on both sides and using \eqref{eq:Jensen_CDkn_density}, we get
     \begin{equation*}
         (\log h_\eps)''(t) + \frac{[(\log h_\eps)'(t)]^2}{N-1} \le - \kappa \ast \eta_\eps(t),\qquad\text{a.e. on }t \in I_\eps
     \end{equation*}
     In particular, $h_\eps$ is a smooth ${\sf CD}(\kappa\ast \eta_\eps, N)$ density on $I_\eps$. 

     Conclusion i) simply follows from the assumed continuity properties of $h,\kappa$. Conclusion ii) follows similarly since $\log h$ is locally Lipschitz. Finally, conclusion iii) instead follows estimating 
     \begin{equation}
    \label{eq:estimate_heps}
        0 \le h_\eps(a+\eps) \le \exp \left( \int \log h(t)\eta_\eps(a+\eps-t)\,\d t \right) \le \int h(t) \eta_\eps (a+\eps - t)\,\d t,
    \end{equation}
    where, in the last inequality, we used Jensen's inequality applied to the probability measure $\eta_\eps(\eps - \cdot) \mathcal{L}^1$.  Since $\eta \le L$ for every $L >0$, we have  by definition $\eta_\eps \le L \eps^{-1}$. Hence, $\int h(t) \eta_\eps (a+\eps - t)\,\d t \le L\eps^{-1}\int_{a}^{a+2\eps} h(t)\,\d t$ that, combined with \eqref{eq:estimate_heps} and the assumption on $h$, gives $\limi_{\eps\downarrow 0} h_\eps(a+\eps) =0$.
\end{proof}
\section{Dirichlet \texorpdfstring{$p$}{p}-eigenfunctions on weighted intervals}\label{apdx:eigenfunctions}
We collect here basic properties of $p$-Dirichlet eigenfunctions on $(K,N)$-model intervals. These are well known, but we include them to be self-consistent. 
\begin{lemma} \label{lem:regularity phi}
    Let $K \in \R$ and $N >1$ and $r \in \left(0,\pi\sqrt{\frac{N-1}{K^+}}\right)$. There is a non-negative minimizer $\phi \in {\sf AC}_{loc}(0,r) $ of  \eqref{eq:p eigenvalue}. Furthermore, we have $ |\phi'|^{p-2}\phi'  \in W^{1,1}_{loc}(0,r)$ and it holds
    \begin{equation}
    \left(|\phi'|^{p-2}\phi'\right)'+|\phi'|^{p-2}\phi' (\log h_{K,N})'=-\lambda_p(K,N,r)  |\phi|^{p-2}\phi, \qquad \text{a.e.\ in $(0,r).$}
    \label{eq:p ODE}
    \end{equation}
\end{lemma}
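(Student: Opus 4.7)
The proof is by the direct method of the calculus of variations, followed by standard ODE regularity for the Euler--Lagrange equation. Since $r<\pi\sqrt{(N-1)/K^+}$, the weight $h_{K,N}$ is smooth on $[0,r]$, bounded above, strictly positive on $(0,r]$, and degenerates near $0$ only like $t^{N-1}$.

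\textbf{Existence.} I would take a minimizing sequence $(\phi_n)\subset{\sf AC}_{loc}(0,r)$ for \eqref{eq:p eigenvalue} normalized by $\int_0^r|\phi_n|^p h_{K,N}\,\d t=1$, and replace $\phi_n$ by $|\phi_n|\geq 0$, which preserves the Rayleigh quotient by the chain rule for the absolute value. On every compact subinterval $[a,b]\subset(0,r]$, $h_{K,N}$ is uniformly positive, so $(\phi_n)$ is bounded in unweighted $W^{1,p}(a,b)$; Morrey's embedding together with a diagonal procedure yields a subsequence converging locally uniformly on $(0,r]$ to a non-negative $\phi\in{\sf AC}_{loc}(0,r)$, with $\phi_n'\rightharpoonup\phi'$ weakly in $L^p_{loc}((0,r];h_{K,N}\Leb^1)$. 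Since $h_{K,N}$ is uniformly positive on $[r/2,r]$, the convergence extends up to $t=r$, so the Dirichlet condition $\lim_{t\to r^-}\phi(t)=0$ passes to the limit. Weak $L^p$-lower semicontinuity of the weighted Dirichlet energy then controls the numerator from below.

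\textbf{Main obstacle.} The subtle point is to ensure that $\phi$ is nontrivial, i.e.\ that no $L^p$-mass is lost at $t=0$ in the limit, where the degeneracy of $h_{K,N}$ prevents a direct Rellich compactness on $[0,r]$. I would handle this by a uniform integrability argument for the minimizing sequence near $0$: combining the bound on the weighted Dirichlet energy with the explicit decay $h_{K,N}(t)\sim t^{N-1}$, a weighted Hardy-type estimate yields $\int_0^\eta|\phi_n|^p h_{K,N}\,\d t\to 0$ uniformly in $n$ as $\eta\downarrow 0$. Consequently $\int_0^r|\phi|^p h_{K,N}\,\d t=1$ and $\phi$ is a nontrivial minimizer.

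\textbf{Euler--Lagrange equation and regularity.} Inserting perturbations $\phi+\epsilon g$ with $g\in C_c^\infty(0,r)$ and differentiating the Rayleigh quotient at $\epsilon=0$ yields the distributional identity
\begin{equation*}
(|\phi'|^{p-2}\phi'\,h_{K,N})'=-\lambda_p(K,N,r)\,|\phi|^{p-2}\phi\,h_{K,N}\qquad\text{on }(0,r).
\end{equation*}
The right-hand side belongs to $L^1_{loc}(0,r)$ because $\phi$ is locally bounded on $(0,r)$ and $h_{K,N}$ is smooth, so $|\phi'|^{p-2}\phi'\,h_{K,N}\in W^{1,1}_{loc}(0,r)$. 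Dividing by the smooth, strictly positive $h_{K,N}$ yields $|\phi'|^{p-2}\phi'\in W^{1,1}_{loc}(0,r)$, and the classical product rule applied a.e.\ gives \eqref{eq:p ODE}.
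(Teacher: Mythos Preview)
Your proposal is correct and follows essentially the same approach as the paper: direct method for existence, invariance under $|\cdot|$ for non-negativity, first variation to obtain the weak Euler--Lagrange equation, and then the bootstrap $|\phi'|^{p-2}\phi'\,h_{K,N}\in W^{1,1}_{loc}(0,r)$ followed by division by the locally positive smooth weight to conclude $|\phi'|^{p-2}\phi'\in W^{1,1}_{loc}(0,r)$ and \eqref{eq:p ODE}. The paper is considerably terser on the existence step (it simply calls it a ``standard compactness argument'' without discussing the potential mass loss at $t=0$), so your explicit identification of that issue and the weighted Hardy/uniform-integrability remedy is a genuine expansion of what the paper writes, but not a different strategy.
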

\begin{proof}
    The existence of a minimizer $\phi$ follows by a standard compactness argument. The fact that this can be taken non-negative follows instead by the invariance of the optimization problem \eqref{eq:p eigenvalue} for taking the absolute value. The fact that $\phi$ satisfies \eqref{eq:model p eigenfunction} in a distributional sense is also a standard variational computation. By definition of $\Delta_p \phi$, we have for all $g \in C^\infty_c(0,r)$ that
    \begin{align*}
       - \int_0^r \Delta_p \phi g h_{K,N} \,\d t  &= - |\phi'|^{p-2}\phi' g h_{K,N}\Big|_0^r + \int_0^r (|\phi'|^{p-2}\phi') g' h_{K,N} \, \d t  + \int_0^r |\phi'|^{p-2}\phi' g (\log h_{K,N})'h_{K,N} \, \d t 
    \end{align*}
    Since $g$ is of compact support  on $(0,r)$ we get $- |\phi'|^{p-2}\phi' g h_{K,N}\Big|_0^r =0$. Hence, by the Euler-Lagrange equations \eqref{eq:model p eigenfunction}, we deduce, for all  $g \in C^\infty_c(0,r)$,
    \[
        \lambda_p(K,N,r)\int_0^r \phi|\phi|^{p-2}g h_{K,N}\,\d t =  \int_0^r \big( |\phi'|^{p-2}\phi' g' h_{K,N} + |\phi'|^{p-2}\phi' g (\log h_{K,N})' \big) h_{K,N} \, \d t.
    \]
    Since  $w \coloneqq |\phi'|^{p-2}\phi' h_{K,N} \in L^1_{loc}(0,r)$ and $ \phi|\phi|^{p-2 }h_{K,N}$ as well as that $|\phi'|^{p-2}\phi' (\log h_{K,N})' h_{K,N} \in L^1_{loc}(0,r) $, we thus deduce that $w \in W^{1,1}_{loc}(0,r)$. However, $h_{K,N} \in C^\infty(0,r)$ and $h_{K,N},h_{K,N}^{-1},(\log\,h_{K,N})'\in L^\infty_{loc}(0,r)$, hence a chain rule argument gives also $|\phi'|^{p-2}\phi'\in W^{1,1}_{loc}(0,r)  $ and \eqref{eq:p ODE} follows. 
\end{proof}
   Knowing that $\phi$ satisfies \eqref{eq:p ODE} gives access to standard elliptic regularity theory to deduce that $\phi$ is also Lipschitz continuous. The important fact is that $h_{K,N}>0$ on $(0,r]$, and it is smooth on $[0,r]$, and vanishes at zero. For instance, this can be deduced by showing that $\phi$ is sufficiently integrable by a Moser iteration argument and then recasting \eqref{eq:p ODE} as a degenerate non-homogeneous elliptic equation. The Lipschitz regularity then follows by appealing to \cite[Thm.\ 2]{DiBenedetto83} (and subsequent remark there). We are now going to show that then $\phi$ is also non-increasing (if non-negative).
\begin{lemma}\label{lem:phi non-increasing}
     Let $K \in \R$ and $N >1$ and  $r \in \left(0,\pi\sqrt{\frac{N-1}{K^+}}\right)$. Let $\phi \in {\sf AC}_{loc}(0,r) $ be a non-negative minimizer of \eqref{eq:p eigenvalue}. Then, there exists $\lim_{t\downarrow 0} ( |\phi'|^{p-2}\phi')(t) =0$ and $\phi$ is non-increasing.
\end{lemma}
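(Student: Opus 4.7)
The plan is to recast the Euler--Lagrange equation \eqref{eq:p ODE} in self-adjoint form, exploit the sign of the resulting source term, and combine this monotonicity with the a priori Lipschitz regularity of $\phi$ recalled in the remark following Lemma~\ref{lem:regularity phi}. Concretely, multiplying \eqref{eq:p ODE} by $h_{K,N}$ and recognising the left-hand side as a total derivative, the ODE can be rewritten as
\[
\big(h_{K,N}(t)\,|\phi'|^{p-2}\phi'(t)\big)' \;=\; -\lambda_p(K,N,r)\,h_{K,N}(t)\,\phi(t)^{p-1},\qquad\text{a.e.\ on }(0,r).
\]
Setting $w(t)\coloneqq h_{K,N}(t)|\phi'|^{p-2}\phi'(t)$ and using $\phi\ge 0$ and $\lambda_p(K,N,r)>0$, the right-hand side is non-positive, so $w$ is non-increasing on $(0,r)$.

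Next, by the remark following Lemma~\ref{lem:regularity phi}, the function $\phi$ is Lipschitz on $[0,r]$, so $|\phi'|$ is essentially bounded. Since $h_{K,N}(t)\to 0$ as $t\downarrow 0$, this yields $\lim_{t\downarrow 0}w(t)=0$. Combined with the monotonicity of $w$, it follows that $w(t)\le 0$ for every $t\in(0,r)$, and as $h_{K,N}>0$ on $(0,r)$ this is equivalent to $\phi'(t)\le 0$ a.e., so $\phi$ is non-increasing. To extract the limit of $|\phi'|^{p-2}\phi'$ at $0$, I would then integrate the self-adjoint identity from $0$ to $t$ using $\lim_{s\downarrow 0}w(s)=0$ to obtain
\[
|\phi'|^{p-2}\phi'(t) \;=\; -\frac{\lambda_p(K,N,r)}{h_{K,N}(t)}\int_0^t h_{K,N}(s)\,\phi(s)^{p-1}\,\d s.
\]
Since $h_{K,N}(s)=\sin_{K/(N-1)}^{N-1}(s)\sim s^{N-1}$ as $s\downarrow 0$ and $\phi$ is continuous at $0$, the numerator is of order $t^{N}$ while the denominator is of order $t^{N-1}$, so the ratio is $O(t)$ and tends to $0$.

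The principal obstacle is step two, the vanishing $\lim_{t\downarrow 0}w(t)=0$: although morally unavoidable given that the weight degenerates at the origin, it relies essentially on the Lipschitz regularity of $\phi$, which is itself a nontrivial input obtained by recasting \eqref{eq:p ODE} as a degenerate non-homogeneous elliptic equation and invoking the regularity theory of \cite{DiBenedetto83}. Once this boundary information is in hand, the remaining steps are essentially algebraic manipulations of the self-adjoint identity.
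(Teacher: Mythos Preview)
Your proof is correct and follows essentially the same approach as the paper: both arguments rewrite \eqref{eq:p ODE} as $(h_{K,N}|\phi'|^{p-2}\phi')'=-\lambda_p(K,N,r)h_{K,N}\phi^{p-1}$, use the Lipschitz regularity of $\phi$ (from the remark after Lemma~\ref{lem:regularity phi}) together with $h_{K,N}(0)=0$ to get the vanishing boundary value of $w$ at the origin, and conclude $\phi'\le 0$ from the sign of the source term. The only cosmetic difference is that you phrase the non-positivity of $w$ via its monotonicity, and you read off the limit of $|\phi'|^{p-2}\phi'$ from the asymptotics $h_{K,N}(s)\sim s^{N-1}$, whereas the paper obtains the same integral identity and then applies L'H\^opital's rule.
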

\begin{proof}
    Thanks to the previous lemma, we know that $w=h_{K,N} |\phi'|^{p-2}\phi' \in W^{1,1}_{loc}(0,r)$ and 
    \[
    - (h_{K,N} |\phi'|^{p-2}\phi')' = \lambda_p(K,N,r) \phi|\phi|^{p-2}h_{K,N},\qquad \text{a.e.\ on }(0,r).
    \]
    For any $t <r$ we can thus integrate the above from zero to $t$ using that $h_{K,N}(0)=0$ to obtain
    \[
  - h_{K,N} (t)\left(|\phi'|^{p-2}\phi'\right)(t)  = - \int_0^t  ( h_{K,N} |\phi'|^{p-2}\phi')'\,\d s = \lambda_p(K,N,r) \int_0^t \phi|\phi|^{p-2}h_{K,N}\,\d s \ge 0,
    \]
   having used, for the latter, the assumption that the $p$-Dirichlet eigenfunction $\phi$ is non-negative and that $h_{K,N}>0$ on $(0,r)$. This implies that $\phi'\le 0$ a.e.\ on $(0,r)$, hence it is non-increasing.  To conclude the proof, it is enough to observe that
   \begin{align*}
        0 \le - \limi_{t\downarrow 0 } |\phi'|^{p-2}\phi'(t) \le - \lims_{t\downarrow 0 } |\phi'(t)|^{p-2}\phi'(t) &=\lims_{t\downarrow 0 } \frac{\lambda_p(K,N,r) \int_0^t \phi|\phi|^{p-2}h_{K,N}\,\d s}{ h_{K,N} (t)} \\
        &= \lambda_p(K,N,r) \lims_{t\downarrow 0 } \frac{\phi(t)|\phi|^{p-2}(t)h_{K,N}(t)}{h_{K,N}'(t)} =0,
   \end{align*}
   having used the De l'H\^opital's rule and then that $\lim_{t\downarrow 0}\frac{h_{K,N}(t)}{h_{K,N}'(t)} =0$. 
\end{proof}
We conclude with the following chain rule technical lemma. 
\begin{lemma}\label{lem:chain rule eigenfunction}
    Let $K\in\R,N >1, p \in (1,\infty)$. Let $(\X,\sfd,\mm)$ be a metric measure space, let $B\subset \X$ be a ball of radius $r \in \left(0,\pi\sqrt{\frac{N-1}{K^+}}\right)$ centred at $x \in \X$.  Let $\phi$ be a non-negative minimizer of \eqref{eq:p eigenvalue}. Then, $\phi \circ \sfd_x \in W^{1,p}_0(B)$ and it holds
    \begin{equation}
    |D(\phi\circ\sfd_x)|_p \le |\phi'|\circ\sfd_x,\quad \mm\text{-a.e.\ on }B.\label{eq:chain rule eigenf}
    \end{equation}
    where $|\phi'|\circ \sfd_x $ is arbitrarily non-negatively defined on $\sfd_x^{-1}( \{t\in  \R \colon \nexists |\phi'|(t) \})$.
\end{lemma}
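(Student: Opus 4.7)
The strategy is to approximate $u \coloneqq \phi\circ\sfd_x$ by boundedly supported Lipschitz functions on $B$, apply the pointwise chain rule to each approximation, and pass to the limit in $W^{1,p}(B)$. First, I argue that $\phi$ extends to a globally Lipschitz function on $[0,r]$. By Lemma~\ref{lem:regularity phi} and the subsequent regularity discussion, $\phi$ is Lipschitz on every $[c,r]$ with $c\in(0,r)$; by Lemma~\ref{lem:phi non-increasing}, $\phi$ is non-increasing with $\phi(r)=0$ and $\lim_{t\downarrow 0}(|\phi'|^{p-2}\phi')(t)=0$. Multiplying the ODE~\eqref{eq:p ODE} by $h_{K,N}$ and integrating from $0$ to $t$ (using $h_{K,N}(0)=0$) yields
\[
h_{K,N}(t)|\phi'(t)|^{p-2}\phi'(t) = -\lambda_p(K,N,r) \int_0^t \phi|\phi|^{p-2}h_{K,N}\,\d s,
\]
which, combined with the asymptotics $h_{K,N}(t)\sim t^{N-1}$ near $0$ and boundedness of $\phi$, gives $|\phi'(t)|=O(t^{1/(p-1)})\to 0$ as $t\downarrow 0$. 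Hence $\phi$ extends Lipschitz to $[0,r]$ with Lipschitz constant $L\coloneqq \|\phi'\|_{L^\infty(0,r)}<\infty$.

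For $\eps\in(0,r)$, define the truncation $\phi_\eps\coloneqq (\phi-\phi(r-\eps))^+$, extended by zero on $[r,+\infty)$. Since $\phi$ is non-increasing, $\phi_\eps$ is Lipschitz on $[0,+\infty)$, vanishes on $[r-\eps,+\infty)$, and equals $\phi-\phi(r-\eps)$ on $[0,r-\eps]$. Consequently, $u_\eps\coloneqq \phi_\eps\circ\sfd_x$ is Lipschitz on $\X$ with ${\rm supp}(u_\eps)\subseteq \overline{B_{r-\eps}(x)}$, which is compact by properness of $(\X,\sfd)$ (cf.\ \cite[Thm.\ 5.3]{Ket17}) and contained in $B$. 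Thus $u_\eps\in{\rm Lip}_{bs}(B)\subseteq W^{1,p}_0(B)$, and by the standard chain rule for $p$-weak upper gradients (see \cite{AmbrosioGigliSavare11-3}), noting that $|D\sfd_x|_p\le 1$ since $\sfd_x$ is $1$-Lipschitz, we obtain
\[
|Du_\eps|_p \le |\phi_\eps'|\circ\sfd_x \le |\phi'|\circ\sfd_x\cdot \nchi_{\{\sfd_x\le r-\eps\}}, \qquad \mm\text{-a.e.\ on }B.
\]

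It remains to pass to the limit $\eps\downarrow 0$. Continuity of $\phi$ at $r$ with $\phi(r)=0$ gives $\phi_\eps\to\phi$ uniformly on $[0,r]$, whence $u_\eps\to u$ uniformly on $B$ and in $L^p(\mm)$, as $\mm(B)<\infty$. For $\eps'<\eps$, the computation of $(\phi_\eps-\phi_{\eps'})'$ shows $|D(u_\eps-u_{\eps'})|_p\le L\,\nchi_{\{r-\eps\le\sfd_x\le r-\eps'\}}$, which tends to $0$ in $L^p(\mm)$ by dominated convergence (the annular measure vanishes as $\eps,\eps'\downarrow 0$). Hence $\{u_\eps\}$ is Cauchy in $W^{1,p}(B)$ with limit $u$, so $\phi\circ\sfd_x\in W^{1,p}_0(B)$. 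Finally, \eqref{eq:chain rule eigenf} follows by passing to an a.e.\ subsequential limit in the pointwise bound above, along a subsequence extracted from the $W^{1,p}$-convergence of the gradients. The main obstacle is the Lipschitz extension of $\phi$ at the degenerate endpoint $t=0$; this is resolved by the ODE asymptotic analysis carried out in the first step, which in particular provides the uniform constant $L$ needed for the dominated-convergence argument at the limit.
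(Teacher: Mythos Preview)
Your proof is correct and follows essentially the same truncation strategy as the paper, which uses $\phi_\eps = (\phi - \eps)\vee 0$ in place of your $(\phi - \phi(r-\eps))^+$ and establishes the chain rule bound \eqref{eq:chain rule eigenf} directly for $u$ before turning to the $W^{1,p}_0$ approximation. One minor point: the lemma is stated for a general metric measure space, so properness cannot be invoked via \cite{Ket17}; however, this is harmless since membership in ${\rm Lip}_{bs}(B)$ requires only bounded (not compact) support, which your $u_\eps$ clearly has.
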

\begin{proof}
By locality of the minimal $p$-weak upper gradient, the conclusion \eqref{eq:chain rule eigenf} is well-posed. Then, notice that $u\coloneqq \phi\circ \sfd_x$ is a composition of Lipschitz functions with $u,|Du|_p \in L^p(B)$. Thus $u \in W^{1,p}(B)$ and \eqref{eq:chain rule eigenf} holds by the chain rule for Sobolev functions (\cite{Gigli12}) (for this claim, local absolutely continuity of $\phi$ would be enough, see \cite[Lemma 4.5]{NobiliViolo24_PZ}).  We now prove that $u \in W^{1,p}_{0}(B)$.  For every $\varepsilon>0$, we define $\phi_\varepsilon\coloneqq  (\phi-\varepsilon )\vee 0$. By a direct computation, we have 
    \begin{equation*}
    |\phi_\varepsilon(t)-\phi(t)|=\varepsilon \nchi_{\{ \phi \ge \varepsilon \}}(t) + |\phi(t)|\nchi_{\{ \phi < \varepsilon \}}(t) \le \varepsilon
    \end{equation*}
    for all $t \in (0,r)$ and 
    \begin{equation*}
    |\phi_\varepsilon'(t)-\phi'(t)|=|\phi'(t)|\nchi_{\{ 0 < \phi <\varepsilon \}},
    \end{equation*}
    for a.e.\ $t \in (0,r)$. We define $u_\varepsilon\coloneqq  \phi_\eps \circ \sfd_x$ on $B$ that satisfies $u_\varepsilon \in  {\rm Lip}_{bs}(B)$ by construction for every $\eps>0$ small enough. Moreover, we have that $\|u-u_\varepsilon\|^p_{L^p(B)} \le \int_B \left| \phi-\phi_\varepsilon \right|^p \circ \sfd_x \,\d \mm \le \varepsilon^p \mm(B)$ that tends to $0$ as $\varepsilon \to 0$. Next, observe that  
    \begin{align*}
    \| |D (u - u_\varepsilon)|_p \|^p_{L^p(B)} &\le \int_B |D((\phi-\phi_\varepsilon)\circ \sfd_x)|_p^p\,\d\mm \overset{\eqref{eq:chain rule eigenf}}{\le} \int_B |\phi'-\phi_\varepsilon'|^p\circ \sfd_x\,\d \mm \\
    &=\int_{\{ 0 < \phi \circ \sfd_x < \varepsilon \}\cap B} |\phi'|^p \circ \sfd_x\,\d \mm \le {\rm Lip}(\phi)^p\,\mm(\{ 0 < \phi < \varepsilon \} \cap B),
    \end{align*}
    that tends to $0$ as $\varepsilon\to 0$. Thus also $u \in W^{1,p}_0(B)$ concluding the proof.
\end{proof}
\noindent\textbf{Acknowledgments}.
E.C. acknowledges the support by the European Union's Horizon 2020 research and innovation programme (Grant agreement No. 948021). F.N is a member of INDAM-GNAMPA and acknowledges support by the European Union (ERC, ConFine, 101078057) and the MIUR Excellence Department Project
awarded to the Department of Mathematics, University of Pisa, CUP I57G22000700001. We thank C. Ketterer and I. Y. Violo for fruitful discussions around the topic of this paper. Part of this work was carried out while all the authors were visitors at the Centro di Ricerca Matematica Ennio De Giorgi under the 2025 Research in Pairs program. The warm hospitality and stimulating atmosphere are gratefully acknowledged.


\end{document}